\definecolor{black}{rgb}{0,0,0}
\definecolor{red}{rgb}{1,0,0}
\newcommand\red[1]{\textcolor{red}{#1}}
\definecolor{blue}{rgb}{0,0,1}
\newcommand\cyr
	\renewcommand\rmdefault{wncyr}
	\renewcommand\sfdefault{wncyss}
	\renewcommand\encodingdefault{OT2}
\DeclareTextFontCommand{\textcyr}{\cyr}
\newcommand{\vertiii}[1]{{\left\vert\kern-0.25ex\left\vert\kern-0.25ex\left\vert #1
    \right\vert\kern-0.25ex\right\vert\kern-0.25ex\right\vert}}
\def\XXint#1#2#3{{\setbox0=\hbox{$#1{#2#3}{\int}$ }
		\vcenter{\hbox{$#2#3$ }}\kern-.6\wd0}}
\def\Dbe{{\partial^{\alpha}_t}}
\def\Dbel{{\partial^\alpha_{\mathcal{N}}}}
\def\Dbela{{\bar{\partial}^\alpha_{\tau_c,\mathcal{N}}}}
\def\Dbehaxx{{\bar{\partial}^\alpha_{\tau_f,\mathcal{F}}}}
\def\Dbelaxx{{\bar{\partial}^\alpha_{\tau_f,\mathcal{N}}}}
\def\Dbeh{{\partial^\alpha_{\mathcal{F}}}}
\def\Dap{{\bar{\partial}^\alpha_{\tau_c}}}
\newcommand{\nexp}{N_{\mathrm{exp}}}
\newcommand{\etamaxmin}[1]{\eta_{\text{#1}}}
\newcommand{\dx}{\,\mathrm{d}x}
\newcommand{\prnt}[1]{\left( #1 \right)}
\newcommand{\norm}[1]{\left\|#1\right\|}
\newcommand{\normE}[2]{\norm{#1}_{H^{1}_{\kappa}\prnt{#2}}}
\newcommand{\normL}[2]{\norm{#1}_{#2}}
\newcommand{\normLii}[2]{\norm{#1}_{L^2_{\widetilde{\kappa}^{-1}}\prnt{#2}}}
\newcommand{\normI}[2]{\norm{#1}_{L^\infty\prnt{#2}}}
\newcommand{\Const}[1]{{\rm C}_{\mathrm{#1}}}
\newtheorem{theorem}{Theorem}[section]
\newtheorem{corollary}{Corollary}[section]
\newtheorem{assumption}{Assumption}[section]
\newtheorem{remark}{Remark}[section]
\newtheorem{lemma}{Lemma}[section]
\newtheorem{definition}{Definition}[section]
\numberwithin{equation}{section}
\title{Wavelet-based Edge Multiscale Parareal Algorithm for subdiffusion equations with heterogeneous coefficients in a large time domain}
\author{Guanglian Li\thanks{Department of Mathematics, The University of Hong Kong, Pokfulam Road, Hong Kong.  (\texttt{lotusli@maths.hku.hk}, \texttt{lotusli@hku.hk}). G.L. acknowledges support from Newton International Fellowships Alumni following-on funding awarded by The Royal Society, Young Scientists fund (Project number: 12101520) by NSFC, China, and Early Career Scheme (Project number: 27301921), RGC, Hong Kong.}
}
\begin{document}
	\maketitle
	\begin{abstract}
We present the Wavelet-based Edge Multiscale Parareal (WEMP) Algorithm, recently proposed in [Li and Hu, {\it J. Comput. Phys.}, 2021], for efficiently solving subdiffusion equations with heterogeneous coefficients in long time. This algorithm combines the benefits of multiscale methods, which can handle heterogeneity in the spatial domain, and the strength of parareal algorithms for speeding up time evolution problems when sufficient processors are available. Our algorithm overcomes the challenge posed by the nonlocality of the fractional derivative in previous parabolic problem work by constructing an auxiliary problem on each coarse temporal subdomain to completely uncouple the temporal variable. We prove the approximation properties of the correction operator and derive a new summation of exponential to generate a single-step time stepping scheme, with the number of terms of $\mathcal{O}(|\log{\tau_f}|^2)$ independent of the final time, where $\tau_f$ is the fine-scale time step size. We establish the convergence rate of our algorithm in terms of the mesh size in the spatial domain, the level parameter used in the multiscale method, the coarse-scale time step size, and the fine-scale time step size. Finally, we present several numerical tests that demonstrate the effectiveness of our algorithm and validate our theoretical results.	
\end{abstract}
\noindent{\bf Keywords:}
multiscale, heterogeneous, long time, edge, wavelets, parareal, time-fractional, diffusion
	\section{Introduction}
	We consider in this paper a new efficient multiscale parareal algorithm for time-fractional diffusion problems with heterogeneous coefficients, which reads to find $u\in V:=H^1_0(D)$, satisfying
\begin{equation}\label{eqn:pde}
\left\{
\begin{aligned}
\Dbe u & =\nabla\cdot \left(\kappa\nabla u \right)+f&& \text{in } D,
\quad t\in \left( 0,T\right] \\
u& =0 && \text{on }\partial D, \;t\in \left( 0,T\right] \\
u(\cdot,0) & =u_0  && \text{in } D.
\end{aligned}
\right .
\end{equation}
Here, $\Dbe u$ ($\alpha\in (0,1)$) refers to the left-sided Caputo fractional derivative of order $\alpha$ of the function $u(\cdot, t)$, defined by (see, e.g. \cite[p. 91, (2.4.1)]{KilbasSrivastavaTrujillo:2006} or \cite[p. 78]{Podlubnybook}),
\begin{equation}\label{eq:frac}
  \Dbe u (\cdot,t) = \frac{1}{\Gamma(1-\alpha)}\int_0^t \frac{1}{(t-s)^\alpha}\frac{\partial u}{\partial s}(\cdot,s)\, \mathrm{ds}
\end{equation}
with $\Gamma(\cdot)$ being Euler's Gamma function: $\Gamma(x)=\int_{0}^{\infty}s^{x-1}\mathrm{e}^{-s}\mathrm{d}s$ for $x>0$.

Furthermore, $D\subset\mathbb{R}^d$ for $d=2,3$ is bounded with certain smoothness and $\kappa$ is piecewise smooth as described in Assumption \ref{ass:coeff}. The force term $f\in W^{2,\infty}([0,T]; L^2(D))$, the initial data $u_0\in \dot{H}^{2\sigma}(D)\cap H^1_0(D)$ with $\sigma\in (0,1]$. The subspace $\dot{H}^{2\sigma}(D)\subset L^2(D)$ is a Hilbert space to be defined in Section \ref{section:problem setting}. To simplify the notation, let $I:=[0,T]$. The main challenges for solving Problem \eqref{eqn:pde} numerically lie in: 1) the existence of multiple scales in the coefficient $\kappa$, since resolving the problem to the finest scale as mandatory for classical numerical methods would incur huge number of degrees of freedom in the spatial domain. 2) the nonlocality of the fractional derivative $\Dbe$ \eqref{eq:frac} makes the numerical computation and storage extremely costly especially for the case of long time simulation with a large final time $T$.

On the one hand, the multiscale nature of the solution in the spatial domain as a consequence of the heterogeneous coefficient $\kappa$ makes the computation challenging and deteriorates the storage cost. Nevertheless, the accurate description of many important applications, e.g., composite materials, porous media, ground water modeling and reservoir simulation, inevitably involves many heterogeneous parameters with multiple scales. They can have both multiple inseparable scales and high-contrast in order to reflect the nature of the practical applications. Because of this, the classical numerical treatment becomes prohibitively expensive especially for the unsteady problem \eqref{eqn:pde}. Consequently, various multiscale model reduction techniques have been proposed in the literature over the past few decades. Among them are multiscale finite element methods (MsFEMs), heterogeneous multiscale methods (HMMs), variational multiscale methods, flux norm approach, generalized multiscale finite element methods (GMsFEMs) and localized orthogonal decomposition (LOD) \cite{ altmann2021numerical,MR2721592,MR1979846, egh12,MR1455261,MR1660141, li2017error, MR3246801}. Wavelet-based Edge Multiscale Finite Element Methods (WEMsFEMs) was proposed recently by the author, c.f. Algorithm \ref{algorithm:wavelet}, which facilitates deriving a rigorous convergence rate with merely mild assumptions \cite{fu2018,fu2019wavelet, li2019convergence}. This method utilizes wavelets as the basis functions over the coarse edges to define local multiscale basis functions, which facilitates transforming the approximate rate over the edges to the convergence rate in each local region. Then the Partition of Unity Method (PUM) \cite{Babuska2} is applied to derive the global convergence rate. The motivation for using wavelets as the ansatz space is that the existence of heterogeneity causes the solution has a low regularity, and wavelets are known to be efficient in approximating functions with such unfavorable property.
	
One the other hand, the nonlocality of the fractional derivative makes long-time simulation prohibitive, which are essential in many applications such as the rare event sampling and molecular dynamics simulation. The L1 scheme is one of the well known time stepping schemes that uses piecewise linear approximation on each time step. High order schemes have been designed and analyzed in the literature to obtain a higher order convergence rate when the solution has sufficient temporal regularity. Furthermore, these time stepping schemes can be combined with approximation by exponential sums or the proper orthogonal decomposition method in the history part to reduce the storage costs \cite{MR3601002,WU2018135,zhu2019fast}. The parareal algorithm is one of the most popular and success algorithms for long time simulation, and it has been applied to many problems including the financial mathematics and quantum chemistry \cite{Bal_Mayday_AmericanPut,parareal_ChemicalKinetics2010, time_decomposed_parallel2003, time_parallel2006, parareal_NS_mayday,  parareal_ocean_model2008,parareal_ModelReduction_mayday2007,  parareal_QuantumSystem_mayday2007}. The parareal algorithm facilitates speeding up the numerical simulation of time evolution problems when sufficient processors are available \cite{bal2005convergence} by using an iterative solver based on a cheap inaccurate sequential coarse-scale time stepping scheme and expensive accurate fine-scale time stepping scheme that can be performed in parallel. This algorithm was introduced by Lions, Mayday and Turinici \cite{lions_mayday_2001_parareal}. Coupling of parareal algorithm and some other techniques has been developed in many literature, see \cite{2016PararealMultiscale,  parareal_integrator_2010,parareal_multigrid_2014, 2013Micro_Macro_parareal}. Among them is  the coupling of parareal algorithm and model reduction techniques \cite{2013Micro_Macro_parareal}, wherein a micro-macro parareal algorithm for the time-parallel integration of multiscale-in-time systems is introduced to solve singularly perturbed ordinary differential equations. A new coupling strategy for the parareal algorithm with multiscale integrators is introduced in \cite{2016PararealMultiscale}.
	
As a continuation of \cite{li2020wavelet}, we incorporate the parareal algorithm into WEMsFEM to numerically calculate the time evolution problems efficiently. The main contribution of this paper is three-fold. Firstly, we derive a new model reduction in the temporal domain by means of summation of exponentials to handle long time situation with the number of terms independent of the final time. Secondly, the stability of the resulting time stepping scheme is rigorously analyzed, with a very mild requirement on the accuracy of the approximation by summation of exponentials. Thirdly, together with the multiscale model reduction in the spatial domain, we obtain a stable iterative scheme with a very fast convergence rate. Furthermore, the convergence analysis is rigorously justified.

Our main algorithm is as follows. We first construct a multiscale space $V_{\text{ms},\ell}$ based on WEMsFEM with $\ell$ as the wavelets level parameter. Then we design a new parareal algorithm that corrects the current solution only and treats the history information as a parameter.
This parameter is updated once by a formula after obtaining a corrected current solution.  In the meanwhile, we will use $V_{\text{ms},\ell}$ as the ansatz space to save computation and storage costs. Note that two parareal algorithms were proposed in \cite{WU2018135}, both of which involved correction operator for the history information. On the one hand, our numerical experiments demonstrate much faster convergence for our new parareal algorithm. On the other hand, our new algorithm saves more computational costs, and we can derive an explicit convergence rate.
The convergence rate of this algorithm is presented in Theorem \ref{prop:wavelet-based}. We proved for any $\sigma\in (0,1)$,
	\begin{equation*}
		\begin{aligned}
		\normL{u(\cdot,T^n)-U_{k}^{n}}{D}
		&\lesssim \left({\tau_f}^{\sigma\alpha}+\eta(H,\ell)|\log \eta(H,\ell)|\right)\|f\|_{W^{2,\infty}(0,T;L^2(D))}\\
&+\left({\tau_f}^{\sigma\alpha}+\eta(H,\ell)|\log \eta(H,\ell)|(T^n)^{-\alpha(1-\sigma)}+\Pi_{j=0}^{k}\Big(\frac{\tau_c}{T^{n-j}}\Big)^{\alpha\sigma} \right)|u_0|_{2\sigma}.
		\end{aligned}
		\end{equation*}
where $u(\cdot,T^n)$ and $U_{k}^{n}$ are the exact solution at $T^n=n\times\tau_c$ for $n=2,\cdots$ and our proposed WEMP algorithm at iteration $k$. The notations $\tau_c$ and $\tau_f$ represent for the coarse time step size and fine time step size, respectively. $H$, $\ell$ and $k$ are the space domain mesh size, the level parameter and iteration number. The parameter $\eta(H,\ell)$ is defined in \eqref{eq:etaHl} that reflects the approximation property of the multiscale ansatz space $V_{\text{ms},\ell}$. This implies that taking $\ell=\mathcal{O}(|\log H|)$ and $k=\mathcal{O}(|\log\tau_f/\log\tau_c|)$, we obtain $\mathcal{O}(H^2+\tau_f^{\sigma\alpha})$ error.

To demonstrate the performance of our proposed algorithm we present several numerical tests to first investigate the influence of approximation by exponential sums on the accuracy of our method. Then a series of simulations are made to show the convergence of our method. Finally, we present several numerical results with large final time $T$.
	
Our paper is structured as follows. In Section \ref{section:problem setting}, we introduce the Galerkin-L1-scheme to Problem \eqref{eqn:pde}, and provide an overview of multiscale model reduction in the spatial domain and the parareal algorithm. In Section \ref{sec:multiscale}, we discuss the construction of the multiscale ansatz space $V_{\text{ms},\ell}$ by WEMsFEMs based upon \cite{fu2018,fu2019wavelet, li2019convergence} and recap its approximation properties. Section \ref{sec:history} is devoted to deriving an efficient numerical scheme for the history information in the fractional derivative based on the Sinc approximation, which facilitates designing a new multiscale-SOE-scheme to solve Problem \eqref{eqn:pde}. The main ingredient of our proposed algorithm is presented in Section \ref{sec:WEMP} and its convergence rate is derived in Section \ref{sec:convergence}. Extensive numerical tests are presented in Section \ref{sec:num}. Finally, we complete our paper with concluding remarks in Section \ref{sec:conclusion}.
	
\section{Problem setting} \label{section:problem setting}
We first present in Section \ref{subsec:gl1} the full discretization of problem \eqref{eqn:pde} by the standard conforming Galerkin piecewise linear element in the spatial domain under a tiny fitted mesh $\mathcal{T}_h$ of mesh size $h\ll 1$, and L1 scheme in the temporal domain using a uniform discretization of mesh size $\tau_f\ll 1$. Then we propose, on the one hand, to replace this expensive fitted mesh $\mathcal{T}_h$ with a much cheaper unfitted mesh $\mathcal{T}_H$ with mesh size $H\gg h$ by construction of multiscale ansatz space in Section \ref{subsec:multiscale}, and on the other hand, to further reduce the computational complexity by building iterative time stepping scheme over the coarse subintervals based on the parareal algorithm in Section \ref{subsec:parareal}.

To start with, we define the Hilbert space $\dot{H}^s(D)$, which is analogous to \cite[Chapter 3]{thomee1984galerkin}. Let $\{(\lambda_m,\phi_m)\}_{m=1}^{\infty}$ be the eigenpairs of the following eigenvalue problems with the eigenvalues arranged in a nondecreasing order,
\begin{align*}
\mathcal{L}\phi_m&:=-\nabla\cdot(\kappa\nabla\phi_m)=\lambda_m \phi_m&& \text{ in } D\\
\phi_m&=0 &&\text{ on }\partial D.
\end{align*}
Note that the eigenfunctions $\{\phi_m\}_{m=1}^{\infty}$ form an orthonormal basis in $L^2(D)$, and consequently, each $v\in L^2(D)$ admits the representation
$v=\sum_{m=1}^{\infty}(v,\phi_m)_D\phi_m$ with $(\cdot,\cdot)_D$ being the inner product in $L^2(D)$.
The associated norm in $L^2(D)$ is denoted as $\|\cdot\|_D$.

The Hilbert space $\dot{H}^s(D)\subset L^2(D)$ is defined by
\begin{align}\label{def:dotH}
\dot{H}^s(D)=\{v\in L^2(D): \sum_{m=1}^{\infty}\lambda_m^s|(v,\phi_m)_D|^2<\infty\}
\end{align}
with its associated norm being $|v|_s:=\Big( \sum_{m=1}^{\infty}\lambda_m^s|(v,\phi_m)_D|^2\Big)^{1/2}$ for all $v\in \dot{H}^s(D)$.
\subsection{Galerkin-L1-scheme}\label{subsec:gl1}
To discretize problem \eqref{eqn:pde}, we first introduce fine and coarse discretization in the spatial domain $D$. Let $\mathcal{T}_H$ be a regular partition of the domain $D$ into finite elements (triangles, quadrilaterals, tetrahedral, etc.) with a mesh size $H$, which is determined by the desired accuracy of our numerical method. We refer to this partition and its elements as coarse mesh and the coarse elements. Then each coarse element is further partitioned into a union of connected fitted fine elements. This fine-scale partition is denoted by $\mathcal{T}_h$ with $h$ being its mesh size. Let $\mathcal{F}_h$ (or $\mathcal{F}_H$) be the collection of all edges in $\mathcal{T}_h$ (or $\mathcal{T}_H$). Over the fine mesh $\mathcal{T}_h$, let $V_h$ be the conforming piecewise linear finite element space:
\[
V_h:=\{v\in V: V|_{E}\in \mathcal{P}_{1}(E) \text{ for all } E\in \mathcal{T}_h\},
\]
where $\mathcal{P}_1(E)$ denotes the space of linear polynomials on the fine element $E\in\mathcal{T}_h$.

The time interval $I:=[0,T]$ is decomposed into a sequence of uniform coarse subintervals $[T^n,T^{n+1}]$ for $n=0,1,\cdots, M_{c}$ of size $\tau_c$ with $\tau_c:=T/M_{c}$ for some $M_{c}\in \mathbb{N}_{+}$ and $T^0:=0$. The total number of coarse steps $M_c$ is governed by the available number of processors and the aimed speedup. Each coarse time interval $[T^{n}, T^{n+1}]$ is further uniformly discretized with a fine time step $\tau_f$ controlled by the expected accuracy in the temporal discretization.
Let $t_n=n\times\tau_f$ for $n=0,1,\cdots,M_{f}$ with $M_{f}:=T\times\tau_f^{-1}$. Note that we assume $\tau_c\gg \tau_f$ throughout of this paper.

The $L1$ scheme \cite{LIN20071533, SUN2006193} is one of the most popular and simplest discretization schemes for \eqref{eq:frac} the temporal domain $I$, which is based on the idea of using piecewise constant approximation to approximate the first derivative. The scheme reads
  \begin{align}\label{eq:l1scheme}
  \tilde{\partial}_{\tau_f}^{\alpha}v(x,t_{n+1})
  &=\sum_{j=0}^{n}b_j\frac{v(x,t_{n+1-j})-v(x,t_{n-j})}{\tau_f^{\alpha}c_{\alpha}}
  \end{align}
with $c_{\alpha}=\Gamma(2-\alpha)$ and $b_j=(j+1)^{1-\alpha}-j^{1-\alpha}$ for $j=0,1,\cdots,n$.

The next lemma provides a lower bound of the coefficient for the L1 scheme:
\begin{lemma}
The coefficient from \eqref{eq:l1scheme} has the following lower bound,
\begin{align*}
b_{j-1}-b_j\geq \alpha(1-\alpha)(n+1)^{-\alpha-1} \text{ for all } j=1,\cdots,n.
\end{align*}
\end{lemma}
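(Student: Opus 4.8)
The plan is to recognize $b_{j-1}-b_j$ as the negative of a centered second difference of the concave function $g(x):=x^{1-\alpha}$, and then to convert that finite difference into an integral of $g''$, where the target factor $\alpha(1-\alpha)$ appears explicitly. Since $b_j=(j+1)^{1-\alpha}-j^{1-\alpha}=g(j+1)-g(j)$, I would first write
\[
b_{j-1}-b_j = \bigl(g(j)-g(j-1)\bigr)-\bigl(g(j+1)-g(j)\bigr) = -\bigl(g(j+1)-2g(j)+g(j-1)\bigr).
\]

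Next I would represent the centered second difference exactly by integrating the second derivative twice. Using $g(j+1)-g(j)=\int_0^1 g'(j+s)\,\mathrm{d}s$ together with $g(j)-g(j-1)=\int_0^1 g'(j-1+s)\,\mathrm{d}s$, subtracting yields
\[
g(j+1)-2g(j)+g(j-1)=\int_0^1\!\!\int_0^1 g''(j-1+s+t)\,\mathrm{d}t\,\mathrm{d}s.
\]
A direct computation gives $g''(x)=-\alpha(1-\alpha)x^{-\alpha-1}<0$ for $x>0$, so that
\[
b_{j-1}-b_j=\alpha(1-\alpha)\int_0^1\!\!\int_0^1 (j-1+s+t)^{-\alpha-1}\,\mathrm{d}t\,\mathrm{d}s.
\]
This representation already exhibits positivity, and it remains only to bound the integrand from below.

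The final step is to estimate the argument of the power. For $s,t\in(0,1)$ and $j\le n$ one has $j-1+s+t< j+1\le n+1$, and since $x\mapsto x^{-\alpha-1}$ is decreasing on $(0,\infty)$, it follows that $(j-1+s+t)^{-\alpha-1}>(n+1)^{-\alpha-1}$ throughout the unit square. Inserting this bound into the integral, whose domain has measure one, produces $b_{j-1}-b_j>\alpha(1-\alpha)(n+1)^{-\alpha-1}$, which yields the claimed inequality.

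The only point requiring care is the direction of the estimate: because the exponent $-\alpha-1$ is negative, replacing the argument by its maximal admissible value $n+1$ gives the \emph{smallest} value of the power, which is exactly what delivers a lower bound rather than an upper one. An equivalent route replaces the double-integral representation by Taylor's theorem with Lagrange remainder at the two subintervals $(j-1,j)$ and $(j,j+1)$, bounding both intermediate points by $n+1$ in the same fashion. I do not anticipate any serious obstacle beyond this monotonicity bookkeeping, since every quantity involved is explicit.
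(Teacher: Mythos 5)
Your proof is correct, but it proceeds differently from the paper's. The paper sets $r(x):=(x+1)^{1-\alpha}-x^{1-\alpha}$ (so $b_j=r(j)$), first uses convexity of $r$ to reduce to the worst case $j=n$ via $b_{j-1}-b_j\geq b_{n-1}-b_n$, and then applies two successive tangent-line estimates: $b_{n-1}-b_n\geq -r'(n)=(1-\alpha)\bigl(n^{-\alpha}-(n+1)^{-\alpha}\bigr)$ followed by convexity of $x^{-\alpha}$ to reach $\alpha(1-\alpha)(n+1)^{-\alpha-1}$. You instead write $b_{j-1}-b_j$ as an exact double integral of $-g''$ with $g(x)=x^{1-\alpha}$ and bound the integrand uniformly by its value at the largest admissible argument $n+1$. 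Your route is arguably more transparent: the constant $\alpha(1-\alpha)$ appears in one stroke from $g''$, the positivity is manifest, and you even obtain a strict inequality; the paper's route avoids any integral representation and stays entirely at the level of finite differences and derivatives. The only point you should flag explicitly is the case $j=1$, where $g'$ and $g''$ blow up at the origin, so the representations $g(1)-g(0)=\int_0^1 g'(t)\,\mathrm{d}t$ and the ensuing double integral are improper; they do converge (the singularity $(s+t)^{-\alpha-1}$ is integrable over the unit square since $\alpha<1$, e.g.\ by passing to polar coordinates), but this deserves a sentence. With that remark added, your argument is complete.
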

\begin{proof}
Let $r(x):=(x+1)^{1-\alpha}-x^{1-\alpha}$ for $x>0$. Then $r(x)$ is decreasing and convex. Consequently,
\begin{align*}
b_{j-1}-b_{j}\geq b_{n-1}-b_{n}\geq -r'(n)=(1-\alpha)\left(n^{-\alpha}-(n+1)^{-\alpha}\right).
\end{align*}
Note that $r_1(x)=x^{-\alpha}$ for $x>0$ is decreasing and convex, this yields, 
 \begin{align*}
b_{j-1}-b_{j}\geq 
(1-\alpha)\left(n^{-\alpha}-(n+1)^{-\alpha}\right)
\geq\alpha(1-\alpha)(n+1)^{-\alpha-1}.
\end{align*}
We have proved the desired assertion.
\end{proof}
 We use conforming Galerkin method for the discretization in the spatial domain $D$ throughout this paper. Then the Galerkin-L1-scheme reads to find $u_h^{n}\in V_h$ for $n=1,2,\cdots,M_{f}$, satisfying
	\begin{equation}\label{eqn:weakform_h}
	\left\{
	\begin{aligned}
	(\tilde{\partial}_{\tau_f}^{\alpha}u^n_h,v_h)_D+a(u_h^{n},v_h)&=(f(t_n),v_h)_{D} \quad \text{ for all } v_h\in V_h\\
	u_h^0&=I_h u_0.
	\end{aligned}
	\right.
	\end{equation}
	Here, the bilinear form $a(\cdot,\cdot)$ on $V\times V$ is defined by
	\[
	a(v_1,v_2):=\int_{D}\kappa \nabla v_1\cdot \nabla v_2\dx \text{ for all } v_1,v_2\in V
	\]
with $I_h$ being the $L^2(D)$-projection from $V$ to $V_h$. Furthermore, we define the energy norm $\|v\|_{H^1_{\kappa}(D)}:=\sqrt{a(v,v)}$ for all $v\in V$.
	
The fine-scale solution $u_h^{n}$ will serve as a reference solution in Section \ref{sec:num}. Note that due to the presence of multiple scales in the coefficient $\kappa$, the fine-scale mesh size $h$ should be commensurate with the smallest scale and thus it can be very small in order to obtain an accurate solution for each time step. In addition, the well-known limited spatial smoothness property of Problem \eqref{eqn:pde} makes high-order methods in the spatial domain infeasible \cite{Sakamoto-Yamamoto11}. These result in huge computational complexity, and consequently, more efficient methods are in great demand. 
Moreover, the storage cost of order $\mathcal{O}(M_{f} h^{-d})$ associated with the L1 scheme \eqref{eq:l1scheme} becomes tremendous under such scenario. These make a direct solver for $u_h^{n}$ prohibitive.
	
\subsection{Multiscale-L1-scheme}\label{subsec:multiscale}		
Multiscale model reduction methods aim at replacing the fine-scale mesh $\mathcal{T}_h$ with a coarse-scale mesh $\mathcal{T}_{H}$ when solving Problem \eqref{eqn:weakform_h} numerically, which, meanwhile, maintains a certain accuracy. To describe it, we need a few notations. The vertices of $\mathcal{T}_H$ are denoted by $\{O_i\}_{i=1}^{N}$, with $N$ being the total number of coarse nodes. The coarse neighborhood associated with the node $O_i$ is denoted by
\begin{equation} \label{neighborhood}
\omega_i:=\bigcup\{ K\in\mathcal{T}_H: ~ O_i\in \overline{K}\}.
\end{equation}
	We refer to Figure~\ref{schematic} for an illustration of neighborhoods and elements subordinated to the coarse
	discretization $\mathcal{T}_H$. Throughout, we use $\omega_i$ to denote a coarse neighborhood. Furthermore, Let $\mathcal{F}_h$ (or $\mathcal{F}_H$) be the collection of all edges in $\mathcal{T}_h$ (or $\mathcal{T}_H$) and let $\mathcal{F}_h(\partial \omega_i)$ (or $\mathcal{F}_H(\partial \omega_i)$) be the restriction of $\mathcal{F}_h$ on $\partial\omega_i$ (or $\mathcal{F}_H$ on $\partial\omega_i$).
	\begin{figure}[htb]
		\centering
		\includegraphics[width=0.65\textwidth]{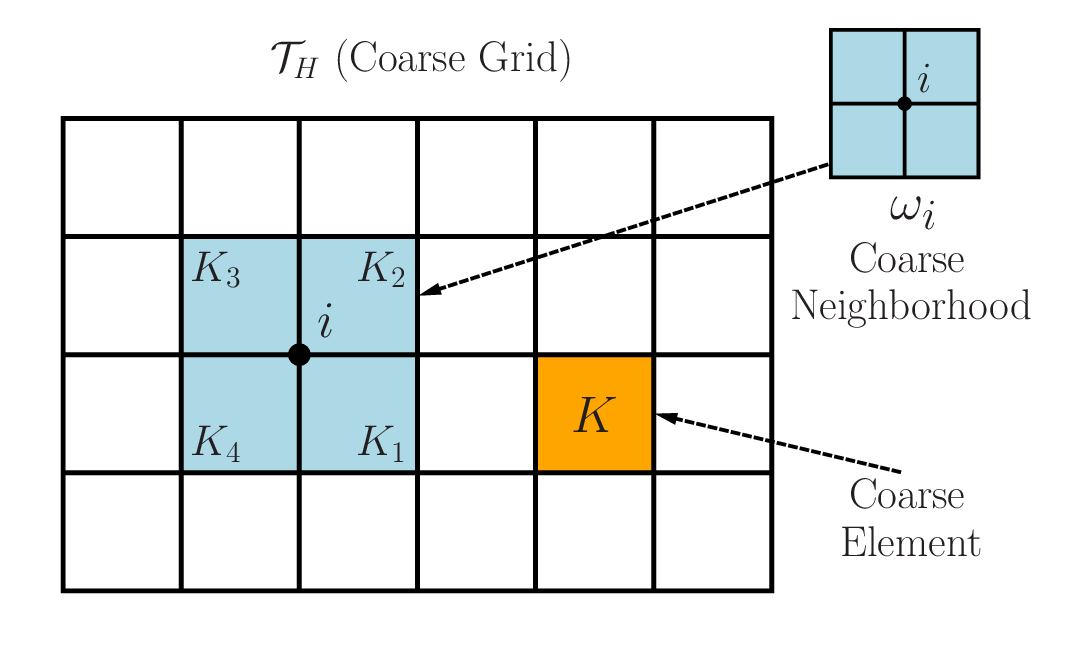}
		\caption{Illustration of a coarse neighborhood and coarse element.}
		\label{schematic}
	\end{figure}

Next, the multiscale ansatz space $V_{\text{ms}}\subset V_h$ is constructed that are locally supported on certain coarse neighborhood $\omega_i$. Then the multiscale-L1-scheme aims at seeking $u_{\text{ms}}^{n}\in V_{\text{ms}}$ for $n=1,\cdots,M_{f}$, s.t.,
\begin{equation}\label{eq:multiscale}
\left\{
\begin{aligned} (\tilde{\partial}_{\tau_f}^{\alpha}u_{\text{ms}}^{n},v_{\text{ms}})_D+a(u_{\text{ms}}^{n},v_{\text{ms}})&=(f(\cdot, t_n),v_{\text{ms}})_{D}
\quad \text{for all} \, \, v_{\text{ms}}\in
V_{\text{ms}}\\
u_{\text{ms}}^0&=I_{\text{ms}} u_0.
\end{aligned}
\right.
\end{equation}
Here, $V_{\text{ms}}$ denotes the multiscale space spanned by these multiscale basis functions and $I_{\text{ms}}$ is a projection operator from $V$ to $V_{\text{ms}}$.

The construction of $V_{\text{ms}}$ will be presented in Section \ref{sec:multiscale}. Note that we need a very tiny fine-scale time step size $\tau_f$ to guarantee a reasonable approximation property of  $u_{\text{ms}}^{n}$ to $u(\cdot,t_n)$ for $n=1,\cdots,M_{f}$ due to, e.g., the singularity of the solution $u(\cdot,t)$ at $t=0$ when the source term $f$ fails to belong to certain smooth functional space. Consequently, the computational complexity and the storage requirement of the multiscale-L1-scheme \eqref{eq:multiscale} can be still extremely expensive. For this reason, we introduce in Section \ref{subsec:parareal} the parareal algorithm to significantly improve its efficiency.

\subsection{Parareal algorithm}\label{subsec:parareal}
The main goal of the parareal algorithm is to obtain a high-order approximation of the solution at the coarse time step $u(\cdot,T^n)$ for $n=1,\cdots,M_c$ iteratively under the coarse time stepping scheme, which is corrected by parallel local fine time stepping scheme over each coarse subinterval. To demonstrate this algorithm, we need to introduce the one-step coarse propagator and fine propagator
\begin{align*}
\mathcal{G}(T^n, U^n) \text{ and }\mathcal{F}(T^n, U^n)
\end{align*}
for given initial data $U^n$ over the coarse interval $[T^n, T^{n+1}]$. The outcome of these propagators is an approximation to $u(\cdot,T^{n+1})$ for $n=0,1,\cdots, M_{c}-1$.

\begin{algorithm}[H]
\caption{Parareal algorithm}
\label{algorithm:parareal}
\KwData{Initial data $u_0$; tolerance $\delta$ and initial solution $U_0^n$ for $n=0,\cdots, M_c$.}
		
\KwResult{$U^n_k$ for $n=0,\cdots, M_c$.}

\begin{algorithmic}[1]
	\State Set $k=1$.
    \Do
\State  Set the initial data $U_k^0=I_{\text{ms}} u_0$.
\State Parallel compute $U^n_k$ on each local time subinterval $[T^{n-1}, T^n]$ for $n=1,\cdots,M_c$,
\begin{align*}
U^n_k=\mathcal{G}(T^{n-1}, U^{n-1}_{k-1}).
\end{align*}
	\State Parallel compute $u^n_k$ on each local time subinterval $[T^{n-1}, T^n]$ for $n=1,\cdots,M_c$,
\begin{align*}
u^n_k=\mathcal{F}(T^{n-1}, U^{n-1}_{k-1}).
\end{align*}
    \State Parallel compute the solution jumps for $n=1,\cdots,M_c$,
\begin{align*}
\mathcal{S}(T^{n-1}, U_{k-1}^{n-1})=u^n_k-U^n_k.
\end{align*}
\State Sequentially compute the corrected coarse solutions by propagation of the jumps
\begin{align*}
U_k^n=\mathcal{S}(T^{n-1}, U_{k-1}^{n-1})+\mathcal{G}(T^{n-1}, U^{n-1}_{k}).
\end{align*}
\State Estimate the error
			\[{\rm err}:=1/{M_c}\sum\limits_{n=1}^{M_{c}}\|U_{k}^{n}-U_{k-1}^{n}\|_{\ell_2}.\]
\State $k\gets k+1$.
\doWhile{$\text{err}>\delta$}
\end{algorithmic}
\end{algorithm}
We present in Algorithm \ref{algorithm:parareal} the main ingredient of this method, which includes a parallel computation of the local jumps in Step 6 and the sequential propagation of the local jumps in Step 7.  One can observe from \cite[Theorem 2.1]{Sakamoto-Yamamoto11} that the solution operator to problem \eqref{eqn:pde} is not a semigroup. Consequently, the fine propagator $\tilde{\partial}_{\tau_f}^{\alpha}$ defined by the L1 scheme \eqref{eq:l1scheme} requires all the information from the previous time steps. This results in a huge computational and storage cost.  To reduce this cost, we will on the one hand, decrease the number of terms in the history part by means of approximation by exponential sums. One the other hand, the length of each term will be reduced by multiscale approximation.

The goal of this paper is to design an efficient numerical algorithm to approximate $u(\cdot,t)$ such that it can be solved
on the coarse spatial mesh $\mathcal{T}_{H}$, without resolving the microscale information in the coefficient $\kappa$ using a rather coarse time step $\tau_c\gg \tau_f$ that can be computational inexpensive. In the meanwhile, the storage cost can be reduced hugely. This is achieved by a combination of the multsicale method and parareal algorithm as proposed in \cite{li2020wavelet} for parabolic problems with heterogeneous coefficients. Compared with parabolic problems, the main difficulty encountered here is that the solution operator of the time fractional diffusion problem is not a semigroup.
This will be overcome in Section \ref{sec:history} by a new approximation by exponential sums.

\section{Multiscale ansatz space}\label{sec:multiscale}
This section is concerned with the construction of the multiscale space by means of the Wavelet-based Edge Multiscale Finite Element Methods (WEMsFEMs) \cite{fu2018,fu2019wavelet, li2019convergence}. The main idea is to use wavelets as the boundary data to construct local multiscale basis functions on each coarse neighborhood $\omega_i$. Then the multiscale ansatz space $V_{\text{ms},\ell}$ is constructed by multiplying these local multiscale basis functions with the partition of unity functions defined over the coarse patches $\{\omega_i\}_{i=1}^N$.

Let $\{ \chi_i \}_{i=1}^{N}$ be the partition of unity functions, which are the standard multiscale basis functions on each coarse element $K\in \mathcal{T}_{H}$ defined by
\begin{alignat}{2} \label{pou}
-\nabla\cdot(\kappa\nabla\chi_i) &= 0  &&\quad\text{ in }\;\;K \\
\chi_i &= g_i &&\quad\text{ on }\partial K. \nonumber
\end{alignat}
Here, $g_i$ is affine over $\partial K$ with $g_i(O_j)=\delta_{ij}$ for all $i,j=1,\cdots, N$. Recall that $\{O_j\}_{j=1}^{N}$ are the set of coarse nodes on $\mathcal{T}_{H}$.	

The construction of the multiscale test space $V_{\text{ms},\ell}$ is presented in Algorithm \ref{algorithm:wavelet}. Given the level parameter $\ell\in \mathbb{N}$, and the type of wavelets on each edge of the coarse neighborhood $\omega_i$, one can obtain the local multiscale space $V_{i,\ell}$ on $\omega_i$ by solving $2^{\ell+2}$  local problems in parallel in Step 2. One local multiscale basis function that is vanishing over $\partial\omega_i$ is calculated in parallel. In Step 4, we can use these local multiscale space to build the global multiscale space $V_{\text{ms},\ell}$ by multiplying the partition of unity functions $\chi_i$. 

\begin{algorithm}[H]
\caption{Wavelet-based Edge Multiscale Finte Element Method (WEMsFEM)}
\label{algorithm:wavelet}
\KwData{ The level parameter $\ell\in \mathbb{N}$; coarse neighborhood $\omega_i$ and its four coarse edges $\Gamma_{i,k}$ with $k=1,2,3,4$, i.e., $\cup_{k=1}^{4}\Gamma_{i,k}=\partial\omega_i$; the subspace $V_{\ell,k}\subset L^2(\Gamma_{i,k})$ up to level $\ell$ on each coarse edge $\Gamma_{i,k}$.}
		
\KwResult{Multiscale space $V_{\text{ms},\ell}$.}
\begin{algorithmic}[1]
\State Define the local edge basis functions. Let $V^{i}_{\ell}:=\oplus_{k=1}^{4}V_{\ell,k}:=\text{span}\{v_k: k=1,\cdots, 2^{\ell+2}\}$. Then the number of basis functions in $V^{i}_{\ell}$ is $4\times 2^{\ell}=2^{\ell+2}$.
\State Calculate local multiscale basis $\mathcal{L}^{-1}_i (v_k)$ for all $k=1,\cdots,2^{\ell+2}$.
Here, $\mathcal{L}^{-1}_i (v_k):=v$ satisfies
\begin{equation*}
\left\{\begin{aligned}
\mathcal{L}_i v&:=-\nabla\cdot(\kappa\nabla v)=0&& \mbox{in }\omega_i\\
v&=v_k&& \mbox{on }\partial\omega_i.
\end{aligned}
\right.
\end{equation*}
\State Solve one local multiscale basis function,
\begin{equation*}
\left\{
\begin{aligned}
-\nabla\cdot(\kappa\nabla v^{i})&=\frac{\widetilde{\kappa}}{\int_{\omega_i}\widetilde{\kappa}\dx} \quad&&\text{ in } \omega_i\\
-\kappa\frac{\partial v^{i}}{\partial n}&=|\partial\omega_i|^{-1}\quad&&\text{ on }\partial \omega_i.
\end{aligned}
\right.
\end{equation*}
\State Build global multiscale space:
$
V_{\text{ms},\ell}  := \text{span} \{\chi_i\mathcal{L}^{-1}_i(v_k),\chi_i v^{i} : \,  \, 1 \leq i \leq N,\,\,\, 1 \leq k \leq  2^{\ell+2}\}.
$
\end{algorithmic}
\end{algorithm}
To analyze this algorithm, we define the weighted coefficient:
\begin{equation}\label{defn:tildeKappa}
\widetilde{\kappa} =H^2 \kappa \sum_{i=1}^{N}  | \nabla \chi_i |^2.
\end{equation}

Furthermore, let $\widetilde{\kappa}^{-1}$ be defined by
\begin{equation}\label{eq:inv-tildeKappa}
\widetilde{\kappa}^{-1}(x):=
\left\{
\begin{aligned}
&\widetilde{\kappa}^{-1}, \quad &&\text{ when } \widetilde{\kappa}(x)\ne 0,\\
&1, \quad &&\text{ otherwise }.
\end{aligned}
\right.
\end{equation}
Next, we recap a few results on the approximation properties of the multiscale ansatz space $V_{\text{ms},\ell}$ which play a key role in the later convergence analysis (Section \ref{subsec:xxxx}). The following assumption on the coefficient $\kappa$ implies that $\kappa$ is piecewise smooth.
\begin{assumption}[Structure of $D$ and $\kappa$]\label{ass:coeff}
		Let $D$ be a domain with a $C^{1,\rho}$ $(0<\rho<1)$ boundary $\partial D$,
		and $\{D_i\}_{i=1}^m\subset D$ be $m$ pairwise disjoint strictly convex open subsets, {each with a $C^{1,\rho}$ boundary
			$\Gamma_i:=\partial D_i$,} and denote $D_0=D\backslash \overline{\cup_{i=1}^{m} D_i}$. 
		Let the permeability coefficient $\kappa$ be piecewise regular function defined by
		\begin{equation}
		\kappa=\left\{
		\begin{aligned}
		&\eta_{i}(x) &\text{ in } D_{i},\\
		&1 &\text{ in }D_0.
		\end{aligned}
		\right.
		\end{equation}
		Here $\eta_i\in C^{\mu}(\bar{D_i})$ with $\mu\in (0,1)$ for $i=1,\cdots,m$. Denote $\etamaxmin{min}:=\min\limits_{i}\{\min\limits_{x\in D_i}\{\eta_i(x)\}\}\geq 1$ and $\etamaxmin{max}:=\max\limits_{i}\{\|\eta_i\|_{C_0(D_i)}\}$. Furthermore, we assume the
diameter of $D$ is 1.
	\end{assumption}
Assumption \ref{ass:coeff}, first proposed in \cite{li2019convergence} then further utilized in \cite{fu2018}, is needed to make the constant appearing in the approximation properties independent of the possible contrast in the coefficient $\kappa$.
	
	In the following, we define the $L^2(\partial\omega_i)$-orthogonal projection onto the local multiscale space up to level $\ell$ by $\mathcal{P}_{i,\ell}: L^2(\partial\omega_i)\to \mathcal{L}_{i}^{-1}(V^{i}_{\ell})$ satisfying
	\begin{align}\label{eq:projectionEDGE}
	\mathcal{P}_{i,\ell}(v):=\sum\limits_{j=1}^{2^{\ell+2}}(v,\psi_{j})_{\partial\omega_i}\mathcal{L}_{i}^{-1}(\psi_{j}) \quad \text{ for all }v\in L^2(\partial\omega_i).
	\end{align}
	Here, we denote $\psi_{j}$ for $j=1,\cdots, 2^{\ell+2}$ as the hierarchical bases defined on the four edges of $\omega_i$ of level $\ell$ and the local operator $\mathcal{L}_i$ is defined as in Algorithm \ref{algorithm:wavelet}.
	
Let $\mathcal{P}_{\ell}$ be the projection onto the multiscale space $V_{\text{ms},\ell}$ given by
	\begin{align}\label{eq:projectionglo}
	\mathcal{P}_{\ell}(v):=\sum_{i=1}^{N}\chi_i\mathcal{P}_{i,\ell}(v) \text{ for all }v\in V.
	\end{align}
Let $\mathcal{L}:=-\nabla\cdot(\kappa\nabla \cdot)$ be the elliptic operator defined on $V$, and let its discrete operator $\mathcal{L}_{\ell}: V_{\text{ms},\ell}\to H^{-1}(D)$ be
\begin{align*}
(\mathcal{L}_{\ell}w_{\ell},v_{\ell} ):=(\mathcal{L}w_{\ell},v_{\ell} )
=a(w_{\ell},v_{\ell} )\text{ for all }v_{\ell}\text{ and }w_{\ell} \in V_{\text{ms},\ell}.
\end{align*}
Then the inverse operator $\mathcal{L}_{\ell}^{-1}$ exists, which is self-adjoint, positive semi-definite on $L^2(D)$, and positive definite on $V_{\text{ms},\ell}$.
Further, let $\mathcal{R}_{\ell}$ be the Riesz operator associated to $\mathcal{L}_{\ell}$ in the multiscale space $V_{\text{ms},\ell}$, i.e.,
\[
\forall v\in V \text{ and }w_{\text{ms}}\in V_{\text{ms},\ell}:a(v-\mathcal{R}_{\ell}v,w_{\text{ms}})=0,
\]
then the following relation holds
\begin{align*}
\mathcal{L}_{\ell}^{-1}=\mathcal{R}_{\ell}\mathcal{L}^{-1}.
\end{align*}

The approximation property of $\mathcal{L}_{\ell}^{-1}$ in energy norm is derived in \cite[Proposition 5.2]{fu2018}. For any $v\in L^2(D)$, it holds
\begin{equation}\label{eq:glo-energy1}
\begin{aligned}
\normE{\mathcal{L}^{-1}v-\mathcal{L}_{\ell}^{-1}v}{D}&\lesssim\eta(H,\ell)
\|v\|_{L^2(D)}
\end{aligned}
\end{equation}
with
\begin{align}\label{eq:etaHl}
\eta(H, \ell):=H\normI{\widetilde{\kappa}}{D}^{1/2}+2^{-\ell/2}\|\kappa\|_{L^{\infty}(\mathcal{F}_H)}.
\end{align}
This, together with Assumption \ref{ass:coeff} and orthogonality of the Riesz operator $\mathcal{R}_{\ell}$, leads to
\begin{equation}\label{eq:glo-energy2}
\begin{aligned}
\normL{\mathcal{L}^{-1}v-\mathcal{L}_{\ell}^{-1}v}{D}&\lesssim \eta(H,\ell)
\normE{\mathcal{L}^{-1}v}{D}.
\end{aligned}
\end{equation}

\section{Model reduction in the history part}\label{sec:history}
Recall that the time fractional operator $\Dbe$ can be split into a summation of the local component and the history component
\begin{align}
\Dbe v(x,t_{n+1})&=\frac{1}{\Gamma(1-\alpha)}\int_0^{t_{n+1}} \frac{1}{(t_{n+1}-s)^\alpha}v'(x,s)\, \mathrm{ds}\nonumber\\
&=\frac{1}{\Gamma(1-\alpha)}\int_{t_n}^{t_{n+1}} \frac{1}{(t_{n+1}-s)^\alpha}v'(x,s)\, \mathrm{ds}
+\frac{1}{\Gamma(1-\alpha)}\int_0^{t_{n}} \frac{1}{(t_{n+1}-s)^\alpha}v'(x,s)\, \mathrm{ds}\nonumber\\
&=: \Dbel v(x,t_{n+1}) +\Dbeh v(x,t_{n+1}). \label{eqn:cont-decom}
\end{align}
We will approximate the local component $ \Dbel v(x,t_{n+1}) $ by the L1 scheme and the history component $ \Dbeh v(x,t_{n+1}) $ by a new exponential sums inspired by sinc approximation \cite{braess2005approximation,MR1226236} in Section \ref{subsec:expSum}. Based upon this, a new local time integrator is presented in Section \ref{subsec:locInt} that can handle long time simulation more efficiently.

\subsection{Approximation by exponential sums}\label{subsec:expSum}
In order to deal with long time situation, we will construct a new approximation by exponential sums inspired by
\cite{braess2005approximation}, which is essentially based upon Sinc quadrature to analytic, improperly integrable functions in $\mathbb{R}$, see \cite{MR1226236}. Let $F(s)$ be an analytic, improperly integrable functions defined on $\mathbb{R}$ and let $\tau>0$ be the step size and $N\in\mathbb{N}_{+}$, then we obtain by integration after Sinc interpolation to $F(s)$ the following quadrature formula
\begin{align*}
\int_{-\infty}^{\infty}F(s)\mathrm{d}s&\approx \tau\sum\limits_{j=-\infty}^{\infty}F(j\tau)\\
&\approx \tau\sum\limits_{j=-N}^{N}F(j\tau).
\end{align*}
Here, the second estimate is a direct truncation of the first estimate. The quadrature error can be estimated by the following theorem \cite[Theorem 3.2.1]{MR1226236}.
\begin{lemma}\label{lem:sincQuadrature}
Let $F(s)$ be holomorphic in the strip $\mathcal{D}_{\delta}:=\{z\in\mathbb{C}:\Im z\in (-\delta,\delta)\}$ with $\delta>0$ such that
\begin{align*}
\int_{\partial\mathcal{D}_{\delta}}|F(s)||ds|<\infty, \text{ and for some } c, \beta>0: |F(s)|<c\mathrm{e}^{-\beta |s|} \text{ for all } s\in \mathbb{R}.
\end{align*}
Then taking $\tau:=\Big(\frac{2\pi\delta}{\beta N}\Big)^{1/2}$, we obtain
\begin{align*}
\left|\int_{-\infty}^{\infty}F(s)\mathrm{d}s-\tau\sum\limits_{j=-N}^{N}F(j\tau)\right|\lesssim \mathrm{e}^{-\sqrt{2\pi\delta\beta N}}.
\end{align*}
Here, the hidden constant depends only on $F(s)$, $\delta$ and $\beta$.
\end{lemma}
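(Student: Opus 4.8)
The plan is to separate the total error into an \emph{aliasing} (discretization) part produced by replacing the integral with the bi-infinite trapezoidal rule and a \emph{truncation} part produced by discarding the tails of that sum, and then to choose $\tau$ so that the two contributions are of the same exponential size. Concretely, I would first write
\begin{align*}
\Bigl|\int_{-\infty}^{\infty}F(s)\,\mathrm{d}s-\tau\sum_{j=-N}^{N}F(j\tau)\Bigr|
\le \underbrace{\Bigl|\int_{-\infty}^{\infty}F(s)\,\mathrm{d}s-\tau\sum_{j=-\infty}^{\infty}F(j\tau)\Bigr|}_{=:E_{\mathrm{disc}}}
+\underbrace{\tau\sum_{|j|>N}|F(j\tau)|}_{=:E_{\mathrm{trunc}}},
\end{align*}
so that the two mechanisms can be estimated independently.

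The truncation term is the routine one: using only the real-line decay hypothesis $|F(s)|<c\mathrm{e}^{-\beta|s|}$ and summing a geometric series,
\begin{align*}
E_{\mathrm{trunc}}\le 2c\,\tau\sum_{j>N}\mathrm{e}^{-\beta j\tau}
=2c\,\tau\,\frac{\mathrm{e}^{-\beta(N+1)\tau}}{1-\mathrm{e}^{-\beta\tau}}
\lesssim \frac{c}{\beta}\,\mathrm{e}^{-\beta N\tau},
\end{align*}
where for small $\tau$ I use $1-\mathrm{e}^{-\beta\tau}\gtrsim \beta\tau$; the constant here depends only on $c$ and $\beta$.

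The discretization term is the heart of the argument and the step I expect to be the main obstacle, since the real-line decay alone is insufficient---one must exploit the holomorphic extension of $F$ to the strip $\mathcal{D}_{\delta}$ together with the boundary-integrability hypothesis $\int_{\partial\mathcal{D}_{\delta}}|F(s)|\,|\mathrm{d}s|<\infty$. I would invoke the Poisson summation formula in the form $\tau\sum_{j}F(j\tau)=\sum_{k}\widehat{F}(2\pi k/\tau)$ (with $\widehat{F}(\xi)=\int F(s)\mathrm{e}^{-i\xi s}\,\mathrm{d}s$), whose $k=0$ term is exactly $\int_{-\infty}^{\infty}F$. For each $k\ne 0$ I would shift the contour defining $\widehat{F}(2\pi k/\tau)$ into the strip (downwards when $k>0$, upwards when $k<0$), which is permissible by holomorphy and Cauchy's theorem and whose horizontal pieces are controlled by the boundary-integrability hypothesis; the shift produces a factor $\mathrm{e}^{-2\pi\delta|k|/\tau}$, giving $|\widehat{F}(2\pi k/\tau)|\lesssim\bigl(\int_{\partial\mathcal{D}_{\delta}}|F|\bigr)\mathrm{e}^{-2\pi\delta|k|/\tau}$. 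Summing the resulting geometric series over $k\ne 0$ yields $E_{\mathrm{disc}}\lesssim \mathrm{e}^{-2\pi\delta/\tau}$, with a constant controlled by $\delta$ and $\int_{\partial\mathcal{D}_{\delta}}|F|$. Equivalently, one may represent the trapezoidal error directly as a contour integral against a kernel whose poles sit at the nodes $j\tau$ and whose residues reproduce the sum, then push the contour out to $\partial\mathcal{D}_{\delta}$; this is the route taken in \cite{MR1226236} and gives the same decay.

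Finally I would balance the two bounds. The aggregate estimate is $\lesssim \mathrm{e}^{-2\pi\delta/\tau}+\mathrm{e}^{-\beta N\tau}$, and equating the exponents $2\pi\delta/\tau=\beta N\tau$ forces $\tau=(2\pi\delta/\beta N)^{1/2}$, precisely the prescribed choice. With this $\tau$ the two exponents coincide,
\begin{align*}
\frac{2\pi\delta}{\tau}=\beta N\tau=\sqrt{2\pi\delta\beta N},
\end{align*}
so the total error is $\lesssim \mathrm{e}^{-\sqrt{2\pi\delta\beta N}}$, with the hidden constant depending only on $F$ (through $\int_{\partial\mathcal{D}_{\delta}}|F|$ and $c$), on $\delta$, and on $\beta$, as asserted.
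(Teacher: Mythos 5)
Your proposal is correct, but note that the paper does not actually prove this lemma: it is quoted verbatim as \cite[Theorem 3.2.1]{MR1226236} (Stenger's book on Sinc methods), so there is no in-paper argument to compare against. What you have written is essentially a reconstruction of the textbook proof: the split into an aliasing error $E_{\mathrm{disc}}$ for the bi-infinite trapezoidal rule plus a tail-truncation error $E_{\mathrm{trunc}}$, the geometric-series bound $E_{\mathrm{trunc}}\lesssim (c/\beta)\mathrm{e}^{-\beta N\tau}$ from the real-line decay, the bound $E_{\mathrm{disc}}\lesssim \mathrm{e}^{-2\pi\delta/\tau}$ from holomorphy in the strip, and the balancing $2\pi\delta/\tau=\beta N\tau$ that forces $\tau=(2\pi\delta/\beta N)^{1/2}$ and yields the common exponent $\sqrt{2\pi\delta\beta N}$. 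Your Poisson-summation route for $E_{\mathrm{disc}}$ is a legitimate alternative to Stenger's contour-kernel representation (which you also mention); both give the same $\mathrm{e}^{-2\pi\delta/\tau}$ decay. The only point worth flagging is a standard technicality in the contour shift: to move the line of integration for $\widehat{F}(2\pi k/\tau)$ to $\Im z=\mp\delta'$ you discard two vertical segments at $\Re z=\pm R$, and killing these as $R\to\infty$ requires $F(s+it)\to 0$ (along a sequence of $R$'s, uniformly in $t\in[-\delta',\delta']$); this is implicit in the membership of $F$ in Stenger's class $B(\mathcal{D}_\delta)$ but is not literally among the two hypotheses displayed in the lemma. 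Since the function $F(s;t)$ to which the lemma is later applied in Theorem 3.1 satisfies this uniform decay, the omission is harmless, but a fully self-contained proof should either add this hypothesis or extract the decay in the strip from the stated ones.
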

We will utilize Lemma \ref{lem:sincQuadrature} to derive a new exponential sums approximation to $\frac{1}{t^{1+\alpha}}$. To this end, we first rewrite it as an integral over $\mathbb{R}$ of a analytic, improperly integrable function, which satisfies the conditions listed in Lemma \ref{lem:sincQuadrature}.
\begin{theorem}[Approximation by exponential sums for long time]
\label{lem:expsum2}
For all $\alpha>0$ and $N\in \mathbb{N}_{+}$, there holds
\begin{align}\label{eq:trunc-long}
\left|\frac{1}{t^{1+\alpha}}- \tau\sum\limits_{\ell=-N}^{N}F(\ell \tau;t)\right|\leq
 \left(
 \frac{(t-\alpha/\mathrm{e})^{-1}+1}{1-\mathrm{e}^{-\pi\sqrt{\beta N}}}+\frac{2}{\beta}
 \right)
 \mathrm{e}^{-\pi\sqrt{\beta N}}\text{ for all } t>\alpha.
\end{align}
Given $t>0$, $F(s;t): \mathbb{R}\to \mathbb{R}^{+}$ as a function of $s\in \mathbb{R}$ is
\begin{align}\label{eq:weight1}
F(s;t)&:=\mathrm{e}^{-t\ln(1+\mathrm{e}^s)}(\ln(1+\mathrm{e}^s))^{\alpha}\frac{1}{1+\mathrm{e}^{-s}}.
\end{align}
The step size $\tau$ and the positive parameter $\beta$ are 
\begin{equation}\label{eq:weight2}
\left\{
\begin{aligned}
 \tau&:=\frac{\pi}{\sqrt{\beta N}}\\
\beta&:=\min\{1,t-\alpha\}.
\end{aligned}
\right.
\end{equation}
\end{theorem}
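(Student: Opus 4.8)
The plan is to realize $t^{-(1+\alpha)}$ as a Sinc-quadrature-friendly integral over $\mathbb{R}$ and then invoke the mechanism behind Lemma \ref{lem:sincQuadrature}, tracking all constants explicitly rather than settling for its hidden constant. First I would start from the Gamma representation $t^{-(1+\alpha)} = \frac{1}{\Gamma(1+\alpha)}\int_0^\infty x^\alpha \mathrm{e}^{-tx}\,\mathrm{d}x$ and apply the substitution $x=\ln(1+\mathrm{e}^s)$, which maps $\mathbb{R}$ monotonically onto $(0,\infty)$ with $\mathrm{d}x=(1+\mathrm{e}^{-s})^{-1}\,\mathrm{d}s$. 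This turns the integrand into exactly $F(s;t)$ from \eqref{eq:weight1}, so that (up to the normalizing constant $\Gamma(1+\alpha)$) the quantity $\tau\sum_{\ell=-N}^{N}F(\ell\tau;t)$ is the truncated Sinc quadrature of $\int_{-\infty}^\infty F(s;t)\,\mathrm{d}s$. The total error then splits into a discretization error $E_1$ (infinite sum versus integral) and a truncation error $E_2$ (the tail $|\ell|>N$), which I bound separately.

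To feed $F$ into the Sinc machinery I must verify the hypotheses of Lemma \ref{lem:sincQuadrature} with the specific strip $\delta=\pi/2$. For holomorphy, the only obstruction is the branch point of $\ln(1+\mathrm{e}^z)$ at $1+\mathrm{e}^z=0$, i.e. at $z=\pm\mathrm{i}\pi$, so $F(\cdot;t)$ is holomorphic on $\mathcal{D}_{\pi/2}$ and $\delta=\pi/2$ is admissible. For the exponential-decay condition $|F(s;t)|\le c\,\mathrm{e}^{-\beta|s|}$ I analyze the two tails: as $s\to+\infty$ one has $\ln(1+\mathrm{e}^s)\ge s$ and $(1+\mathrm{e}^{-s})^{-1}\le 1$, whence $F(s;t)\lesssim s^\alpha\mathrm{e}^{-ts}$, while as $s\to-\infty$ one finds $F(s;t)\sim \mathrm{e}^{(1+\alpha)s}$. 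With the choice $\beta=\min\{1,t-\alpha\}$ of \eqref{eq:weight2} and the standing hypothesis $t>\alpha$, both tails decay at least like $\mathrm{e}^{-\beta|s|}$ (the polynomial factor $s^\alpha$ is absorbed since $t-\beta\ge\alpha>0$), and the constant may be taken as $c\le 1$. Setting $\delta=\pi/2$ in Lemma \ref{lem:sincQuadrature} forces $\tau=(2\pi\delta/(\beta N))^{1/2}=\pi/\sqrt{\beta N}$ and collapses both error exponents to $\pi\sqrt{\beta N}$.

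Since \eqref{eq:trunc-long} demands explicit constants, I would open up the two contributions. The truncation error obeys $E_2\le 2c\,\tau\sum_{\ell>N}\mathrm{e}^{-\beta\ell\tau}$; using the elementary inequality $1-\mathrm{e}^{-x}\ge x\mathrm{e}^{-x}$ to control $\tau/(1-\mathrm{e}^{-\beta\tau})$ gives $E_2\le \frac{2}{\beta}\mathrm{e}^{-\pi\sqrt{\beta N}}$, which is exactly the $\frac{2}{\beta}$ summand. The discretization error is controlled by the strip integral $N_{\pi/2}(F):=\int_{\partial\mathcal{D}_{\pi/2}}|F(z)|\,|\mathrm{d}z|$ times the geometric factor $\mathrm{e}^{-\pi\sqrt{\beta N}}/(1-\mathrm{e}^{-\pi\sqrt{\beta N}})$; the key trick here is the inequality $\ln s\le s/\mathrm{e}$, which yields $s^\alpha\le \mathrm{e}^{(\alpha/\mathrm{e})s}$ and hence $\int_0^\infty s^\alpha\mathrm{e}^{-ts}\,\mathrm{d}s\le (t-\alpha/\mathrm{e})^{-1}$, so that $N_{\pi/2}(F)\le (t-\alpha/\mathrm{e})^{-1}+1$. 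Adding the two pieces reproduces the stated right-hand side.

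The main obstacle is not the change of variables but the explicit constant tracking on the complex strip. Estimating $|F|$ on the lines $\Im z=\pm\pi/2$, where $\ln(1+\mathrm{e}^z)$ is genuinely complex (contributing a real part $\tfrac12\ln(1+\mathrm{e}^{2s})$ and an $\arctan$-type imaginary part), and then bounding the resulting strip integral $N_{\pi/2}(F)$ by the remarkably clean $(t-\alpha/\mathrm{e})^{-1}+1$ is the delicate step; it is precisely here that the inequality $\ln s\le s/\mathrm{e}$ and the boundedness of the $\arctan$ contribution (responsible for the $+1$) are essential. Confirming that the decay constant may be taken uniformly as $c\le 1$, so that the truncation term is exactly $2/\beta$, is the other spot requiring care.
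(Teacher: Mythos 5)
Your proposal follows essentially the same route as the paper: the Gamma-function representation of $t^{-(1+\alpha)}$, the substitution $x=\ln(1+\mathrm{e}^s)$ producing exactly $F(s;t)$, verification of holomorphy and exponential decay with rate $\beta=\min\{1,t-\alpha\}$ on the strip of half-width $\delta=\pi/2$ (using $\ln x\le x/\mathrm{e}$ to absorb the algebraic factor), and then the Sinc quadrature error estimate of Lemma \ref{lem:sincQuadrature}. The only difference is that you unpack the quadrature error into its discretization and truncation parts to track the explicit constants $(t-\alpha/\mathrm{e})^{-1}+1$ and $2/\beta$, whereas the paper simply invokes Lemma \ref{lem:sincQuadrature} with its hidden constant; your version is, if anything, the more complete justification of the stated bound.
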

\begin{proof}
For any $t>0$, by the definition of Gamma function, we obtain
\begin{align*}
t^{-(1+\alpha)}&
=\frac{1}{\Gamma(\alpha+1)}\int_{0}^{\infty}\mathrm{e}^{-tx}x^{\alpha}\dx.
\end{align*}
Then taking integration by parts and a change of variables yields,
\begin{align*}
t^{-(1+\alpha)}
&=\frac{1}{\Gamma(\alpha+1)}\int_{-\infty}^{+\infty}\mathrm{e}^{-t\ln(1+\mathrm{e}^s)}
(\ln(1+\mathrm{e}^s))^{\alpha}\frac{1}{1+\mathrm{e}^{-s}}\mathrm{d}s\\
&=\frac{1}{\Gamma(\alpha+1)}\int_{-\infty}^{+\infty}F(s;t)\mathrm{d}s.
\end{align*}
Here, $F(s;t):\mathbb{R}\to \mathbb{R}^{+}$ as a function of $s$ parameterized by $t\in (0,\infty)$ is defined in \eqref{eq:weight1}. In the following, we will verify that $F(s;t)$ satisfies the conditions listed in Lemma \ref{lem:sincQuadrature} when $t>\alpha$.

On the one hand, it holds that
\begin{align}\label{eq:111}
|F(s;t)|\lesssim \mathrm{e}^{-(t-\alpha)|\Re s|}.
\end{align}
Indeed, Definition \eqref{eq:weight1} implies
\[
|F(s;t)|=\left|\mathrm{e}^{-t\ln(1+\mathrm{e}^s)}\left(\ln(1+\mathrm{e}^s)\right)^{\alpha}\frac{1}{1+\mathrm{e}^{-s}}\right|\lesssim
\mathrm{e}^{-t\Re s}(\Re s)^{\alpha} \text{ for all }\Re s>0.
\]
This, together with the fact that
\[
\ln x\leq \frac{x}{\mathrm{e}} \text{ for all } x>0,
\]
leads to
\[
|F(s;t)|\leq\mathrm{e}^{-(t-\alpha)\Re s}\text{ for all }\Re s>0.
\]
One can observe that
\begin{align*}
|F(s;t)|\lesssim \mathrm{e}^{-|\Re s|} \text{ for all }\Re s<0.
\end{align*}
Consequently, inequality \eqref{eq:111} has been proved.

Note that $F(s;t)$ is holomorphic in the strip $\mathcal{D}:=\{z\in\mathbb{C}:\Im z\in (-\pi/2,\pi/2)\}$, and one can prove from inequality \eqref{eq:111},
\begin{align*}
\int_{\partial\mathcal{D}}|F(s;t)||ds|\lesssim \frac{1}{t-\alpha}+1.
\end{align*}
Consequently, all the requirement for Lemma \ref{lem:sincQuadrature} is fulfilled and the assertion follows from Lemma \ref{lem:sincQuadrature}.
\end{proof}
Note that we attempt to use sum of exponentials to approximate the history component $ \Dbeh v(x,t_{n+1})$, and thus we demand an estimate \eqref{eq:trunc-long} for $t>\tau_f$ instead. This, nevertheless, can be derived by a scaling argument.
\begin{corollary}\label{corollary:soe}
For any $\epsilon>0$ and $\tau_f\in (0,1)$, there exist $(\omega_{j},\lambda_{j})_{j=1}^{N_{\text{exp}}}$, s.t.,
\begin{align}\label{eq:trunc-longxx}
\Big|\frac{1}{t^{1+\alpha}}- \sum_{j=1}^{N_{\text{exp}}}\omega_{j}\mathrm{e}^{-\lambda_{j}t}\Big|\lesssim \epsilon\text{ for all } t\in(\tau_f,T),
\end{align}
with
\[
N_{\text{exp}}\approx |\log({\tau_f^{1+\alpha}}{\epsilon})|^2.
\]
\end{corollary}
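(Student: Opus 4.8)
The plan is to observe first that the truncated Sinc quadrature appearing in Theorem \ref{lem:expsum2} is \emph{already} a sum of exponentials in the variable $t$. Indeed, fixing the nodes $s_\ell := \ell\tau$ for $\ell=-N,\dots,N$ and reading off the definition \eqref{eq:weight1} of $F$, one has
$$\tau F(s_\ell;t) = \Big(\tau\,(\ln(1+\mathrm{e}^{s_\ell}))^{\alpha}\,(1+\mathrm{e}^{-s_\ell})^{-1}\Big)\,\mathrm{e}^{-t\ln(1+\mathrm{e}^{s_\ell})},$$
so that, setting $\widehat\lambda_\ell := \ln(1+\mathrm{e}^{s_\ell})$ and letting $\widehat\omega_\ell$ denote the bracketed prefactor, the quadrature sum equals $\tau\sum_{\ell=-N}^{N}F(s_\ell;t) = \sum_{\ell=-N}^{N}\widehat\omega_\ell\,\mathrm{e}^{-\widehat\lambda_\ell t}$, exactly the required form. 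Thus Theorem \ref{lem:expsum2} already delivers an exponential-sum approximation; the only gaps are that it is stated for $t>\alpha$ rather than $t>\tau_f$, and that its nodes and weights depend on $t$ through $\beta$ and $\tau$.

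Both gaps I would close by scaling. For $t\in(\tau_f,T)$ write $t=\tau_f\,\widehat t$ with $\widehat t\in(1,T/\tau_f)$ and use the identity $t^{-(1+\alpha)}=\tau_f^{-(1+\alpha)}\,\widehat t^{-(1+\alpha)}$. Since $\widehat t>1>\alpha$, Theorem \ref{lem:expsum2} applies to $\widehat t^{-(1+\alpha)}$. Undoing the scaling in the exponent, $\mathrm{e}^{-\widehat\lambda_\ell\widehat t}=\mathrm{e}^{-(\widehat\lambda_\ell/\tau_f)t}$, and multiplying the weights by $\tau_f^{-(1+\alpha)}$, I obtain the candidate parameters $\lambda_\ell:=\widehat\lambda_\ell/\tau_f$ and $\omega_\ell:=\tau_f^{-(1+\alpha)}\widehat\omega_\ell$, with $N_{\text{exp}}=2N+1$. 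The error obeys
$$\Big|\frac{1}{t^{1+\alpha}}-\sum_{\ell=-N}^{N}\omega_\ell\,\mathrm{e}^{-\lambda_\ell t}\Big| = \tau_f^{-(1+\alpha)}\,\Big|\frac{1}{\widehat t^{1+\alpha}}-\tau\sum_{\ell=-N}^{N}F(s_\ell;\widehat t)\Big|,$$
so everything reduces to bounding the Sinc error for $\widehat t^{-(1+\alpha)}$ uniformly over $\widehat t>1$.

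The main obstacle is precisely this uniformity: the statement of Theorem \ref{lem:expsum2} builds a $t$-dependent rule (its $\beta=\min\{1,t-\alpha\}$ and $\tau=\pi/\sqrt{\beta N}$ both vary with $t$), whereas the corollary needs one fixed set of nodes and weights valid for every $\widehat t$ simultaneously. I would therefore freeze the worst-case decay rate $\beta_0:=1-\alpha=\inf_{\widehat t>1}\min\{1,\widehat t-\alpha\}$ together with $\tau:=\pi/\sqrt{\beta_0 N}$, and re-run the estimate of Lemma \ref{lem:sincQuadrature} with these frozen parameters. The discretization part of the Sinc error depends only on the strip half-width $\pi/2$ and on $\tau$, hence is $\widehat t$-independent and of size $\mathrm{e}^{-\pi\sqrt{\beta_0 N}}$; the truncation part is controlled by the tail decay $\mathrm{e}^{-\min\{1,\widehat t-\alpha\}N\tau}\le \mathrm{e}^{-\beta_0 N\tau}=\mathrm{e}^{-\pi\sqrt{\beta_0 N}}$, using $\min\{1,\widehat t-\alpha\}\ge\beta_0$ for all $\widehat t>1$. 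The prefactors $(\widehat t-\alpha/\mathrm{e})^{-1}$ and $2/\beta_0$ from \eqref{eq:trunc-long} stay bounded for $\widehat t>1$. Hence the bracketed error above is $\lesssim \mathrm{e}^{-\pi\sqrt{(1-\alpha)N}}$ uniformly in $\widehat t$; intuitively this freezing is harmless because larger $\widehat t$ only accelerates the decay of $F(\,\cdot\,;\widehat t)$, so $\widehat t\to 1^{+}$ is genuinely the worst case.

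Finally I would choose $N$ to meet the target accuracy. Combining the last two displays gives a total error $\lesssim \tau_f^{-(1+\alpha)}\mathrm{e}^{-\pi\sqrt{(1-\alpha)N}}$, so requiring $\mathrm{e}^{-\pi\sqrt{(1-\alpha)N}}\lesssim \tau_f^{1+\alpha}\epsilon$ forces $\pi\sqrt{(1-\alpha)N}\gtrsim|\log(\tau_f^{1+\alpha}\epsilon)|$, that is $N\gtrsim (1-\alpha)^{-1}\pi^{-2}\,|\log(\tau_f^{1+\alpha}\epsilon)|^2$. With this $N$ the bound \eqref{eq:trunc-longxx} holds for all $t\in(\tau_f,T)$, and $N_{\text{exp}}=2N+1\approx|\log(\tau_f^{1+\alpha}\epsilon)|^2$, which is the claimed count independent of the final time $T$.
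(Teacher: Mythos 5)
Your proof is correct, and its skeleton (rescale $t$ so that Theorem \ref{lem:expsum2} applies, read off the quadrature as an exponential sum, then choose $N$ to meet the tolerance $\tau_f^{1+\alpha}\epsilon$) is the same as the paper's. The genuine difference is in how the two arguments obtain a \emph{single} quadrature rule valid for all $t$ in the range, which you correctly identify as the crux: you rescale by $1/\tau_f$, so the rescaled variable $\widehat t$ only satisfies $\widehat t>1$, the decay parameter $\beta=\min\{1,\widehat t-\alpha\}$ still varies in $[1-\alpha,1]$, and you must freeze it at the worst case $\beta_0=1-\alpha$ and re-run the Sinc estimate of Lemma \ref{lem:sincQuadrature} with non-optimal parameters. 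This works (the discretization part depends only on the strip width and the frozen $\tau$, and the tail is dominated by the slowest decay), but it costs you a re-derivation and leaves an $(1-\alpha)^{-1}$ factor in the required $N$, which degrades as $\alpha\to1$. The paper instead rescales by $(\alpha+1)/\tau_f$, so the new variable $s$ satisfies $s>\alpha+1$, hence $s-\alpha>1$ and $\beta=\min\{1,s-\alpha\}\equiv1$ identically; the rule from Theorem \ref{lem:expsum2} is then automatically $s$-independent (with $\tau=\pi/\sqrt N$), the prefactor $(s-\alpha/\mathrm{e})^{-1}+1$ is uniformly bounded, and no re-examination of the quadrature error is needed. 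Both routes land on $N_{\text{exp}}\approx|\log(\tau_f^{1+\alpha}\epsilon)|^2$; the paper's choice of scaling factor is the slicker resolution of the uniformity issue you flagged, while your version makes explicit why freezing the worst-case decay rate is harmless.
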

\begin{proof}
Let $t>\tau_f$, then the new variable $s:=({\alpha}+1)\frac{t}{\tau_f}>{\alpha}+1$. In view of \eqref{eq:trunc-long}, we obtain for $N\in \mathbb{N}$,
\begin{align*}
\Big|\frac{1}{s^{1+\alpha}}- \tau\sum\limits_{\ell=-N}^{N}F(\ell \tau;s)\Big|\leq
 \Big(\frac{2}{1-\mathrm{e}^{-\pi\sqrt{ N}}}+2\Big)\mathrm{e}^{-\pi\sqrt{N}}
\end{align*}
with $\tau=\frac{\pi}{\sqrt{N}}$.

Next, making a change of variable by $t=\frac{s\tau_f}{{\alpha}+1}$ and using \eqref{eq:weight1}, lead to
\begin{align*}
\Big|\frac{1}{t^{1+\alpha}}- \sum_{j=1}^{N_{\text{exp}}}\omega_{j}\mathrm{e}^{-\lambda_{j}t}\Big|\lesssim \tau_f^{-1-\alpha}\Big(\frac{2}{1-\mathrm{e}^{-\pi\sqrt{ \frac{N_{\text{exp}}-1}{2}}}}+2\Big)\mathrm{e}^{-\pi\sqrt{\frac{N_{\text{exp}}-1}{2}}}\text{ for all } t\in(\tau_f,\infty).
\end{align*}
Here,
\begin{align*}
N_{\exp}&=2N+1, \\
\omega_j&:=\tau_f^{-1-\alpha}({\alpha}+1)^{1+\alpha}
\frac{\pi}{\sqrt{N}}\Big(\ln(1+\mathrm{e}^{(j+N+1)\tau})\Big)^{\alpha}\frac{1}{1+\mathrm{e}^{-(j+N+1)\tau}},\\
\lambda_j&:=\tau_f^{-1}({\alpha}+1)\ln(1+\mathrm{e}^{(j+N+1)\tau}).
\end{align*}
Consequently, to guarantee
\begin{align*}
\Big|\frac{1}{t^{1+\alpha}}- \sum_{j=1}^{N_{\text{exp}}}\omega_{j}\mathrm{e}^{-\lambda_{j}t}\Big|\lesssim\epsilon \text{ for all } t\in(\tau_f,\infty),
\end{align*}
one only needs to choose $N_{\text{exp}}$ sufficiently large, s.t.,
\begin{align*}
\tau_f^{-1-\alpha}\Big(\frac{2}{1-\mathrm{e}^{-\pi\sqrt{ \frac{N_{\text{exp}}-1}{2}}}}+2\Big)\mathrm{e}^{-\pi\sqrt{\frac{N_{\text{exp}}-1}{2}}}\lesssim \epsilon,
\end{align*}
i.e.,
\begin{align*}
N_{\text{exp}}\approx |\log({\tau_f^{1+\alpha}}{\epsilon})|^2.
\end{align*}
This completes the proof.
\end{proof}
In another word, a number of $\mathcal{O}(|\log({\tau_f^{1+\alpha}}{\epsilon})|^2)$ terms is required to have an approximation error below a tolerance $\epsilon>0$. Note that this result is independent of the final time. Further, one can observe that $\lambda_j=\mathcal{O}(\tau_f^{-1})$ and $\lambda_j>0$ for all $j=1,\cdots, N_{\text{exp}}$. Denote
\begin{align}\label{not:gamma}
\gamma:=\min_{j=1,\cdots,N_{\text{exp}}}\{\lambda_j\},
\end{align}
then $\gamma\gg 1$ and $\gamma=\mathcal{O}(\tau_f^{-1})$.
\begin{remark}[ Alternative approximation by exponential sums]
Another approximation by exponential sums are provided in \cite[Theorem 2.5]{jiang2017fast},
For any $\epsilon>0$, there exist $(\omega_{1,j},\lambda_{1,j})_{j=1}^{N_{1,\text{exp}}}$, s.t.,
\begin{align}\label{eq:trunc-short}
\Big|\frac{1}{t^{1+\alpha}}- \sum_{j=1}^{N_{1,\text{exp}}}\omega_{1,j}\mathrm{e}^{-\lambda_{1,j}t}\Big|\lesssim \epsilon\text{ for all } t\in(\tau_f,T),
\end{align}
with
\[
N_{1,\text{exp}}\approx |\ln\epsilon|\Bigg(\ln|\ln\epsilon|+\ln \frac{T}{\tau_f}\Bigg)+|\ln\tau_f|\Big(\ln|\ln\epsilon|+|\ln\tau_f|\Big).
\]
Note that this approximation is only suitable for cases with a final time of reasonable size due to the term $\ln \frac{T}{\tau_f}$ in the total number of terms $N_{1,\text{exp}}$.
\end{remark}
\subsection{Local time integrators}\label{subsec:locInt}
We construct in this section local time integrators based upon the approximation by exponential sums developed in Corollary \ref{corollary:soe}.
The accuracy requirement of this exponential sums approximation is derived in \eqref{assum:epsilon}, which implies that $\epsilon=\mathcal{O}(T^{-1-\alpha})$. The fine scale time step $\tau_f$ can be very tiny due to the possible singularity of the solution $u(\cdot,t)$ at the original $t=0$ due to, for example, insufficient  regularity in the source term $f$ or the initial data $u_0$. For this reason, we assume
\begin{align}\label{ass:comp}
\tau_f\ll \epsilon
\end{align}
for given tolerance $\epsilon>0$ throughout this paper.


Applying the summation of exponentials (Corollary \ref{corollary:soe}) to the history component $\Dbeh$ after taking integration by parts, we obtain
\begin{align}
&\bar{\partial}_{\tau_f}^{\alpha} v(x,t_{n+1})\nonumber\\
&\approx\frac{v(x,t_{n+1})-v(x,t_n)}{\tau_f^{\alpha}c_{\alpha}}
+\frac{1}{\Gamma(1-\alpha)}\Bigg( \frac{v(x,t_n)}{\tau_f^{\alpha}}-\frac{v(x,t_0)}{ t_{n+1}^{\alpha}}
-\alpha\sum_{j=1}^{\nexp}\omega_{j}\int_{0}^{t_n}\mathrm{e}^{-\lambda_{j}(t_{n+1}-s)}v(x,s)\mathrm{d}s\Bigg)
\nonumber\\
&=\underbrace{\frac{v(x,t_{n+1})-v(x,t_n)}{\tau_f^{\alpha}c_{\alpha}}}_{=:\Dbelaxx v(x,t_{n+1})}
+\underbrace{\frac{1}{\Gamma(1-\alpha)}\Big( \frac{v(x,t_n)}{\tau_f^{\alpha}}-\frac{v(x,t_0)}{ t_{n+1}^{\alpha}}
-\alpha\sum_{j=1}^{N_{\text{exp}}}\omega_{j}{\psi}_{j}[n]\Big)
}_{=:\Dbehaxx v(x,t_{n+1})}
.\label{eq:form1}
\end{align}
Here, the parameters ${\psi}_{j}[n]$ for $j=1,\cdots,N_{\text{exp}}$ are defined by
\begin{align*}
{\psi}_{j}[n]:=\int_{0}^{t_n}\mathrm{e}^{-\lambda_{j}(t_{n+1}-s)}v(x,s)\mathrm{d}s.
\end{align*}
which can be written as
\begin{align*}
{\psi}_{j}[n]
=\int_{0}^{t_{n-1}}\mathrm{e}^{-\lambda_{j}(t_{n+1}-s)}v(x,s)\mathrm{d}s
+\int_{t_{n-1}}^{t_n}\mathrm{e}^{-\lambda_{j}(t_{n+1}-s)}v(x,s)\mathrm{d}s.
\end{align*}
This implies the following recurrence relation
\begin{align*}
\psi_{j}[n]&=\mathrm{e}^{-\lambda_{j}(t_{n+1}-t_n)}\psi_{j}[n-1]+\int_{t_{n-1}}^{t_n}
\mathrm{e}^{-\lambda_{j}(t_{n+1}-s)}v(x,s)\mathrm{d}s\text{ for }n\geq 1\\
\psi_{j}[0]&=0.
\end{align*}
Note that the integrals in the recurrence relation can be estimated by the L1 scheme, which yields
\begin{align}\label{eq:monsxxxxxx}
\int_{t_{n-1}}^{t_n}\mathrm{e}^{-\lambda_{j}(t_{n+1}-s)}v(x,s)\mathrm{d}s
&\approx\int_{t_{n-1}}^{t_n}\mathrm{e}^{-\lambda_{j}(t_{n+1}-s)}
\Big(\frac{t_{n}-s}{\tau_f}v(x,t_{n-1})+\frac{s -t_{n-1}}{\tau_f}v(x,t_{n})\Big)\mathrm{d}s\\
&=: c_{1,j}^{\tau_f}v(x,t_{n-1})+c_{2,j}^{\tau_f}v(x,t_{n}),\nonumber
\end{align}
with
\begin{align}\label{eq:xxxxx}
c_{1,j}^{\tau_f}:=\frac{\mathrm{e}^{-\lambda_{j}\tau_f}}{\lambda_{j}^2\tau_f}\Big( 1-\mathrm{e}^{-\lambda_{j}\tau_f}
-\lambda_{j}\tau_f\mathrm{e}^{-\lambda_{j}\tau_f}\Big)\text{ and }
c_{2,j}^{\tau_f}:=\frac{\mathrm{e}^{-\lambda_{j}\tau_f}}{\lambda_{j}^2\tau_f}\Big( -1+\mathrm{e}^{-\lambda_{j}\tau_f}
+\lambda_{j}\tau_f\Big).
\end{align}
Note also that the parameters $c_{i,j}^{\tau_f}$ for $i=1,2$ and $j=1,2,\cdots,N_{\text{exp}}$ rely only on $\{\lambda_j\}_{j=1}^{N_{\text{exp}}}$ and the parameter $\tau_f$. Analogously, we can define $c_{i,j}^{\tau_c}$ with a coarse step size $\tau_c$.

We establish an estimate of those parameters in the next lemma that provides a property of the history information.
\begin{lemma}\label{lem:222xxx}
Let $c_{i,j}^{\tau}$ for $i=1,2$ and $j=1,2,\cdots,N_{\text{exp}}$ be defined in \eqref{eq:xxxxx} with $\tau=\tau_f$ or $\tau=\tau_c$. Then it holds
\begin{align*}
\Bigg|\sum_{j=1}^{\nexp}\omega_{j}c_{i,j}^{\tau}\Bigg|\leq \mathrm{e}^{-\gamma \tau}\Big(\tau^{-\alpha}(1-\alpha)^{-1}+\epsilon\tau/2\Big).
\end{align*}
\end{lemma}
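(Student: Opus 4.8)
The plan is to read the coefficients $c_{i,j}^{\tau}$ off the trapezoidal quadrature \eqref{eq:monsxxxxxx} as integrals of the exponential kernel against the two linear hat weights, and then to convert the exponential sum $\sum_{j}\omega_{j}\mathrm{e}^{-\lambda_{j}t}$ into the power $t^{-1-\alpha}$ via Corollary \ref{corollary:soe}. A direct integration by parts shows that the explicit formulas \eqref{eq:xxxxx} are exactly
\[
c_{1,j}^{\tau}=\mathrm{e}^{-\lambda_{j}\tau}\,\frac{1}{\tau}\int_{0}^{\tau}r\,\mathrm{e}^{-\lambda_{j}r}\,\mathrm{d}r,
\qquad
c_{2,j}^{\tau}=\mathrm{e}^{-\lambda_{j}\tau}\,\frac{1}{\tau}\int_{0}^{\tau}(\tau-r)\,\mathrm{e}^{-\lambda_{j}r}\,\mathrm{d}r,
\]
where the outer factor $\mathrm{e}^{-\lambda_{j}\tau}$ is the value of the kernel at the left endpoint $s=t_{n}$ of the quadrature interval. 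This factor is the source of the asserted $\mathrm{e}^{-\gamma\tau}$: since $\lambda_{j}\ge\gamma$ by \eqref{not:gamma} and every term is positive, we may write $\mathrm{e}^{-\lambda_{j}\tau}\le\mathrm{e}^{-\gamma\tau}$ and pull $\mathrm{e}^{-\gamma\tau}$ out of the sum.

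This reduces the claim to bounding $\tfrac{1}{\tau}\int_{0}^{\tau}w_{i}(r)\big(\sum_{j=1}^{\nexp}\omega_{j}\mathrm{e}^{-\lambda_{j}r}\big)\mathrm{d}r$ by $\tau^{-\alpha}(1-\alpha)^{-1}+\epsilon\tau/2$, where $w_{1}(r)=r$ and $w_{2}(r)=\tau-r$. Because $\tau\in\{\tau_{f},\tau_{c}\}$ obeys $\tau\ge\tau_{f}$, Corollary \ref{corollary:soe} applies on the range of integration and yields $\sum_{j}\omega_{j}\mathrm{e}^{-\lambda_{j}r}\le r^{-1-\alpha}+\epsilon$. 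For $i=1$ the weight $w_{1}(r)=r$ exactly tames the singularity of $r^{-1-\alpha}$ at the origin, and the two elementary integrals $\tfrac{1}{\tau}\int_{0}^{\tau}r\cdot r^{-1-\alpha}\mathrm{d}r=\tau^{-\alpha}(1-\alpha)^{-1}$ and $\tfrac{1}{\tau}\int_{0}^{\tau}r\cdot\epsilon\,\mathrm{d}r=\epsilon\tau/2$ reproduce the asserted bound verbatim, with the contribution of $r\in(0,\tau_{f})$ (where the corollary is silent) being of lower order precisely because $w_{1}$ vanishes at $r=0$.

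The hard part is $i=2$: the weight $w_{2}(r)=\tau-r$ does not vanish at $r=0$, so $\int_{0}^{\tau}(\tau-r)r^{-1-\alpha}\mathrm{d}r$ diverges and the crude factorisation overestimates, producing a spurious $\tau_{f}^{-\alpha}$ in place of $\tau^{-\alpha}$. The fix I would use is to keep the kernel argument bounded away from the origin: substituting $t=t_{n+1}-s\in[\tau,2\tau]$ gives the mirrored representation $c_{2,j}^{\tau}=\tfrac{1}{\tau}\int_{0}^{\tau}r\,\mathrm{e}^{-\lambda_{j}(2\tau-r)}\mathrm{d}r$, in which $2\tau-r\ge\tau\ge\tau_{f}$ throughout. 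Then Corollary \ref{corollary:soe} gives $\sum_{j}\omega_{j}\mathrm{e}^{-\lambda_{j}(2\tau-r)}\le(2\tau-r)^{-1-\alpha}+\epsilon\le\tau^{-1-\alpha}+\epsilon$, and the now-integrable weight $r/\tau$ integrates to $\tau^{-\alpha}/2+\epsilon\tau/2\le\tau^{-\alpha}(1-\alpha)^{-1}+\epsilon\tau/2$; the prefactor $\mathrm{e}^{-\lambda_{j}(2\tau-r)}\le\mathrm{e}^{-\gamma\tau}$ is retained from the same inequality $\lambda_{j}\ge\gamma$. The delicate point — and where the estimate must be handled most carefully — is that for $i=2$ the two roles of $\lambda_{j}\ge\gamma$ (extracting $\mathrm{e}^{-\gamma\tau}$ and keeping the $\tau^{-\alpha}$ scale) pull against each other, so one cannot simply factor and integrate as in the $i=1$ case; a convenient uniform alternative is the pointwise estimate $c_{i,j}^{\tau}\le\tfrac{\tau}{2}\mathrm{e}^{-\lambda_{j}\tau}$, which follows from the convexity inequalities $x-1+\mathrm{e}^{-x}\le x^{2}/2$ and $1-\mathrm{e}^{-x}-x\mathrm{e}^{-x}\le x^{2}/2$ (with $x=\lambda_{j}\tau$) applied to the numerators in \eqref{eq:xxxxx}, after which summing and invoking \eqref{eq:trunc-longxx} at $t=\tau$ closes the bound. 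In every route the number $\nexp$ of exponentials never enters the constant, so the estimate is independent of the final time $T$, as required.
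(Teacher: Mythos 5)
Your treatment of the case $i=1$ is essentially the paper's own proof: the paper likewise writes $c_{1,j}^{\tau}$ as the integral in \eqref{eq:monsxxxxxx}, extracts $\mathrm{e}^{-\lambda_j\tau}\le\mathrm{e}^{-\gamma\tau}$ from the shift $t_{n+1}-s=\tau+(t_n-s)$, applies the exponential-sum bound of Corollary \ref{corollary:soe} pointwise to the remaining sum $\sum_j\omega_j\mathrm{e}^{-\lambda_j r}$, and evaluates $\tfrac1\tau\int_0^\tau r\cdot r^{-1-\alpha}\,\mathrm{d}r=\tau^{-\alpha}(1-\alpha)^{-1}$ and $\tfrac1\tau\int_0^\tau r\,\epsilon\,\mathrm{d}r=\epsilon\tau/2$. (The paper is silent about the range $r<\tau_f$, where \eqref{eq:trunc-longxx} is not guaranteed; your remark that the vanishing weight makes that contribution lower order is, if anything, more care than the paper takes.)

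The gap is in $i=2$, and you have half-diagnosed it yourself. Your ``mirrored'' route uses the kernel $\mathrm{e}^{-\lambda_j(2\tau-r)}$ twice: once, summed against $\omega_j$, to invoke \eqref{eq:trunc-longxx} and produce $(2\tau-r)^{-1-\alpha}+\epsilon$, and once more to claim a surviving prefactor $\mathrm{e}^{-\gamma\tau}$. These two uses are mutually exclusive --- once the SOE bound has converted the sum of exponentials into a power, there is no exponential left to bound by $\mathrm{e}^{-\gamma\tau}$ --- so that route only yields $\tau^{-\alpha}/2+\epsilon\tau/2$ \emph{without} the factor $\mathrm{e}^{-\gamma\tau}$. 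Your fallback, the pointwise bound $c_{i,j}^{\tau}\le\tfrac{\tau}{2}\mathrm{e}^{-\lambda_j\tau}$ (whose convexity inequalities are correct) followed by \eqref{eq:trunc-longxx} at $t=\tau$, has exactly the same defect. The factor $\mathrm{e}^{-\gamma\tau}$ is not cosmetic: condition \eqref{assm:gamma} and the contraction estimate for the history terms $d_k^n$ in the proof of Theorem \ref{prop:wavelet-based} rely on $\mathrm{e}^{-\tau_c\gamma}\bigl((1-\alpha)^{-1}+\epsilon\tau_c^{1+\alpha}/2\bigr)$ being small, which is precisely what that exponential provides. So as written your argument proves a strictly weaker inequality for $i=2$ than Lemma \ref{lem:222xxx} asserts. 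For what it is worth, the paper writes out only $i=1$ and dismisses $i=2$ with ``the other case can be shown similarly''; your observation that the weight $\tau-r$ no longer cancels the singularity of $r^{-1-\alpha}$ at $r=0$, so the $i=1$ computation does not transfer verbatim, is a legitimate criticism of that sentence --- but it also means you have not closed the case $i=2$, only established the bound with $\mathrm{e}^{-\gamma\tau}$ replaced by $1$.
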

\begin{proof}
A combination of \eqref{eq:monsxxxxxx} and \eqref{eq:trunc-short} lead to
\begin{align*}
\Bigg|\sum_{j=1}^{\nexp}\omega_{j}c_{1,j}^{\tau}\Bigg|&=
\Bigg|\sum_{j=1}^{\nexp}\omega_{j}\int_{t_{n-1}}^{t_n}\mathrm{e}^{-\lambda_{j}(t_{n+1}-s)}
\frac{t_{n}-s}{\tau_f}\mathrm{d}s\Bigg|\\
&\leq \mathrm{e}^{-\gamma \tau}
\Bigg|\sum_{j=1}^{\nexp}\omega_{j}\int_{0}^{\tau}\mathrm{e}^{-\lambda_{j}(\tau-s)}
\frac{\tau-s}{\tau}\mathrm{d}s\Bigg|\\
&\leq\mathrm{e}^{-\gamma \tau}\left(
\int_{0}^{\tau}(\tau-s)^{-1-\alpha}
\frac{\tau-s}{\tau}\mathrm{d}s+\epsilon\int_{0}^{\tau}
\frac{\tau-s}{\tau}\mathrm{d}s\right)\\
&=\mathrm{e}^{-\gamma \tau}(\tau^{-\alpha}(1-\alpha)^{-1}+\epsilon\tau/2).
\end{align*}
Recall that $\gamma$ is defined in \eqref{not:gamma}. This has proved the case for $i=1$. The other case can be shown similarly.
\end{proof}
\begin{remark}[Efficiency of the new time stepping scheme \eqref{eq:form1}]
$\Dap$ can be defined similarly by replacing the fine time steps with the coarse time steps.
One can observe from formulas \eqref{eq:form1} that each calculation of $\Dap v(x,T^{n})$ involves $N_{\text{exp}}$ calculation and  $N_{\text{exp}}$ storage for $n=2,\cdots, M_c$. Note from \eqref{eq:trunc-long} and \eqref{ass:comp} that $N_{\text{exp}}\ll M_{f}$ and $N_{\text{exp}}$ is independent of the final time $T$ as $T\to \infty$. Comparing with the L1 scheme \eqref{eq:l1scheme}, the new time stepping scheme is much more efficient.
\end{remark}
For the sake of conciseness, we will rewrite the time integrator as
\begin{equation}\label{eq:form}
\begin{aligned}
\bar{\partial}_{\tau_f}^{\alpha} v(x,t_{n+1})
&=:\frac{v(x,t_{n+1})-v(x,t_n)}{\tau_f^{\alpha}c_{\alpha}}
+\frac{1}{\Gamma(1-\alpha)}\Big( \frac{v(x,t_n)}{\tau_f^{\alpha}}-\frac{v(x,0)}{ t_{n+1}^{\alpha}}
-\alpha\sum_{j=1}^{N_{\text{exp}}}\omega_{j}{\psi}_{j}[n]\Big)\\
\psi_j[n+1]&=\mathcal{H}(\psi_j[n]; \tau_f, v(x,t_n), v(x,t_{n+1}))\text{ for }n\geq 0 \text{ and }
\psi_j[0]=0 \text{ for all } j=1,\cdots,N_{\text{exp}}.
\end{aligned}
\end{equation}
Here, the propagation operator of the history information is
\begin{align}\label{eqn:history-propag}
\mathcal{H}(\psi_j[n]; \tau_f, v(x,t_n), v(x,t_{n+1})):=\mathrm{e}^{-\lambda_j\tau_f}\psi_j[n]+{c}_{1,j}^{\tau_f}v(x,t_{n})+{c}_{2,j}^{\tau_f}v(x,t_{n+1}).
\end{align}
\subsection{Multiscale-SOE-scheme}
In this section, we formulate the numerical solution based on the time integrator \eqref{eq:form} in the  multiscale ansatz space $V_{\text{ms},\ell}$. The multiscale-SOE-scheme aims to seeking $U_{\text{ms},\ell}^{n}\in V_{\text{ms},\ell}$ for $n=1,\cdots,M_{f}$, satisfying
\begin{equation}\label{eq:multiscale-his2}
	\left\{
	\begin{aligned} (U_{\text{ms},\ell}^{n},v_{\text{ms}})_D
&+c_{\alpha}\tau_f^{\alpha}a(U_{\text{ms},\ell}^{n},v_{\text{ms}})=
\alpha(U_{\text{ms},\ell}^{n-1},v_{\text{ms}})_D+\alpha(1-\alpha)\tau_f^{\alpha}\sum_{j=1}^{N_{\text{exp}}}\omega_{j}({\psi}_{j}[n-1],v_{\text{ms}})_D\\
&+\frac{1-\alpha}{(n+1)^{\alpha}}(U_{\text{ms},\ell}^{0},v_{\text{ms}})_D+c_{\alpha}\tau_f^{\alpha}(\mathcal{P}_{\ell}f(\cdot,t_n),v_{\text{ms}})_{D}
	\quad \text{for all} \, \, v_{\text{ms}}\in V_{\text{ms},\ell}\\
\psi_j[n]&=\mathcal{H}(\psi_j[n-1]; \tau_f, U_{\text{ms},\ell}^{n-1}, U_{\text{ms},\ell}^{n})\text{ for }n\geq 0 \text{ and }
\psi_j[0]=0 \text{ for all } j=1,\cdots,N_{\text{exp}}\\
	U_{\text{ms},\ell}^0&=\mathcal{P}_{\ell} u_0.
	\end{aligned}
	\right.
	\end{equation}
For the sake of the convergence analysis, we require a semidiscrete Galerkin scheme in the multiscale ansatz space $V_{\text{ms},\ell}$, defined by seeking $u_{\text{ms},\ell}(t)\in  V_{\text{ms},\ell}$, satisfying
\begin{equation}\label{eq:multiscale-hissemi}
	\left\{
	\begin{aligned}
({\partial}_{t}^{\alpha}u_{\text{ms},\ell}, v_{\text{ms}})_D+a(u_{\text{ms},\ell},v_{\text{ms}})
&=(f,v_{\text{ms}})_D\quad \text{for all} \, \, v_{\text{ms}}\in V_{\text{ms},\ell}\\
	u_{\text{ms}}(x,0)&=\mathcal{P}_{\ell} u_0.
	\end{aligned}
	\right.
	\end{equation}
\section{Wavelet-based Edge Multiscale Parareal Algorithm}\label{sec:WEMP}
We construct in this section the Wavelet-based Edge Multiscale Parareal (WEMP) Algorithm. To this end,
we first recap a few terminologies commonly appeared in parareal algorithm. The one step coarse solver on the coarse subinterval $(T^n,T^{n+1})$ is
\begin{align}
\begin{bmatrix} U^{n+1}_{\text{ms},\ell}	\\ \Phi^{n+1}_{\text{ms},\ell}\end{bmatrix}
&=\mathcal{G}^{{\text{ms},\ell}}(T^n, U^{n}_{\text{ms},\ell};\Phi^{n}_{\text{ms},\ell})
 \label{eq:coarseSolver}
\end{align}
which yields $U^{n+1}_{\text{ms},\ell}\in V_{\text{ms},\ell}$ as a coarse approximation to $u(\cdot,T^{n+1})$, provided with an approximation $U^{n}_{\text{ms},\ell}$ of $u(\cdot,T^n)$. Here, $\Phi^{n}_{\text{ms},\ell}$ serves as the input parameter to calculate $U^{n+1}_{\text{ms},\ell}$, which represents the memory effect from the previous time steps.

In matrix form, it reads
\begin{equation*}
\left\{
\begin{aligned}
U^{n+1}_{\text{ms},\ell}
&=\Big(\frac{\tau_c^{-\alpha}}{\Gamma(2-\alpha)}M_{\text{ms},\ell}+ A_{\text{ms},\ell}\Big)
^{-1}M_{\text{ms},\ell}
\Bigg(\tau_c^{-\alpha}\frac{\alpha}{\Gamma(2-\alpha)}U^{n}_{\text{ms},\ell}+\\
&\frac{\alpha}{\Gamma(1-\alpha)}\Theta^{T}\Phi^{n}_{\text{ms},\ell}+\frac{1}{\Gamma(1-\alpha)T^{n+1}} U^0_{\text{ms},\ell}+F^{n+1}_h
\Bigg) \\
\Phi^{n+1}_{\text{ms},\ell}&=D^{\tau_c}\Phi^{n}_{\text{ms},\ell}+\mathbf{C_1^{\tau_c}}U^{n}_{\text{ms},\ell}
+\mathbf{C_2^{\tau_c}}U^{n+1}_{\text{ms},\ell}.
\end{aligned}
\right.
\end{equation*}
Here, we use the following notations,
\begin{equation*}
\left\{
\begin{aligned}
\Phi^n_{\text{ms},\ell}&:=[\psi_1[n],\psi_2[n],\cdots,\psi_{\nexp}[n]]&&\\
\Theta&=\begin{bmatrix} \omega_1,\omega_2,\cdots,\omega_{N_{\text{exp}}}\end{bmatrix}^{T}&&\\
\mathbf{C_{i}^{\tau}}&=\begin{bmatrix} c_{i,1}^{\tau},c_{i,2}^{\tau},\cdots,c_{i,N_{\text{exp}}}^{\tau}\end{bmatrix}^{T}&&\text{ for }i=1,2\\
D^{\tau}&= \text{diag}\Big(\mathrm{e}^{-\lambda_1\tau}, \mathrm{e}^{-\lambda_2\tau},\cdots, \mathrm{e}^{-\lambda_{N_{\text{exp}}}\tau}\Big)&&\\
M_{\text{ms},\ell}&=\mathrm{\Psi}^{T}_{\text{ms},\ell}M_h\mathrm{\Psi}_{\text{ms},\ell}\text{ and }A_{\text{ms},\ell}=\mathrm{\Psi}^{T}_{\text{ms},\ell}A_h\mathrm{\Psi}_{\text{ms},\ell},&&
\end{aligned}
\right.
\end{equation*}
where $M_h$ and $A_h$ are the mass matrix and stiffness matrix corresponding to the discretization of the elliptic operator $-\nabla\cdot(\kappa\nabla \cdot)$ in the finite element space $V_h:=\text{span}\{\phi_1,\cdots,\phi_{\text{dof}_h}\}$, $\text{dof}_h$ denotes the dimension of $V_h$. $(F^{n+1}_h)_i:=\int_{D}f(\cdot,t_{n+1})\phi_i\dx$ for all $i=1,\cdots,\text{dof}_h$, $\mathrm{\Psi}_{\text{ms},\ell}$ denotes a matrix with columns composed of the coefficients of multiscale basis functions in $V_{\text{ms},\ell}$ in the finite element space $V_h$. Let $\text{dof}_{\ell}$ be the dimension of the multiscale space $V_{\text{ms},\ell}$. Note that $\Phi^{n}_{\text{ms},\ell}$ is vector-valued function with $N_{\text{exp}}$ components, and the length of each component is the dimension of the multiscale ansatz space $V_{\text{ms},\ell}$.

The fine solver on the coarse subinterval $(T^n,T^{n+1})$ is
\begin{align}
\begin{bmatrix} u_{\text{ms},\ell}^{n+1}\\\Phi_{\text{ms},\ell}^{n+1}\end{bmatrix}
=\mathcal{F}^{{\text{ms},\ell}}_f(T^n,U^{n}_{\text{ms},\ell};\Phi^{n}_{\text{ms},\ell}), \label{eq:fineSolver}
\end{align}
which is defined in analogous to the one step coarse solver \eqref{eq:coarseSolver}, but with a uniform discrete time step $\tau_f$.

Analogously, we can define the semidiscrete Galerkin approximation in $V_{\text{ms},\ell}$
	\begin{align}
\begin{bmatrix} v_{\text{ms},\ell}\\\Phi_1\end{bmatrix}
&=\mathcal{F}^{{\text{ms},\ell}}(T^n,U^{n}_{\text{ms},\ell};\Phi^{n}_{\text{ms},\ell}). \label{eq:exactSolver}
	\end{align}
Time stepping schemes with $\bar{\cdot}$ on top refers to the same scheme with a vanishing source term, for example, $\bar{\mathcal{G}}^{{\text{ms},\ell}}(T^n, U^n_{\text{ms},\ell};\Phi_0)$ denotes the one step coarse solver with $f=0$. $\bar{\mathcal{F}}^{{\text{ms},\ell}}(T^n,U^{n}_{\text{ms},\ell};\Phi^{n}_{\text{ms},\ell})$ and $\bar{\mathcal{F}}^{{\text{ms},\ell}}_{f}(T^n,U^{n}_{\text{ms},\ell};\Phi^{n}_{\text{ms},\ell})$ are defined analogously.
	
In the parareal algorithm \ref{algorithm:parareal}, the cheap coarse time stepping scheme $\mathcal{G}^{\text{ms},\ell}$ is sequentially utilized over the global time interval $I$ to provide a rough approximation to $u(\cdot,T^{n+1})$, while the expensive fine time stepping scheme $\mathcal{F}_{f}^{\text{ms},\ell}$ is applied in each subinterval $[T^n,T^{n+1}]$ for $n=0,1,\cdots, M_{c}-1$ in paralell to provide a more accurate solution. They do not agree with each other, and the discrepancy between them is denoted as the correction operator,
\begin{align}\label{eq:correct}
\mathcal{S}(T^{n},U^{n}_{\text{ms},\ell};\Phi^{n}_{\text{ms},\ell}):=\mathcal{F}^{{\text{ms},\ell}}_f(T^{n},
U^{n}_{\text{ms},\ell};\Phi^{n}_{\text{ms},\ell})_1-\mathcal{G}^{{\text{ms},\ell}}(T^n, U^{n}_{\text{ms},\ell};\Phi^{n}_{\text{ms},\ell})_1.
\end{align}
Here, ${\cdot}_1$ denotes taking the first component.

In comparison with recently proposed parareal algorithm for time-fractional diffusion problems \cite{WU2018135}, our new proposed algorithm corrects only the solution $u(\cdot,T^{n+1})$. Its associated history component $\Phi^{n+1}_{\text{ms},\ell}$ will be updated by \eqref{eqn:history-propag} using the corrected solution $u(\cdot,T^{n+1})$. Extensive numerical tests shows that our proposed algorithm converges to the fine-scale solution within three iterations.

Next, we are ready to present our main algorithm, i.e., Algorithm \ref{algorithm:wavelet+parareal}. Its main objective is to obtain a good approximation to the exact solution $u$ of Problem \eqref{eqn:pde} at the discrete coarse time step $T^n$ for $n=1,2,\cdots,M_c$, albeit that the time stepping scheme is built upon a coarse descretization. This relies on a parallel correction operator defined by \eqref{eq:correct}.

We can solve \eqref{eq:multiscale-his2} using the coarse-scale time stepping scheme as the initial guess $\begin{bmatrix}U^{n}_0\\\Phi^{n}_0\end{bmatrix}$ for $n=0,\cdots, M_{c}$. Given the iteration parameter $k$, we apply the local coarse solver \eqref{eq:coarseSolver} in Step 4 to obtain $U_k^{n+1}$. Using the coarse solution $U_k^{n}$ as the initial condition, the fine solver \eqref{eq:fineSolver} subsequently is used to calculate the fine solution $u_k^{n+1}$ in paralell on each local time subinterval $[T^n, T^{n+1}]$. Then we calculate the discrepancy between the coarse solution and the fine solution in Step 6 on each discrete coarse time point $T^n$ for $n=1,2,\cdots,M_{c}$, and denote it as $\mathcal{S}(T^{n-1},U^{n-1}_{k};\Phi^{n-1}_{\text{ms},\ell})$. Subsequently, this jump term is utilized in Step 7 to correct the coarse solution calculated by the global coarse solver \eqref{eq:coarseSolver}. This process will be performed iteratively until certain tolerance on the jump terms is satisfied.	
\section{Error analysis}\label{sec:convergence}
This section is concerned with the convergence analysis for the multiscale-SOE-scheme \eqref{eq:multiscale-his2} and the WEMP Algorithm \ref{algorithm:wavelet+parareal}.
\subsection{Error analysis for the multiscale-SOE-scheme \eqref{eq:multiscale-his2}}\label{subsec:xxxx}
To derive the error estimate for Problem \eqref{eq:multiscale-his2}, we start with a regularity property of the semidiscrete Galerkin approximation solution $u_{\text{ms},\ell}$ in Lemma \ref{assump} that is standard in the numerical analysis, see, e.g., \cite{MR3601002}. We then show in Lemma \ref{eq:approx-numerical-frac} that the local time stepping scheme based on summation of exponentials $\bar{\partial}_{t}^{\alpha}$ is a small perturbation of the L1 scheme $\tilde{\partial}_{t}^{\alpha}$.
Based upon this, we establish the stability estimate of Problem \eqref{eq:multiscale-his2}. Finally, the error estimate is presented in Theorem \ref{thm:conv11}.

	\begin{algorithm}[H]
		\caption{Wavelet-based Edge Multiscale Parareal (WEMP) Algorithm}
		\label{algorithm:wavelet+parareal}
	\KwData{Multiscale space $V_{\text{ms},\ell} $; initial data $u_0$; source term $f$; tolerance $\delta$ and initial solution $\begin{bmatrix}U^{n}_0\\\Phi^{n}_0\end{bmatrix}$ for $n=0,\cdots, M_{c}$.}
\KwResult{$U^n_k$ for $n=0,\cdots, M_{c}$.}
		\begin{algorithmic}[1]
			\State $k=1$.
			\Do
\State Set the initial value: $\begin{bmatrix}U^{0}_k\\\Phi^{0}_k\end{bmatrix}=\begin{bmatrix}\mathcal{P}_{\ell}u_{0}\\0\end{bmatrix}$.
			\State Parallel compute $U^{n}_k$ on each local time subinterval [$T^{n-1}$,$T^{n}$] for $n=1,\cdots, M_{c}$,
			\begin{align*}
			U^{n}_k&=\mathcal{G}^{\text{ms},\ell}(T^n, U^{n-1}_{k-1};\Phi^{n-1}_{k-1})_1.
			\end{align*}
			\State Parallel compute $ u_k^{n}$ on each local time subinterval [$T^{n-1}$,$T^{n}$] for $n=1,\cdots, M_{c}$,
	\begin{align*}
	u_k^{n}&=\mathcal{F}^{\text{ms},\ell}_f(T^{n-1},U_{k-1}^{n-1};\Phi_{k-1}^{n-1})_1.
\end{align*}
			\State Parallel compute the solution jumps for $n=1,\cdots, M_{c}$,
			\[
			\mathcal{S}(T^{n-1},U^{n-1}_{k-1};\Phi^{n-1}_{k-1}):= u_k^{n}- U_k^{n}.
			\]
			\State Sequentially compute the corrected coarse solutions $\begin{bmatrix}U^{n}_{k}\\\Phi^{n}_{k}\end{bmatrix}$
			for $n=1,\cdots, M_{c}$,
			\begin{align*}
			U^{n}_{k}&=\mathcal{S}(T^{n-1}, U_{k-1}^{n-1};\Phi_{k-1}^{n-1})
+\mathcal{G}^{\text{ms},\ell}(T^{n-1},  U_{k}^{n-1};\Phi_{k}^{n-1})_1\\
\Phi_{k}^{n}
&=\mathcal{H}(\Phi^{n-1}_{k};\tau_c, U^{n-1}_{k}, U^{n}_{k}).
			\end{align*}
			\State Estimate the error,
			\[{\rm err}:=1/{M_c}\sum\limits_{n=1}^{M_{c}}\|U_{k}^{n}-U_{k-1}^{n}\|_{\ell_2}.\]
			\State $k\gets k+1$.
\doWhile{err$>\delta$}
\end{algorithmic}
\end{algorithm}
We begin with establishing a certain regularity result of the semidiscrete Galerkin approximation solution $u_{\text{ms},\ell}$ with respect to the temporal variable $t$, which is similar to \cite[Assumption 3.7]{MR3601002}.
\begin{lemma}\label{assump}
Let $u_{\text{ms},\ell}$ be the solution to \eqref{eq:multiscale-hissemi}, then there holds
\begin{align*}
\normL{u_{\text{ms},\ell}(\cdot,t)}{D}&\leq \Const{0}(\|f\|_{W^{2,\infty}(I,L^2(D))}+\|u_0\|_{D})&&\\
\normL{\partial^{m}_{t}u_{\text{ms},\ell}(\cdot,t)}{D}&\leq\Const{0} t^{\sigma\alpha-m}\left(\|f\|_{W^{2,\infty}(I,L^2(D))}+|u_0|_{2\sigma}\right)\text{ for } m\geq 1&&.
\end{align*}
Here, and throughout this paper, $\Const{0}$ denotes a positive constant that is independent of $H$, $h$, $\tau_c$ and $\tau_f$ with varying value from context to context.
\end{lemma}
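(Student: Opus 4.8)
The plan is to treat \eqref{eq:multiscale-hissemi} as a genuinely finite-dimensional Caputo system but to prove the bounds \emph{spectrally}, so that every constant depends only on $\alpha$ and $T$ and not on $H,h,\ell$; a crude estimate through $\|\mathcal{L}_{\ell}\|$ would ruin the uniformity that is the whole point. First I would diagonalize the discrete problem. Let $\{(\lambda^{\ell}_j,\phi^{\ell}_j)\}_j$ be the generalized eigenpairs of $a(\cdot,\cdot)$ against $(\cdot,\cdot)_D$ on $V_{\text{ms},\ell}$, normalized so that $\{\phi^{\ell}_j\}$ is $L^2(D)$-orthonormal, $a(\phi^{\ell}_i,\phi^{\ell}_j)=\lambda^{\ell}_i\delta_{ij}$ and $\lambda^{\ell}_j\ge 0$ (legitimate by the self-adjointness and positive semidefiniteness of $\mathcal{L}_{\ell}$ recorded after \eqref{eq:projectionglo}). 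Writing $u_{\text{ms},\ell}(t)=\sum_j c_j(t)\phi^{\ell}_j$, the scheme decouples into scalar problems $\partial_t^{\alpha}c_j+\lambda^{\ell}_j c_j=f_j(t)$ with $c_j(0)=(\mathcal{P}_{\ell}u_0,\phi^{\ell}_j)_D$ and $f_j(t):=(f(\cdot,t),\phi^{\ell}_j)_D$, whose Mittag-Leffler representation is $c_j(t)=E_{\alpha}(-\lambda^{\ell}_j t^{\alpha})c_j(0)+\int_0^t(t-s)^{\alpha-1}E_{\alpha,\alpha}(-\lambda^{\ell}_j(t-s)^{\alpha})f_j(s)\,\mathrm{d}s$.

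The $m=0$ bound is then immediate from the uniform bounds $0\le E_{\alpha}(-x),E_{\alpha,\alpha}(-x)\le \Const{0}$ for $x\ge 0$: the homogeneous part contributes $\|\mathcal{P}_{\ell}u_0\|_D\le\|u_0\|_D$ by $L^2$-stability of the projection, and the convolution is controlled by $\Const{0}\int_0^t(t-s)^{\alpha-1}\mathrm{d}s\,\max_s\|f(\cdot,s)\|_D\le\Const{0}\alpha^{-1}T^{\alpha}\|f\|_{W^{2,\infty}(I,L^2(D))}$. For $m\ge 1$ the engine is the family of Mittag-Leffler derivative estimates $|\partial_t^m E_{\alpha}(-\lambda t^{\alpha})|\le \Const{0}\lambda^{\sigma}t^{\sigma\alpha-m}$, valid for $0\le\sigma\le 1$, $\lambda\ge 0$, $t>0$; for $m=1$ this follows from $\partial_t E_{\alpha}(-\lambda t^{\alpha})=-\lambda t^{\alpha-1}E_{\alpha,\alpha}(-\lambda t^{\alpha})$ together with $(\lambda t^{\alpha})^{1-\sigma}/(1+\lambda t^{\alpha})\le 1$, and for higher $m$ from the contour/Laplace representation of $E_{\alpha,\alpha}$. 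Applying this termwise, squaring and summing, the homogeneous contribution to $\partial_t^m u_{\text{ms},\ell}$ is bounded by $\Const{0}t^{\sigma\alpha-m}\big(\sum_j(\lambda^{\ell}_j)^{2\sigma}|c_j(0)|^2\big)^{1/2}=\Const{0}t^{\sigma\alpha-m}\|\mathcal{L}_{\ell}^{\sigma}\mathcal{P}_{\ell}u_0\|_D$.

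Two points then remain, and they are where the real work lies. The first is the uniform bound $\|\mathcal{L}_{\ell}^{\sigma}\mathcal{P}_{\ell}u_0\|_D\lesssim|u_0|_{2\sigma}=\|\mathcal{L}^{\sigma}u_0\|_D$; I would establish it by interpolation between the endpoints $\sigma=0$ (the $L^2$-stability of $\mathcal{P}_{\ell}$) and $\sigma=1$, exploiting the self-adjoint positive definiteness of $\mathcal{L}_{\ell}$ on $V_{\text{ms},\ell}$ and the operator identity $\mathcal{L}_{\ell}^{-1}=\mathcal{R}_{\ell}\mathcal{L}^{-1}$, which ties the discrete fractional powers back to the continuous ones uniformly in the discretization. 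The second, and the part I expect to be the main obstacle, is the source term for $m\ge 1$: differentiating the convolution naively produces the nonintegrable kernel $t^{\alpha-1-m}$, so I would instead Taylor-expand $f(\cdot,s)=f(\cdot,t)+(s-t)\partial_s f(\cdot,t)+r(\cdot,s)$ about $s=t$ and integrate by parts in $s$, letting the explicit primitives of $(t-s)^{\alpha-1}E_{\alpha,\alpha}$ absorb the low-order terms while the second-order remainder $r$, estimated in $\|f\|_{W^{2,\infty}(I,L^2(D))}$, cancels the singularity at $s=t$. This is precisely where the $W^{2,\infty}$ hypothesis is used and why only $m\le 2$ need be tracked. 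Combining the homogeneous and inhomogeneous contributions yields the two asserted estimates with $\Const{0}=\Const{0}(\alpha,T)$ independent of $H,h,\tau_c,\tau_f$.
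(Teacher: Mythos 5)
The paper does not actually prove this lemma: the displayed proof environment is commented out, and the surrounding text simply declares the result ``standard'' and defers to \cite[Assumption 3.7, Remark 3.8]{MR3601002}. Your spectral route --- diagonalizing $a(\cdot,\cdot)$ on $V_{\text{ms},\ell}$, writing each mode via Mittag--Leffler functions, and using $|\tfrac{d}{dt}E_{\alpha}(-\lambda t^{\alpha})|=\lambda t^{\alpha-1}|E_{\alpha,\alpha}(-\lambda t^{\alpha})|\le \Const{0}\lambda^{\sigma}t^{\sigma\alpha-1}$ together with moving time derivatives onto $f$ in the Duhamel term --- is exactly the standard argument that underlies that citation, and the parts of it you spell out (the $m=0$ bound, the homogeneous $m\ge1$ bound, the treatment of the source term using $f\in W^{2,\infty}(I;L^2(D))$) are sound. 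Your remark that the source term only yields $m\le 2$ under this hypothesis is correct and is all that the later analysis (the L1 truncation error in Lemma \ref{eq:approx-numerical-frac}) actually uses, even though the lemma is nominally stated for all $m\ge 1$.

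The one genuine gap is the step you yourself identify as ``where the real work lies,'' namely the uniform bounds $\normL{\mathcal{P}_{\ell}u_0}{D}\lesssim \normL{u_0}{D}$ and $\normL{\mathcal{L}_{\ell}^{\sigma}\mathcal{P}_{\ell}u_0}{D}\lesssim |u_0|_{2\sigma}$, and your proposed fix does not close it. The operator $\mathcal{P}_{\ell}$ in \eqref{eq:projectionglo} is \emph{not} the $L^2(D)$-orthogonal projection nor the Ritz projection: it is a partition-of-unity sum $\sum_i\chi_i\mathcal{P}_{i,\ell}$ of $L^2(\partial\omega_i)$-projections of boundary traces followed by local harmonic extension, so neither endpoint of your interpolation is free. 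In particular, the identity $\mathcal{L}_{\ell}^{-1}=\mathcal{R}_{\ell}\mathcal{L}^{-1}$ that you invoke for the $\sigma=1$ endpoint concerns the Ritz projector $\mathcal{R}_{\ell}$, not $\mathcal{P}_{\ell}$, and the paper records only the approximation properties \eqref{eq:glo-energy1}--\eqref{eq:glo-energy2}, no stability of $\mathcal{P}_{\ell}$ in $L^2(D)$ or $\dot H^{2}(D)$ uniformly in $H$ and $\ell$. To complete the proof one must either (i) prove such a stability lemma for $\mathcal{P}_{\ell}$ (plausible from the trace/extension structure and Assumption \ref{ass:coeff}, but it is a separate piece of work involving the $H^{-1}$-scaling of $|\nabla\chi_i|$), or (ii) note that the constant in the lemma is allowed to depend on the discrete initial datum only through $\normL{\mathcal{P}_{\ell}u_0}{D}$ and $\normL{\mathcal{L}_{\ell}^{\sigma}\mathcal{P}_{\ell}u_0}{D}$ and restate the bound accordingly, or (iii) replace $\mathcal{P}_{\ell}u_0$ by $\mathcal{R}_{\ell}u_0$ or the $L^2$-projection in \eqref{eq:multiscale-hissemi}, for which the required stability is classical. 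Everything else in your argument goes through.
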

Next we present the truncation error estimate of our proposed time integrator $\bar{\partial}_{\tau_f}^{\alpha}$:
\begin{lemma}\label{eq:approx-numerical-frac}
Let $u_{\text{ms},\ell}$ be the solution to \eqref{eq:multiscale-hissemi}, then there holds
\begin{align*}
\normL{\partial_{t}^{\alpha}u_{\text{ms},\ell}(\cdot,t_n)
-\bar{\partial}_{\tau_f}^{\alpha}u_{\text{ms},\ell}(\cdot,t_{n})}{D}
\leq \Const{0}
&\left(\min\{(n-1)^{-\alpha},1\}\tau_f^{-(1-\sigma)\alpha}+\epsilon \sqrt{t_{n-1}}\right)\\
&\times\left(|u_0|_{2\sigma}+\|f\|_{W^{2,\infty}(I,L^2(D))}\right).
\end{align*}
\end{lemma}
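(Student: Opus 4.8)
The plan is to insert the pure L1 scheme $\tilde{\partial}_{\tau_f}^{\alpha}u_{\text{ms},\ell}$ as an intermediary between the exact Caputo derivative and the exponential-sum integrator, and to split the error by the triangle inequality as
\[
\normL{\partial_t^{\alpha}u_{\text{ms},\ell}(\cdot,t_n)-\bar{\partial}_{\tau_f}^{\alpha}u_{\text{ms},\ell}(\cdot,t_n)}{D}
\le
\underbrace{\normL{\partial_t^{\alpha}u_{\text{ms},\ell}(\cdot,t_n)-\tilde{\partial}_{\tau_f}^{\alpha}u_{\text{ms},\ell}(\cdot,t_n)}{D}}_{=:\,\roma}
+\underbrace{\normL{\tilde{\partial}_{\tau_f}^{\alpha}u_{\text{ms},\ell}(\cdot,t_n)-\bar{\partial}_{\tau_f}^{\alpha}u_{\text{ms},\ell}(\cdot,t_n)}{D}}_{=:\,\romb}.
\]
Term $\roma$ is the classical consistency error of the L1 scheme for a function whose time derivatives obey the singular estimate $\normL{\partial_t^m u_{\text{ms},\ell}(\cdot,t)}{D}\le\Const{0}\,t^{\sigma\alpha-m}(\cdots)$ of Lemma \ref{assump}, while $\romb$ isolates only the perturbation caused by replacing the exact convolution kernel $(t_n-s)^{-1-\alpha}$ by the exponential sum of Corollary \ref{corollary:soe}. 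I expect $\roma$ to generate the first summand $\min\{(n-1)^{-\alpha},1\}\tau_f^{-(1-\sigma)\alpha}$ and $\romb$ the second summand $\epsilon\sqrt{t_{n-1}}$, both carrying the common factor $(|u_0|_{2\sigma}+\|f\|_{W^{2,\infty}(I,L^2(D))})$ from the regularity bound.

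For $\roma$ I would write the consistency error in integral form, $\partial_t^{\alpha}u_{\text{ms},\ell}(\cdot,t_n)-\tilde{\partial}_{\tau_f}^{\alpha}u_{\text{ms},\ell}(\cdot,t_n)=\frac{1}{\Gamma(1-\alpha)}\int_0^{t_n}(t_n-s)^{-\alpha}\bigl(\partial_s u_{\text{ms},\ell}(s)-\partial_s\bar{u}(s)\bigr)\,ds$, where $\bar{u}$ is the piecewise linear interpolant of $u_{\text{ms},\ell}$ on the fine grid, and split the range into the first subinterval $[0,t_1]$ and the remainder $[t_1,t_n]$. On $[0,t_1]$ the derivative blows up like $s^{\sigma\alpha-1}$, so I bound the kernel by $(t_n-s)^{-\alpha}\le t_{n-1}^{-\alpha}$ and use $\int_0^{t_1}\|\partial_s u_{\text{ms},\ell}\|_D\,ds\lesssim\tau_f^{\sigma\alpha}$ from Lemma \ref{assump}, giving a contribution $\lesssim t_{n-1}^{-\alpha}\tau_f^{\sigma\alpha}=\min\{(n-1)^{-\alpha},1\}\tau_f^{-(1-\sigma)\alpha}$ (the cap at $1$ occurring for $n=2$). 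On the remaining intervals I bound $\|\partial_s u_{\text{ms},\ell}-\partial_s\bar{u}\|_D\lesssim\tau_f\sup\|\partial_s^2 u_{\text{ms},\ell}\|_D\lesssim\tau_f\,t_k^{\sigma\alpha-2}$ and estimate the weighted sum $\tau_f^2\sum_k(t_n-t_k)^{-\alpha}t_k^{\sigma\alpha-2}$; since $\sigma\alpha-2<-1$ the sum concentrates near the origin and again produces $\lesssim t_{n-1}^{-\alpha}\tau_f^{\sigma\alpha}$, so both pieces collapse to the first term of the statement. This singular L1 analysis near $t=0$ is the part I expect to be the main obstacle: one must retain the $t_{n-1}^{-\alpha}$ decay in $t_n$ while controlling the near-origin blow-up, which is exactly where the full strength of the regularity in Lemma \ref{assump} is needed.

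For $\romb$ I would exploit that $\tilde{\partial}_{\tau_f}^{\alpha}u_{\text{ms},\ell}$, after integration by parts of its history part, takes precisely the algebraic form of \eqref{eq:form1}, namely the boundary terms $\tau_f^{-\alpha}\bar{u}(t_{n-1})-t_n^{-\alpha}\bar{u}(t_0)$ together with $-\tfrac{\alpha}{\Gamma(1-\alpha)}\int_0^{t_{n-1}}(t_n-s)^{-1-\alpha}\bar{u}(s)\,ds$, whereas $\bar{\partial}_{\tau_f}^{\alpha}u_{\text{ms},\ell}$ has the identical expression with $\sum_{j=1}^{\nexp}\omega_j\mathrm{e}^{-\lambda_j(t_n-s)}$ in place of the exact kernel. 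Hence every boundary contribution cancels in the difference and only the kernel error survives:
\[
\romb=\frac{\alpha}{\Gamma(1-\alpha)}\normL{\int_0^{t_{n-1}}\Big((t_n-s)^{-1-\alpha}-\sum_{j=1}^{\nexp}\omega_j\mathrm{e}^{-\lambda_j(t_n-s)}\Big)\bar{u}(s)\,ds}{D}.
\]
Because $t_n-s\ge\tau_f$ over the whole range, Corollary \ref{corollary:soe} bounds the bracket by $\epsilon$ pointwise; applying Cauchy--Schwarz in $s$ together with the uniform bound $\normL{u_{\text{ms},\ell}(\cdot,s)}{D}\le\Const{0}(\cdots)$ of Lemma \ref{assump} then yields a contribution proportional to $\epsilon\sqrt{t_{n-1}}(\cdots)$, the second term of the statement. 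Combining the estimates for $\roma$ and $\romb$ and factoring out the common regularity factor completes the argument.
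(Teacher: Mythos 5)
Your proposal is correct and follows essentially the same route as the paper, which likewise splits the error by the triangle inequality through the L1 scheme $\tilde{\partial}_{\tau_f}^{\alpha}$, invokes the known nonsmooth-data consistency estimate for the first term, and treats the second term as a direct kernel-perturbation calculation; you simply supply the details the paper leaves implicit. The only wrinkle is that your Cauchy--Schwarz step for the second term most naturally yields $\epsilon\, t_{n-1}$ rather than $\epsilon\sqrt{t_{n-1}}$, but since $t_{n-1}\leq \sqrt{T}\sqrt{t_{n-1}}$ this is absorbed into the generic constant and does not affect the stated bound.
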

\begin{proof}
An application of the triangle inequality implies
\begin{align*}
\normL{\partial_{t}^{\alpha}u_{\text{ms},\ell}(\cdot,t_n)-
\bar{\partial}_{\tau_f}^{\alpha}u_{\text{ms},\ell}(\cdot,t_n)}{D}
&\leq\normL{\partial_{t}^{\alpha}u_{\text{ms},\ell}(\cdot,t_n)
-\tilde{\partial}_{\tau_f}^{\alpha}u_{\text{ms},\ell}(\cdot,t_n)}{D}\\
&+\normL{\tilde{\partial}_{\tau_f}^{\alpha}u_{\text{ms},\ell}(\cdot,t_n)
-\bar{\partial}_{\tau_f}^{\alpha}u_{\text{ms},\ell}(\cdot,t_n)}{D}.
\end{align*}
The first term can be estimated by \cite[Lemma 3.9]{MR3601002}, and a direct calculation leads to the estimate of the second term.
\end{proof}	
\begin{lemma}[Stability of Problem \eqref{eq:multiscale-his2}.]\label{lemma:stability}
For any given positive parameter $\eta\in (0,1)$, let $\epsilon$ be sufficiently small, s.t.,
\begin{align}\label{assum:epsilon}
\epsilon\leq \min\Big\{\frac{\alpha M_{f}^{\alpha}}{T^{1+\alpha}}\eta, \frac{1}{2T^{1+\alpha}}\Big\}.
\end{align}
Then it holds
\begin{align*}
\tau_f^{\alpha/2} |U_{\text{ms},\ell}^{n}|_1+\normL{U_{\text{ms},\ell}^{n}}{D}\lesssim
\normL{\mathcal{P}_{\ell} u_0}{D}+(1+\eta)c_{\alpha}\tau_f^{\alpha}\sum\limits_{k=0}^{n-1}\normL{\mathcal{P}_{\ell}f(\cdot,t_{k+1})}{D}.
\end{align*}
Moreover, there holds for any $\sigma\in (0,1/2)$ that
\begin{align*}
\tau_f^{\sigma\alpha} |U_{\text{ms},\ell}^{n}|_{2\sigma}
\lesssim
\normL{\mathcal{P}_{\ell} u_0}{D}+(1+\eta)c_{\alpha}\tau_f^{\alpha}\sum\limits_{k=0}^{n-1}\normL{\mathcal{P}_{\ell}f(\cdot,t_{k+1})}{D}.
\end{align*}
\end{lemma}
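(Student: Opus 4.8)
The plan is to run a direct energy argument on the scheme \eqref{eq:multiscale-his2}, treating it as a small perturbation of the pure L1--Galerkin scheme. Throughout write $U^n:=U_{\text{ms},\ell}^n$. First I would test \eqref{eq:multiscale-his2} with $v_{\text{ms}}=U^n$, which reduces the equation to the scalar identity
\begin{align*}
\normL{U^n}{D}^2+c_\alpha\tau_f^\alpha|U^n|_1^2
&=\alpha(U^{n-1},U^n)_D+\tfrac{1-\alpha}{(n+1)^\alpha}(U^0,U^n)_D\\
&\quad+\alpha(1-\alpha)\tau_f^\alpha\sum_{j=1}^{\nexp}\omega_j(\psi_j[n-1],U^n)_D
+c_\alpha\tau_f^\alpha(\mathcal{P}_{\ell}f(\cdot,t_n),U^n)_D.
\end{align*}
The coercive term $c_\alpha\tau_f^\alpha|U^n|_1^2=(\tau_f^{\alpha/2}|U^n|_1)^2$ sits on the left and already supplies the $H^1_\kappa$ part of the claim, so the task is to bound the memory terms on the right by a non-amplifying combination of $\{\normL{U^m}{D}\}_{m<n}$ plus the forcing.

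Next I would eliminate the auxiliary variables $\psi_j[n-1]$ in favour of the solution history by unrolling the recurrence \eqref{eqn:history-propag}, giving $\psi_j[n-1]=\sum_{m=1}^{n-1}\mathrm{e}^{-\lambda_j\tau_f(n-1-m)}\big(c_{1,j}^{\tau_f}U^{m-1}+c_{2,j}^{\tau_f}U^m\big)$ with $c_{i,j}^{\tau_f}$ as in \eqref{eq:xxxxx}. Substituting, the history term becomes a double sum whose effective weight on each past state $U^m$ is $\sum_j\omega_j\mathrm{e}^{-\lambda_j\tau_f(n-1-m)}c_{i,j}^{\tau_f}$. The point is that these weights, together with the coefficients $\alpha$ and $\tfrac{1-\alpha}{(n+1)^\alpha}$, reproduce the genuine L1 kernel weights $\{b_j\}$ up to a remainder governed by the exponential-sum error. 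For the L1 part I would invoke the monotonicity and positivity of $\{b_j\}$ established in the coefficient lemma above (so that, after moving $U^n$ to the left, the memory weights are nonnegative and sum to at most one), making the homogeneous iteration a sub-convex combination that does not amplify in $L^2$; Cauchy--Schwarz, Young's inequality and a discrete fractional Grönwall induction on $n$ then close the $L^2$ and energy bounds. For the perturbation I would use Lemma \ref{lem:222xxx}, which controls $\big|\sum_j\omega_jc_{i,j}^{\tau_f}\big|$ by $\mathrm{e}^{-\gamma\tau_f}(\tau_f^{-\alpha}(1-\alpha)^{-1}+\epsilon\tau_f/2)$; the principal part matches the L1 weight while the $\epsilon$-part is the genuine error, and summing it over the $\mathcal{O}(M_f)$ past steps is exactly where the smallness condition \eqref{assum:epsilon} is consumed, yielding the multiplicative factor $(1+\eta)$ on the forcing sum.

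Finally, the fractional-order estimate follows from the first one by interpolation. Directly from the spectral definition \eqref{def:dotH} and Hölder's inequality in the eigen-sum one has $|v|_{2\sigma}\le\normL{v}{D}^{1-2\sigma}|v|_1^{2\sigma}$ for $2\sigma\in(0,1)$, i.e. $\sigma\in(0,1/2)$. Multiplying by $\tau_f^{\sigma\alpha}$ and writing $\tau_f^{\sigma\alpha}|v|_1^{2\sigma}=(\tau_f^{\alpha/2}|v|_1)^{2\sigma}$ gives
\begin{align*}
\tau_f^{\sigma\alpha}|U^n|_{2\sigma}\le\normL{U^n}{D}^{1-2\sigma}\big(\tau_f^{\alpha/2}|U^n|_1\big)^{2\sigma}\le\normL{U^n}{D}+\tau_f^{\alpha/2}|U^n|_1,
\end{align*}
whose right-hand side is precisely the left-hand side of the first estimate and therefore inherits the same bound.

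I expect the main obstacle to be the uniform-in-$n$ (equivalently, long-time) control of the exponential-sum perturbation. Because the memory sum runs over all $\mathcal{O}(M_f)$ previous steps, a crude bound would let the $\epsilon$-error accumulate and ruin stability as $T\to\infty$. The whole argument therefore hinges on combining the positivity and monotonicity of the true L1 weights (so the principal memory contribution is a non-amplifying convex combination) with the sharply calibrated smallness \eqref{assum:epsilon} of $\epsilon$ furnished by Corollary \ref{corollary:soe}, so that the accumulated perturbation contributes only the harmless factor $(1+\eta)$ rather than a term growing with $T$.
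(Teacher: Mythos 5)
Your proposal is correct and follows essentially the same route as the paper: both rewrite the SOE time integrator as the L1 scheme plus a perturbation whose coefficients are $\mathcal{O}(\tau_f^{1+\alpha}\epsilon)$, use the monotonicity/positivity of the L1 weights $b_j$ so the principal memory part does not amplify, absorb the accumulated perturbation via the smallness condition \eqref{assum:epsilon} into the $(1+\eta)$ factor, and close with an energy induction in the spirit of the cited stability theorem. Your explicit interpolation step $|v|_{2\sigma}\le\normL{v}{D}^{1-2\sigma}|v|_1^{2\sigma}$ for $\sigma\in(0,1/2)$ is a clean way to obtain the second estimate from the first and is consistent with the paper's stated range of $\sigma$.
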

\begin{proof}
On the one hand, we can derive by \eqref{eqn:cont-decom} and \eqref{eq:form1},
\begin{align*}
 \bar{\partial}_{\tau_f}^{\alpha} U_{\text{ms},\ell}^{n+1}= \tilde{\partial}_{\tau_f}^{\alpha}U_{\text{ms},\ell}^{n+1}
+\frac{1}{\tau_f^{\alpha} c_{\alpha}}\sum_{j=1}^{n}\Big(  a_{n+1,j}U_{\text{ms},\ell}^{j-1}+b_{n+1,j}U_{\text{ms},\ell}^{j} \Big)
\end{align*}
with
\[
|a_{n+1,j}|, |b_{n+1,j}|\leq \frac{1}{2} \alpha(1-\alpha)\tau_{f}^{1+\alpha}\epsilon.
\]
Together with \eqref{eq:l1scheme}, this yields
\begin{align*}
\bar{\partial}_{\tau_f}^{\alpha} U_{\text{ms},\ell}^{n+1}=\frac{1}{\tau_f^{\alpha} c_{\alpha}}\Bigg( v_{n+1}-
\sum_{j=1}^{n}\Big(  b_{j-1}-b_{j}-\xi_j \Big)U_{\text{ms},\ell}^{n+1-j}\Bigg)
\end{align*}
with
\[
|\xi_j|\leq \alpha(1-\alpha)\tau_{f}^{1+\alpha}\epsilon.
\]
On the other hand, \cite[Lemma 3.4]{MR3601002} implies
\begin{align*}
\sum\limits_{j=1}^{n}(b_{j-1}-b_j)(n+1-j)^{\alpha-1}\leq (n+1)^{\alpha-1},
\end{align*}
 which, combining with \eqref{assum:epsilon}, yields
\begin{align*}
\sum\limits_{j=1}^{n}(b_{j-1}-b_j-\xi_j)(n+1-j)^{\alpha-1}
&\leq (n+1)^{\alpha-1}
+\tau_{f}^{1+\alpha}\epsilon \sum\limits_{j=1}^{n}(n+1-j)^{\alpha-1}\\
&\leq (n+1)^{\alpha-1}
+\tau_{f}^{1+\alpha}\epsilon \int_{0}^{n}(n+1-x)^{\alpha-1}\mathrm{d}x\\
&= (n+1)^{\alpha-1}
+\tau_{f}^{1+\alpha}\epsilon \alpha^{-1}(n+1)^{\alpha}\\
&\leq (1+\eta)(n+1)^{\alpha-1}.
\end{align*}
Then the desired result follows from an energy argument by mathematical induction analogous to the proof to \cite[Theorem 3.5]{MR3601002}.
\end{proof}
Note that the stability of Problem \eqref{eq:multiscale-his2} using L2 time stepping scheme and summation of exponential approximation was derived in \cite{zhu2019fast}, which imposed a more restrictive constraint on $\epsilon$ than our proposed requirement \eqref{assum:epsilon}.

Then a combination of Lemma \ref{eq:approx-numerical-frac} and the approximation properties \eqref{eq:glo-energy1} and \eqref{eq:glo-energy2} implies
\begin{theorem}[Convergence rate of Problem \eqref{eq:multiscale-his2}.]
\label{thm:conv11}
Assume that \eqref{assum:epsilon} holds. Let $f\in W^{2,\infty}(I;L^2(D))$ and $u_0\in \dot{H}^{2\sigma}(D)$ with $0<\sigma\leq 1$, then there holds
\begin{align*}
\normL{u(\cdot,t_n)-U_{\text{ms},\ell}^{n}}{D}
&\lesssim \Big(\tau_f^{\sigma\alpha}+\eta(H,\ell)|\log \eta(H,\ell)|\Big)\|f\|_{W^{2,\infty}(I;L^2(D))}\\
&+\Big(\tau_f^{\sigma\alpha}+\eta(H,\ell)|\log \eta(H,\ell)| t_n^{-\alpha(1-\sigma)}\Big)|u_0|_{2\sigma}.
\end{align*}
\end{theorem}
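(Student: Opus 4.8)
The plan is to insert the semidiscrete Galerkin approximation $u_{\text{ms},\ell}$ of \eqref{eq:multiscale-hissemi} and split by the triangle inequality,
\[
\normL{u(\cdot,t_n)-U_{\text{ms},\ell}^{n}}{D}\leq \normL{u(\cdot,t_n)-u_{\text{ms},\ell}(\cdot,t_n)}{D}+\normL{u_{\text{ms},\ell}(\cdot,t_n)-U_{\text{ms},\ell}^{n}}{D}.
\]
I would then bound the first (purely spatial, multiscale) contribution by $\eta(H,\ell)|\log\eta(H,\ell)|$ via \eqref{eq:glo-energy1}--\eqref{eq:glo-energy2}, and the second (purely temporal, L1/SOE) contribution by $\tau_f^{\sigma\alpha}$ via the consistency bound of Lemma \ref{eq:approx-numerical-frac}.

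For the spatial term, I would use the Laplace/resolvent (Mittag--Leffler) representation of both $u$ and $u_{\text{ms},\ell}$, so that the difference is driven by $\mathcal{L}^{-1}-\mathcal{L}_{\ell}^{-1}$ acting on $f$ and on $u_0$ under the inverse-Laplace contour. Feeding the energy estimate \eqref{eq:glo-energy1} and the $L^2$ estimate \eqref{eq:glo-energy2} into this representation converts the resolvent discrepancy into $\eta(H,\ell)$. The logarithmic factor $|\log\eta(H,\ell)|$ arises from gluing the two regimes along the contour (equivalently, the time integral): where the time-singular kernel dominates one uses \eqref{eq:glo-energy2}, elsewhere \eqref{eq:glo-energy1}, and the crossover integral $\int_{0}^{t_n}\min\{1,\eta(H,\ell)/s\}\,\mathrm{d}s\sim \eta(H,\ell)|\log\eta(H,\ell)|$ produces the logarithm. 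For the initial datum the smoothing of the fractional solution operator contributes the additional weight $t_n^{-\alpha(1-\sigma)}$ matching $|u_0|_{2\sigma}$, whereas the $W^{2,\infty}$-regular forcing yields a $t_n$-uniform factor.

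For the temporal term, set $e^n:=u_{\text{ms},\ell}(\cdot,t_n)-U_{\text{ms},\ell}^{n}$. Since $u_{\text{ms},\ell}$ satisfies \eqref{eq:multiscale-hissemi} exactly in time and both schemes share the initial value $\mathcal{P}_{\ell}u_0$, the sequence $e^n$ solves the fully discrete scheme \eqref{eq:multiscale-his2} with vanishing initial data and with forcing equal to the consistency error $\rho^n:=\bar{\partial}_{\tau_f}^{\alpha}u_{\text{ms},\ell}(\cdot,t_n)-\partial_{t}^{\alpha}u_{\text{ms},\ell}(\cdot,t_n)$. I would represent $e^n$ through the discrete fractional Duhamel principle, using the stability estimate of Lemma \ref{lemma:stability} to guarantee well-posedness and the decay $\|\bar{E}_m\|\lesssim \tau_f\,t_m^{\alpha-1}$ of the associated discrete solution operator, and then insert the consistency bound of Lemma \ref{eq:approx-numerical-frac}.

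The hardest step --- and the one deserving the most care --- is the resulting discrete convolution. Rewriting the L1 part of the consistency error as $\min\{(n-1)^{-\alpha},1\}\tau_f^{-(1-\sigma)\alpha}=\tau_f^{\sigma\alpha}t_{n-1}^{-\alpha}$ for $n\geq 2$, the accumulated temporal error behaves like the discrete Beta integral
\[
\tau_f^{\sigma\alpha}\sum_{j=1}^{n}\tau_f\,t_{n-j+1}^{\alpha-1}\,t_{j-1}^{-\alpha}\lesssim \tau_f^{\sigma\alpha}\int_{0}^{t_n}(t_n-s)^{\alpha-1}s^{-\alpha}\,\mathrm{d}s=\tau_f^{\sigma\alpha}B(\alpha,1-\alpha),
\]
whose endpoint $j=1$ is regularized by the min-cap and whose value is bounded uniformly in $t_n$; this cancellation of the $t^{-\alpha}$ consistency singularity against the $t^{\alpha-1}$ solution-operator decay is exactly what prevents a spurious $t_n^{1-\alpha}$ growth and delivers the clean $\tau_f^{\sigma\alpha}$ rate. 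The remaining SOE part $\epsilon\sqrt{t_{n-1}}$ is harmless: summed against the solution operator it contributes $\mathcal{O}(\epsilon)$ times a power of $T$, which the accuracy requirement \eqref{assum:epsilon} renders subdominant to $\tau_f^{\sigma\alpha}$. Making the convolution estimate rigorous via a discrete fractional Gr\"onwall argument in the spirit of \cite{MR3601002}, and adding the spatial and temporal bounds, completes the proof.
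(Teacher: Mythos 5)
Your proposal is correct and follows essentially the same route as the paper: the paper's entire proof is the single line ``analogous to \cite[Theorems 3.10 and 3.11]{MR3601002}'', and your plan --- splitting off the semidiscrete spatial error (resolvent representation plus \eqref{eq:glo-energy1}--\eqref{eq:glo-energy2}, with the crossover along the contour producing the $\eta(H,\ell)|\log\eta(H,\ell)|$ factor and the smoothing weight $t_n^{-\alpha(1-\sigma)}$ on $|u_0|_{2\sigma}$) and then bounding the fully-discrete temporal error by convolving the consistency bound of Lemma \ref{eq:approx-numerical-frac} against the $t^{\alpha-1}$ decay of the discrete solution operator, with the SOE perturbation controlled through \eqref{assum:epsilon} --- is precisely the argument of those cited theorems adapted to the multiscale/SOE setting. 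You in fact supply substantially more detail than the paper does, including the key cancellation of the $t^{-\alpha}$ consistency singularity against the $t^{\alpha-1}$ kernel in the discrete Beta integral.
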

\begin{proof}
The proof is analogous to \cite[Theorems 3.10 and 3.11]{MR3601002}.
\end{proof}
\subsection{Convergence analysis for WEMP Algorithm \ref{algorithm:wavelet+parareal}}
This section is concerned with the convergence of  our main algorithm (Algorithm \ref{algorithm:wavelet+parareal}). We will first derive the approximation properties of the one step coarse time stepping scheme $\mathcal{G}^{{\rm{ms},\ell}}$ in the first component and the jump operator $\mathcal{S}$ in Lemma \ref{lemma:parareal}. The convergence of Algorithm \ref{algorithm:wavelet+parareal} is derived by the mathematical induction on the solution itself, based upon which we can derive the convergence rate of the history information using \eqref{eqn:history-propag}. This will lead to the inequality \eqref{eq:monster1111111}, and then the proof follows from \cite[Theorem 4.1]{li2020wavelet}.

To start with, we first introduce a new norm $\vertiii{\cdot}$ to measure the effect of the history component.
\begin{definition}\label{defn:tnorm}
Let $\Psi:=\begin{bmatrix}\psi_1,\cdots,\psi_{N_{\text{exp}}} \end{bmatrix}$ with each component $\psi_j\in L^2(D)$. Then we define its associated history contribution by
$v_{\rm{his}}:=\sum_{j=1}^{N_{\text{exp}}}\omega_{j}\psi_{j}$.
This vector $\Psi$ is measured by the following norm
\[
\vertiii{\Psi}:=\tau_c^{\alpha}\normL{v_{\rm{his}}}{D}.
\]
\end{definition}
We next prove the boundedness and Lipschitz continuity properties of the first component of the coarse solver $\mathcal{G}^{{\text{ms},\ell}}$ and the jump operator $\mathcal{S}$ in the multiscale space $V_{\text{ms},\ell}$:
\begin{lemma}\label{lemma:parareal}
Let $\begin{bmatrix}U^{0}_{i}\\\Phi^{0}_{i}\end{bmatrix}=
\begin{bmatrix}v_0\\0\end{bmatrix}$ for $i=1,2$ with $v_0\in V_{\rm{ms},\ell}$ being the initial data, and let $v_i,\,\phi_{i,j}\in V_{\rm{ms},\ell}$ with $i=1,2$ and $j=1,2,\cdots,N_{\rm{exp}}$. Denote $\Phi_i:=[\phi_{i,1},\cdots,\phi_{i,N_{\text{exp}}}]$ for $i=1,2$ that represents the history information at the previous iteration. Then for all $n\in \{1,\cdots,M_{c}-1\}$, the following properties hold.
\begin{itemize}
\item[1.]The one step coarse solver $\mathcal{G}^{{\rm{ms},\ell}}$ is Lipschitz in $V_{\rm{ms},\ell}$. There holds
\begin{align*}
\quad\normL{\mathcal{G}^{{\rm{ms},\ell}}(T^n,v_1;\Phi_{1})_1-\mathcal{G}^{{\rm{ms},\ell}}(T^n, v_2;\Phi_{2})_1}{D}
&\leq \alpha\normL{v_1-v_2}{D}+\alpha(1-\alpha)\vertiii{\Phi_1-\Phi_2}.
\end{align*}
\item[2.] The jump operator $\mathcal{S}$ has the following approximation property.
For any $\sigma\in (0,1)$, it holds
\begin{align}\label{eq:app-jump}
\quad&\normL{\mathcal{S}(T^n,v_1;\Phi_1)
-\mathcal{S}(T^n, v_2;\Phi_2)}{D}\leq \Const{0}\tau_c^{\sigma\alpha}\Big({|v_1-v_2|}_{2\sigma}+\alpha\vertiii{\Phi_1-\Phi_2}\Big).
\end{align}
\end{itemize}
\end{lemma}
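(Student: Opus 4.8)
The plan is to treat the two assertions separately: the first follows from a direct one-step energy estimate, while the second combines the linearity of all the solvers with a localized version of the convergence analysis already developed for the multiscale-SOE-scheme.

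For the first assertion I would start from the weak form of the coarse one-step solver \eqref{eq:coarseSolver}, i.e.\ the coarse-time-step analogue of \eqref{eq:multiscale-his2} posed on the single interval $(T^n,T^{n+1})$. Setting $W:=\mathcal{G}^{{\text{ms},\ell}}(T^n,v_1;\Phi_{1})_1-\mathcal{G}^{{\text{ms},\ell}}(T^n,v_2;\Phi_{2})_1\in V_{\text{ms},\ell}$ and subtracting the two equations, the initial-data term proportional to $U^0$ and the source term both cancel, because the two solvers share the initial datum $v_0$ and the same $f$. What remains is
\[
(W,v_{\text{ms}})_D+c_{\alpha}\tau_c^{\alpha}a(W,v_{\text{ms}})=\alpha(v_1-v_2,v_{\text{ms}})_D+\alpha(1-\alpha)\tau_c^{\alpha}\Big(\sum_j\omega_j(\phi_{1,j}-\phi_{2,j}),v_{\text{ms}}\Big)_D
\]
for all $v_{\text{ms}}\in V_{\text{ms},\ell}$. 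Testing with $v_{\text{ms}}=W$, discarding the nonnegative term $c_{\alpha}\tau_c^{\alpha}a(W,W)$, and applying Cauchy--Schwarz on the right gives $\normL{W}{D}^2\le\alpha\normL{v_1-v_2}{D}\normL{W}{D}+\alpha(1-\alpha)\tau_c^{\alpha}\normL{v_{\rm{his}}}{D}\normL{W}{D}$, where $v_{\rm{his}}:=\sum_j\omega_j(\phi_{1,j}-\phi_{2,j})$. Dividing by $\normL{W}{D}$ and recognizing $\tau_c^{\alpha}\normL{v_{\rm{his}}}{D}=\vertiii{\Phi_1-\Phi_2}$ from Definition \ref{defn:tnorm} delivers the stated Lipschitz bound with the exact constants $\alpha$ and $\alpha(1-\alpha)$.

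For the second assertion I would first exploit linearity. Since each solver is affine in its data with an identical source contribution for the two arguments, the difference of jumps equals the jump applied to the difference data with vanishing source,
\[
\mathcal{S}(T^n,v_1;\Phi_1)-\mathcal{S}(T^n,v_2;\Phi_2)=\bar{\mathcal{F}}^{{\text{ms},\ell}}_{f}(T^n,w;\Psi)_1-\bar{\mathcal{G}}^{{\text{ms},\ell}}(T^n,w;\Psi)_1,
\]
with $w:=v_1-v_2$ and $\Psi:=\Phi_1-\Phi_2$. I would then insert the semidiscrete one-step propagator $\bar{\mathcal{F}}^{{\text{ms},\ell}}(T^n,w;\Psi)$ from \eqref{eq:exactSolver} and split the right-hand side into the fine-discretization error $\bar{\mathcal{F}}^{{\text{ms},\ell}}_{f}-\bar{\mathcal{F}}^{{\text{ms},\ell}}$ and the coarse-discretization error $\bar{\mathcal{F}}^{{\text{ms},\ell}}-\bar{\mathcal{G}}^{{\text{ms},\ell}}$. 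Each piece is an error of the multiscale-SOE-scheme against the semidiscrete solution over a single coarse interval, now carrying no spatial error since $w\in V_{\text{ms},\ell}$; applying the one-interval analogue of Theorem \ref{thm:conv11}, assembled from the regularity of Lemma \ref{assump}, the truncation estimate of Lemma \ref{eq:approx-numerical-frac}, and the stability of Lemma \ref{lemma:stability}, bounds them by $\Const{0}\tau_f^{\sigma\alpha}(|w|_{2\sigma}+\alpha\vertiii{\Psi})$ and $\Const{0}\tau_c^{\sigma\alpha}(|w|_{2\sigma}+\alpha\vertiii{\Psi})$, respectively. As $\tau_f\le\tau_c$, the fine term is dominated by the coarse one, yielding \eqref{eq:app-jump}.

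The main obstacle I anticipate is this localization in the second step. Theorem \ref{thm:conv11} is stated for the full evolution of the genuine initial-value problem, so I must verify that its proof restricts to a single coarse step and, crucially, that the history term, which enters the auxiliary problem as a parameter-dependent forcing rather than as part of a semigroup evolution, is controlled exactly by $\vertiii{\Psi}$ with the correct weight. Concretely, the regularity bound of Lemma \ref{assump} and the stability bound of Lemma \ref{lemma:stability} must be tracked with the history contribution kept explicit, so that the forcing generated by $\Psi$ produces the factor $\alpha\vertiii{\Psi}$ instead of an uncontrolled dependence on the individual $\psi_j$; the weight $\tau_c^{\alpha}$ built into the norm $\vertiii{\cdot}$ of Definition \ref{defn:tnorm} is precisely what makes this bookkeeping close.
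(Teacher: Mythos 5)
Your proposal is correct and follows essentially the same route as the paper: part 1 is the identical energy argument (test the difference equation with $W$, drop the elliptic term, apply Cauchy--Schwarz, and identify $\tau_c^{\alpha}\normL{v_{\rm his}}{D}$ with $\vertiii{\Phi_1-\Phi_2}$), and part 2 uses the same linearity reduction, insertion of the semidiscrete propagator $\bar{\mathcal{F}}^{{\text{ms},\ell}}$, and splitting into fine and coarse one-step discretization errors, each controlled by the truncation estimate of Lemma \ref{eq:approx-numerical-frac} plus stability. The localization issue you flag is precisely what the paper resolves by posing the auxiliary semidiscrete problem \eqref{eq:yyy} on $[T^n,T^{n+1}]$ with the history difference entering as a forcing, so your anticipated obstacle is handled exactly as you propose.
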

\begin{proof}
1. Let $e_{{\text{ms}}}^{n+1}:=\mathcal{G}^{{\rm{ms},\ell}}(T^n,
v_1;\Phi_{1})_1-\mathcal{G}^{{\rm{ms},\ell}}(T^n,v_2;\Phi_{2})_1$ and let $v_{i,\rm{his}}:=\sum_{j=1}^{N_{\text{exp}}}\omega_j\phi_{i,j}$ for $i=1,2$, then it holds for all multiscale test function $w_{\text{ms}}\in V_{\text{ms},\ell}$,
\begin{align*}
\int_{D}e_{\text{ms}}^{n+1} w_{\text{ms}}\dx
+c_{\alpha}\tau_c^{\alpha}\int_{D}\kappa \nabla e_{\text{ms}}^{n+1}\cdot \nabla w_{\text{ms}}\dx&=\alpha\int_{D}(v_1-v_2) w_{\text{ms}}\dx\\
&+\alpha(1-\alpha) \tau_c^{\alpha}\int_{D}(v_{1,\rm{his}}-v_{2,\rm{his}}) w_{\text{ms}}\dx.
\end{align*}
Choosing $w_{\text{ms}}:=e_{{\text{ms}}}^{n+1}$ leads to
\begin{align*}
\normL{e_{{\text{ms}}}^{n+1}}{D}^2+c_{\alpha}\tau_c^{\alpha}\normE{e_{{\text{ms}}}^{n+1}}{D}^2&=
\alpha\int_{D}(v_1-v_2) e_{{\text{ms}}}^{n+1}\dx
\\&+\alpha(1-\alpha) \tau_c^{\alpha}\int_{D}e_{{\text{ms}}}^{n+1}(v_{1,\rm{his}}-v_{2,\rm{his}}) \dx.
\end{align*}
Then an application of Young's inequality implies
\begin{align*}
\normL{e_{{\text{ms}}}^{n+1}}{D}^2\leq
\alpha \normL{v_1-v_2}{D}\normL{e_{{\text{ms}}}^{n+1}}{D}+\alpha(1-\alpha)
 \tau_c^{\alpha}\normL{v_{1,\rm{his}}-v_{2,\rm{his}}}{D}\normL{e_{{\text{ms}}}^{n+1}}{D}.
\end{align*}
The a direct calculation proves the first assertion.
		
\noindent 2. To prove the second assertion, let
\begin{align}
e^{n+1}&:=\mathcal{S}(T^n,v_1;\Phi_{1})
-\mathcal{S}(T^n, v_2;\Phi_{2})\nonumber\\
&=\Big(\mathcal{F}^{{\text{ms},\ell}}_{f }(T^{n}, v_1;\Phi_{1})-\mathcal{F}^{{\text{ms},\ell}}_{f}(T^{n}, v_2;\Phi_{2})\Big)-\Big(\mathcal{G}^{{\text{ms},\ell}}(T^n,  v_1;\Phi_{1})-\mathcal{G}^{{\text{ms},\ell}}(T^n,  v_2;\Phi_{2})\Big)\nonumber\\
&=\bar{\mathcal{F}}^{{\text{ms},\ell}}_{f}(T^{n}, v_1-v_2;\Phi_{1}-\Phi_2)
-\bar{\mathcal{G}}^{{\text{ms},\ell}}(T^n, v_1-v_2;\Phi_{1}-\Phi_2)\nonumber\\
&=\left(\bar{\mathcal{F}}^{{\text{ms},\ell}}_{f}(T^{n}, v_1-v_2;\Phi_{1}-\Phi_2)-\bar{\mathcal{F}}^{{\text{ms},\ell}}(T^{n}, v_1-v_2;\Phi_{1}-\Phi_2)\right)\nonumber\\
&-\left(\bar{\mathcal{G}}^{{\text{ms},\ell}}(T^n, v_1-v_2;\Phi_{1}-\Phi_2)
-\bar{\mathcal{F}}^{{\text{ms},\ell}}(T^{n}, v_1-v_2;\Phi_{1}-\Phi_2)\right)
\nonumber\\
&=:e^{n+1}_{f}-e^{n+1}_{c}.\nonumber
\end{align}
To estimate $e^{n+1}$, we only need to derive the estimate for $e_{f}^{n+1}$ and $e_{c}^{n+1}$, separately.

Firstly, we introduce the following semidiscrete Galerkin approximation problem in the spatial domain $D$: let $v:=\bar{\mathcal{F}}^{{\text{ms},\ell}}(T^{n}, v_1-v_2;\Phi_{1}-\Phi_2)_1\in L^2([T^n,T^{n+1}];V_{\text{ms},\ell})$ satisfy
\begin{align}
\forall w_{\text{ms}}\in V_{\text{ms},\ell}: \int_{D}\Dbel v\; w_{\text{ms}}\dx
&+\int_{D}\kappa \nabla v\cdot \nabla w_{\text{ms}}\dx=-\frac{1}{\tau_c^{\alpha}\Gamma(1-\alpha)}\int_{D}(v_1-v_2) w_{\text{ms}}\dx\nonumber\\
&+\frac{\alpha}{\Gamma(1-\alpha)}\int_{D}(v_{1,\rm{his}}-v_{2,\rm{his}}) w_{\text{ms}}\dx\label{eq:yyy}\\
&\qquad\qquad \qquad v(\cdot,T^n)=v_1-v_2.\nonumber
\end{align}
Note that the error equation for $e^{n+1}_{c}$ derived from a fully discretization problem with L1 scheme in the temporal domain is
\begin{align}\label{eq:zzz}
\forall w_{\text{ms}}\in V_{\text{ms},\ell}:\int_{D}\bar{\partial}_{\tau_c,\mathcal{N}}^{\alpha}e_{c}^{n+1}\; w_{\text{ms}}\dx+\int_{D}\kappa \nabla e_{c}^{n+1}\cdot \nabla w_{\text{ms}}\dx
&=\int_{D}w_0\cdot w_{\text{ms}}\dx\\
e_{c}^{n}&=0.\nonumber
\end{align}
with $w_{0}$ being the approximation error from L1 scheme,
\begin{align*}
w_{0}&:=-\Dbel v(\cdot,T^{n+1})+\Dbela{v(\cdot,T^{n+1})},
\end{align*}
which can be estimated by Lemma \ref{eq:approx-numerical-frac},
\[
\normL{w_{0}}{D}\leq \Const{0}\tau_c^{-(1-\sigma)\alpha}\Big({|v_1-v_2|_{2\sigma}}+\alpha\vertiii{\Phi_1-\Phi_2}\Big).
\]
The stability estimate \cite[Theorem 3.5]{MR3601002} implies
\begin{align*}
\normL{e_{c}^{n+1}}{D}\leq \Const{0}\tau_c^{\sigma\alpha}\Big(|v_1-v_2|_{2\sigma}+\alpha\vertiii{\Phi_1-\Phi_2}\Big).
\end{align*}
Analogously, we can obtain the estimate for $e_{f}^{n+1}$, which reads
\begin{align*}
\normL{e_{{f}}^{n+1}}{D}\leq\Const{0} \tau_f^{\alpha}\tau_c^{\sigma\alpha-\alpha}\Big(|{v_1-v_2}|_{2\sigma}+\alpha\tau_f^{\alpha}\tau_c^{-\alpha}\vertiii{\Phi_1-\Phi_2}\Big).
\end{align*}
Note that $\tau_f\ll \tau_c$, then a combination of the two estimates above with the triangle inequality, shows the second assertion.		
\end{proof}
\begin{remark}
The key idea to prove the approximation property of the jump operator $\mathcal{S}$ lies in the auxiliary problem \eqref{eq:yyy}, which is a time-fractional problem defined in the subdomain $[T^n,T^{n+1}]$. Together with the history information on $[0,T^n]$ from the terms $\Phi_1$ and $\Phi_2$, this results in a time-fraction problem on  $[T^n,T^{n+1}]$, with both $\bar{\mathcal{F}}^{{\text{ms},\ell}}_{f}(T^{n}, v_1-v_2;\Phi_{1}-\Phi_2)$ and $\bar{\mathcal{G}}^{{\text{ms},\ell}}(T^n, v_1-v_2;\Phi_{1}-\Phi_2)$ as its temporal discretization under time step size of $\tau_f$ and $\tau_c$, respectively.
\end{remark}
We present in the following a smoothing property of the SOE-scheme \eqref{eq:form} that plays a crucial role in the convergence analysis with rough initial data. This property has been utilized without proof in \cite[Assumption 4.1]{li2020wavelet}. This property is proved by first establishing the global fully discretized scheme for $U_{k}^{n}$, and then derive the global error equation between the multiscale SOE scheme \eqref{eq:multiscale-his2} and Algorithm \ref{algorithm:wavelet+parareal} to eliminate the source term $f$.
\begin{lemma}[Smoothing property of the SOE-scheme \eqref{eq:form}]\label{lemma:smoothing}
Let Assumption \ref{ass:coeff} hold. Assume that the source term $f\in W^{2,\infty}(I; L^2(D))$ and initial data $u_0\in \dot{H}^{2\sigma}(D)$ for some $0<\sigma\leq 1$. Let $\ell\in \mathbb{N}_{+}$ be the level parameter. The coarse time step size and fine time step size are $\tau_c$ and $\tau_f$. Let $U_{\text{ms},\ell}^m\in V_{\text{ms},\ell}$ be the solution to Problem \eqref{eq:multiscale-his2} and let $U_k^n$ be the solution from Algorithm \ref{algorithm:wavelet+parareal} with iteration $k\in\mathbb{N}_{+}$.
 Then for any $\sigma\in (0,1)$, it holds
\begin{equation}\label{eq:globalProp}
\begin{aligned}
|{U_{\text{ms},\ell}^m-U_{k}^{n}}|_{2\sigma}
&\lesssim (T^n)^{-\alpha\sigma}\normL{u_{\text{ms},\ell}^{m'}-U_{k}^{{n-1}}}{D}.
\end{aligned}
\end{equation}
\end{lemma}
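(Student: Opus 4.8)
The plan is to reduce the claim to a discrete smoothing estimate for a \emph{source-free} time-fractional scheme on the multiscale space $V_{\text{ms},\ell}$, following the strategy announced just before the statement: first produce a global fully discretized scheme for $U_k^n$, then subtract the multiscale--SOE scheme to annihilate $f$, and finally invoke the smoothing of the resulting homogeneous problem. The reason for insisting on the \emph{global} scheme is essential to the lemma: it is what makes the elapsed time $T^n$ (rather than the single step $\tau_c$) appear in the smoothing factor.

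First I would unroll Algorithm \ref{algorithm:wavelet+parareal}. Inserting the definition \eqref{eq:correct} of the jump operator into the correction step gives the identity
\begin{equation*}
U_k^n=\mathcal{S}(T^{n-1},U_{k-1}^{n-1};\Phi_{k-1}^{n-1})+\mathcal{G}^{\text{ms},\ell}(T^{n-1},U_k^{n-1};\Phi_k^{n-1})_1,
\end{equation*}
with the history update $\Phi_k^n=\mathcal{H}(\Phi_k^{n-1};\tau_c,U_k^{n-1},U_k^n)$. Reading the fine solver $\mathcal{F}^{\text{ms},\ell}_f$ as the composition of $\tau_c/\tau_f$ fine SOE steps of \eqref{eq:form}, this exhibits $U_k^n$ as the $n$-th node of a global coarse-step fractional scheme of the SOE form \eqref{eq:form1}, carrying the memory accumulated since $t=0$ and driven by the source $\mathcal{P}_\ell f$. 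The multiscale--SOE solution $U_{\text{ms},\ell}^m$ (with $t_m=T^n$, $t_{m'}=T^{n-1}$) is the analogous global \emph{fine}-step scheme with the identical source, and it reproduces itself under one coarse propagation, $U_{\text{ms},\ell}^m=\mathcal{G}^{\text{ms},\ell}(T^{n-1},U_{\text{ms},\ell}^{m'};\Phi_{\text{ms},\ell}^{m'})_1+\mathcal{S}(T^{n-1},U_{\text{ms},\ell}^{m'};\Phi_{\text{ms},\ell}^{m'})$.

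Subtracting the two global schemes cancels $f$ because both are affine in their data and both carry $\mathcal{P}_\ell f$: the error $E^n:=U_{\text{ms},\ell}^m-U_k^n$ then splits into a difference of coarse propagators $\bar{\mathcal{G}}^{\text{ms},\ell}(T^{n-1},U_{\text{ms},\ell}^{m'}-U_k^{n-1};\,\cdot\,)_1$, controlled in $L^2$ by Lemma \ref{lemma:parareal}(1), plus a difference of jump operators handled by Lemma \ref{lemma:parareal}(2) and the stability bound (Lemma \ref{lemma:stability}); the associated history differences are absorbed through the norm $\vertiii{\cdot}$. The leading term is the homogeneous coarse propagation of the previous-node discrepancy $U_{\text{ms},\ell}^{m'}-U_k^{n-1}$. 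Because the unrolled scheme is a \emph{global} discrete fractional diffusion with memory from $0$, this single coarse step at node $n$ inherits the smoothing of the full elapsed time $T^n$. Diagonalizing the self-adjoint positive operator $\mathcal{L}_\ell$ on $V_{\text{ms},\ell}$ decouples the recurrence into scalar problems, turning the $|\cdot|_{2\sigma}$ seminorm into $\big(\sum_\lambda \lambda^{2\sigma}|\hat E|^2\big)^{1/2}$; the gain $(T^n)^{-\alpha\sigma}$ then follows from the scalar bound $\lambda^{\sigma}|S_n(\lambda)|\lesssim (T^n)^{-\alpha\sigma}$ for the discrete solution operator $S_n(\lambda)$, the discrete counterpart of the boundedness of $x^{\sigma}E_{\alpha,1}(-x)$ that underlies the smoothing results of \cite[Theorems 3.10--3.11]{MR3601002} and \cite[Theorem 4.1]{li2020wavelet}.

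The main obstacle is precisely this last step in the present memory structure. Since the solution operator of \eqref{eqn:pde} is not a semigroup, the homogeneous scheme does not factor as a product of one-step maps, and the history is represented only approximately by the exponential sums of Corollary \ref{corollary:soe}. I would therefore have to show that the perturbation of the pure L1 scheme by the exponential-sum memory, quantified in Lemma \ref{eq:approx-numerical-frac}, together with the smallness requirement \eqref{assum:epsilon}, does not degrade the smoothing rate, and that the stability estimate of Lemma \ref{lemma:stability} propagates the bound uniformly in the number of coarse steps so that the constant in \eqref{eq:globalProp} remains independent of $T^n$, $H$, $\tau_c$ and $\tau_f$. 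The delicate point is that the nonlocal memory contribution must be controlled in the stronger $|\cdot|_{2\sigma}$ norm, whereas Lemma \ref{lemma:stability} operates only in $L^2$; bridging this gap via the spectral bound above is where the real work lies.
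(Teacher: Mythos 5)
Your overall strategy coincides with the paper's: unroll Algorithm \ref{algorithm:wavelet+parareal} into a global fully discretized SOE scheme, subtract the multiscale--SOE scheme \eqref{eq:multiscale-his2} so that the source $\mathcal{P}_\ell f$ cancels, and then appeal to a discrete smoothing estimate for the resulting homogeneous error problem. Where you diverge is in the two technical steps. First, the paper does not split the error node-by-node into coarse-propagator and jump-operator differences via Lemma \ref{lemma:parareal} (that splitting is the engine of Theorem \ref{prop:wavelet-based}, not of this lemma); instead it keeps a single global error equation whose right-hand side is $R_n=-(\bar{\partial}_{\tau_c}^{\alpha}-\bar{\partial}_{\tau_f}^{\alpha})\,\mathcal{F}^{\text{ms},\ell}_f(T^{n-1},U_{\text{ms},\ell}^{m'}-U_{k-1}^{n-1};\cdot)_1$, i.e.\ the coarse--fine consistency defect of the discrete fractional derivative applied to the fine propagation of the previous error, and bounds $\|R_n\|_D$ in $L^2$ by the truncation-error Lemma \ref{eq:approx-numerical-frac}. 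Second --- and this is the substantive difference --- you assert that Lemma \ref{lemma:stability} "operates only in $L^2$" and therefore propose to diagonalize $\mathcal{L}_\ell$ and establish a scalar bound $\lambda^{\sigma}|S_n(\lambda)|\lesssim (T^n)^{-\alpha\sigma}$ yourself; but the second assertion of Lemma \ref{lemma:stability} is precisely the estimate $\tau^{\sigma\alpha}|U^n|_{2\sigma}\lesssim\normL{u_0}{D}+\tau^{\alpha}\sum_k\normL{f(\cdot,t_{k+1})}{D}$ that bridges $L^2$ data to the $|\cdot|_{2\sigma}$ seminorm, and the paper applies it to the error equation (with $e_k^0=0$) to get $\tau_c^{\alpha\sigma}|e_k^n|_{2\sigma}\leq C\tau_c^{\alpha}\sum_m\|R_m\|_D$ in one line, deferring only the final conversion of $n^{1-\alpha}\tau_c^{-\alpha\sigma}$ into the stated $(T^n)^{-\alpha\sigma}$ factor to the arguments of Thom\'ee and Sakamoto--Yamamoto. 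So the step you flag at the end as "where the real work lies" is largely already packaged in Lemma \ref{lemma:stability}; your spectral route is a viable, more classical alternative, but it amounts to reproving that lemma, and your proposal stops short of executing it.
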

\begin{proof}
Note that Algorithm \ref{algorithm:wavelet+parareal} is equivalent to seeking $U_{k}^{n}\in V_{\text{ms},\ell}$ for $n=1,\cdots,M_{c}$, satisfying
\begin{equation}\label{eq:parareal-global}
\left\{
\begin{aligned}
\bar{\partial}_{\tau_c}^{\alpha} (U_k^n,v_{\text{ms}})_D&+a(U_k^n,v_{\text{ms}})=(\mathcal{P}_{\ell}f(\cdot,T^n),v_{\text{ms}})_D+\bar{\partial}_{\tau_c}^{\alpha} (S_{k-1}^n,v_{\text{ms}})_D+a(S_{k-1}^n,v_{\text{ms}})
\quad \text{for all} \, \, v_{\text{ms}}\in V_{\text{ms},\ell}\\
\Phi_{k,j}^n&=\mathcal{H}(\Phi_{k,j}^{n-1}; \tau_c, U_{k}^{n-1}, U_{k}^{n})\text{ for }n\geq 0 \text{ and }
\Phi_{k,j}^0=0 \text{ for all } j=1,\cdots,N_{\text{exp}}\\
U_{k}^0&=\mathcal{P}_{\ell} u_0.
\end{aligned}
\right.
\end{equation}
Here, the local correction $S_{k-1}^n:=\mathcal{S}(T^{n-1},U^{n-1}_{k-1};\Phi^{n-1}_{k-1})$. Note that for all $v_{\text{ms}}\in V_{\text{ms},\ell}$, it holds
\begin{align*}
(\bar{\partial}_{\tau_c}^{\alpha} S_{k-1}^n,v_{\text{ms}})_D+a(S_{k-1}^n,v_{\text{ms}})
=(\bar{\partial}_{\tau_c}^{\alpha}-\bar{\partial}_{\tau_f}^{\alpha})\Big(\mathcal{F}^{\text{ms},\ell}_f(T^{n-1},U_{k-1}^{n-1};\Phi_{k-1}^{n-1})_1, v_{\text{ms}}\Big)_D.
\end{align*}
Let $e_k^n:=U_{k}^{n}-U_{\text{ms},\ell}^m$, then it satisfies
\begin{equation}\label{eq:parareal-global}
\left\{
\begin{aligned}
\bar{\partial}_{\tau_c}^{\alpha} (e_k^n,v_{\text{ms}})_D&+a(e_k^n,v_{\text{ms}})=(R_n,v_{\text{ms}})_D
\quad \text{for all} \, \, v_{\text{ms}}\in V_{\text{ms},\ell}\\
f_{j,k}[n]&=\mathcal{H}(f_{j,k}[n-1]; \tau_c, e_{k}^{n-1}, e_{k}^{n})\text{ for }n\geq 0 \text{ and }
f_{j,k}[0]=0 \text{ for all } j=1,\cdots,N_{\text{exp}}\\
e_{k}^0&=0
\end{aligned}
\right.
\end{equation}
with the source term $R_n$ being
\begin{align*}
R_n&:=-(\bar{\partial}_{\tau_c}^{\alpha}-\bar{\partial}_{\tau_f}^{\alpha}) U_{\text{ms},\ell}^m+(\bar{\partial}_{\tau_c}^{\alpha}-\bar{\partial}_{\tau_f}^{\alpha})\mathcal{F}^{\text{ms},\ell}_f(T^{n-1},U_{k-1}^{n-1};\Phi_{k-1}^{n-1})_1\\
&=-(\bar{\partial}_{\tau_c}^{\alpha}-\bar{\partial}_{\tau_f}^{\alpha}) \Big(U_{\text{ms},\ell}^{m}-\mathcal{F}^{\text{ms},\ell}_f(T^{n-1},U_{k-1}^{n-1};\Phi_{k-1}^{n-1})_1\Big)\\
&=-(\bar{\partial}_{\tau_c}^{\alpha}-\bar{\partial}_{\tau_f}^{\alpha}) \mathcal{F}^{\text{ms},\ell}_f(T^{n-1},U_{\text{ms},\ell}^{m'}-U_{k-1}^{n-1};\Phi_{\text{ms},\ell}^{m'}-\Phi_{k-1}^{n-1})_1.
\end{align*}
Note that $e_{k-1}^{n-1}=U_{\text{ms},\ell}^{m'}-U_{k-1}^{n-1}$, Lemma \ref{eq:approx-numerical-frac} implies
\begin{align*}
\|R_n\|_D\leq C_0 (n-1)^{-\alpha}\tau_c^{-\alpha}\Big( \|e_{k-1}^{n-1}\|_D+\alpha\vertiii{\Phi_{\text{ms},\ell}^{m'}-\Phi_{k-1}^{n-1}}\Big).
\end{align*}
By an application of Lemma \ref{lemma:stability} to the error equation \eqref{eq:parareal-global}, we obtain
\begin{align*}
\tau_c^{\alpha\sigma}|e_k^n|_{2\sigma}&\leq C_{\alpha}\tau_c^{\alpha}\sum_{m=1}^{n}\|R_n\|_D\\
&\leq C_{0}n^{1-\alpha} \left(\frac{n}{n-1}\right)^{\alpha}\Big( \|e_{k-1}^{n-1}\|_D+\alpha\vertiii{\Phi_{\text{ms},\ell}^{m'}-\Phi_{k-1}^{n-1}}\Big)\\
&\leq C_{0}n^{1-\alpha}\Big( \|e_{k-1}^{n-1}\|_D+\alpha\vertiii{\Phi_{\text{ms},\ell}^{m'}-\Phi_{k-1}^{n-1}}\Big).
\end{align*}
Consequently, we obtain
\begin{align*}
|e_k^n|_{2\sigma}\leq C_{0}n^{1-\alpha}\times \tau_c^{-\alpha\sigma} \Big( \|e_{k-1}^{n-1}\|_D+\alpha\vertiii{\Phi_{\text{ms},\ell}^{m'}-\Phi_{k-1}^{n-1}}\Big).
\end{align*}

Then one can follow the proof in \cite[Lemma 7.3]{thomee1984galerkin} and \cite[Theorem 2.2]{Sakamoto-Yamamoto11} the desired result, under the condition that $\tau_c$ is of reasonable size.
\end{proof}
We present in the next theorem the convergence rate of Algorithm \ref{algorithm:wavelet+parareal} to Problems \eqref{eqn:pde} in pointwise-in-time in $L^2(D)$-norm. To derive it, we first decompose the error from Algorithm \ref{algorithm:wavelet+parareal} as a summation of the error from  Problem \eqref{eq:multiscale-his2} and the error from parareal algorithm. The former part is estimated by Theorem \ref{thm:conv11}.
\begin{theorem}
[Error estimate of Algorithm \ref{algorithm:wavelet+parareal} to Problem \eqref{eqn:pde} in $L^2(D)$-norm]\label{prop:wavelet-based}
Let Assumption \ref{ass:coeff} hold. Assume that the source term $f\in W^{2,\infty}(I; L^2(D))$ and initial data $u_0\in L^2(D)$. Let $\ell\in \mathbb{N}_{+}$ be the level parameter. The coarse time step size and fine time step size are $\tau_c$ and $\tau_f$. Assume that $\tau_f$ is sufficiently small, s.t.,
\begin{align}\label{assm:gamma}
\mathrm{e}^{-\tau_c\gamma}((1-\alpha)^{-1}+\epsilon\tau_c^{1+\alpha}/2)
<\eta(\alpha,\theta):=\frac{\alpha}{1+\alpha}(\theta-\alpha) \text{  for given  } \theta\in [\alpha,1).
\end{align}
Let $u(\cdot,T^n)\in V$ be the solution to Problem \eqref{eqn:pde} and let $U_k^n$ be the solution from Algorithm \ref{algorithm:wavelet+parareal} with iteration $k\in\mathbb{N}_{+}$. Assume that the iteration number $k$ is not large.
Then for any $\sigma\in (0,1)$, it holds
\begin{equation}\label{eq:waveletErr}
\begin{aligned}
\normL{u(\cdot,T^n)-U_{k}^{n}}{D}&\lesssim \left({\tau_f}^{\sigma\alpha}+|\eta(H,\ell)\log \eta(H,\ell)|^2\right)\|f\|_{W^{2,\infty}(0,T;L^2(D))}\\
&+\left({\tau_f}^{\sigma\alpha}+\eta(H,\ell)|\log \eta(H,\ell)|(T^n)^{-\alpha}+\Pi_{j=0}^{k}\Big(\frac{\tau_c}{T^{n-j}}\Big)^{\alpha\sigma} \right)|u_0|_{2\sigma}.
\end{aligned}
\end{equation}
\end{theorem}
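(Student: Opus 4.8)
The plan is to decompose the total error by the triangle inequality into the discretization error of the multiscale-SOE-scheme and the pure parareal iteration error,
\[
\normL{u(\cdot,T^n)-U_{k}^{n}}{D}\leq \normL{u(\cdot,T^n)-U_{\text{ms},\ell}^{n}}{D}+\normL{U_{\text{ms},\ell}^{n}-U_{k}^{n}}{D},
\]
where $U_{\text{ms},\ell}^{n}$ denotes the solution of Problem \eqref{eq:multiscale-his2} at the coarse node $T^n$. The first summand is controlled directly by Theorem \ref{thm:conv11}, which supplies the $\tau_f^{\sigma\alpha}$ and the multiscale spatial contributions $\eta(H,\ell)|\log\eta(H,\ell)|$ appearing in \eqref{eq:waveletErr}. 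It therefore remains to bound the parareal error $e_k^n:=U_{\text{ms},\ell}^{n}-U_{k}^{n}$, and I expect the characteristic telescoping factor $\Pi_{j=0}^{k}(\tau_c/T^{n-j})^{\alpha\sigma}$ to emerge from this piece.

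Next I would set up the error recursion. Subtracting Step 7 of Algorithm \ref{algorithm:wavelet+parareal} from the defining relation $U_{\text{ms},\ell}^{n}=\mathcal{F}^{\text{ms},\ell}_f(T^{n-1},U_{\text{ms},\ell}^{n-1};\Phi^{n-1}_{\text{ms},\ell})_1$ and inserting the identity $\mathcal{S}=\mathcal{F}^{\text{ms},\ell}_f-\mathcal{G}^{\text{ms},\ell}$ from \eqref{eq:correct}, the error $e_k^n$ splits into a coarse-propagator difference acting on $e_k^{n-1}$ together with a jump difference acting on the previous-iteration error $e_{k-1}^{n-1}$, each accompanied by the corresponding differences of the history vectors $\Phi$. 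This is precisely the structure that separates the sequential dependence (through $e_k^{n-1}$) from the iterative gain (through $e_{k-1}^{n-1}$).

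The core of the argument is then to estimate the two pieces with the lemmas already in place. The Lipschitz bound of Lemma \ref{lemma:parareal}(1) controls the coarse term by $\alpha\normL{e_k^{n-1}}{D}+\alpha(1-\alpha)\vertiii{\cdot}$, while the approximation bound of Lemma \ref{lemma:parareal}(2) controls the jump term by $\Const{0}\tau_c^{\sigma\alpha}\big(|e_{k-1}^{n-1}|_{2\sigma}+\alpha\vertiii{\cdot}\big)$. The decisive step is to invoke the smoothing property Lemma \ref{lemma:smoothing} at iteration $k-1$ to trade the stronger seminorm for an $L^2$ norm one node earlier, $|e_{k-1}^{n-1}|_{2\sigma}\lesssim (T^{n-1})^{-\alpha\sigma}\normL{e_{k-1}^{n-2}}{D}$, so that the jump contribution carries the factor $(\tau_c/T^{n-1})^{\alpha\sigma}$; iterating this factor in $k$ is what produces the product estimate. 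In parallel, the history differences $\vertiii{\Phi^{n-1}_{\text{ms},\ell}-\Phi^{n-1}_{k-1}}$ are propagated through \eqref{eqn:history-propag} and bounded via Lemma \ref{lem:222xxx}, and hypothesis \eqref{assm:gamma} ensures this propagation is a strict contraction, so the history error is absorbed into the solution error rather than accumulating.

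Finally I would close a double induction on the iteration index $k$ and the time index $n$, exactly as in \cite[Theorem 4.1]{li2020wavelet}, to reach $\normL{e_k^n}{D}\lesssim \Pi_{j=0}^{k}(\tau_c/T^{n-j})^{\alpha\sigma}|u_0|_{2\sigma}$; combined with the Theorem \ref{thm:conv11} bound for the first summand, this yields \eqref{eq:waveletErr}. The main obstacle I anticipate is the simultaneous control of the solution error and the history error within the induction: neither can be estimated in isolation, since each recursion feeds the other, and closing the loop hinges on the contraction condition \eqref{assm:gamma} dominating the history propagation while the smoothing decay $(T^{n-1})^{-\alpha\sigma}$ supplies the temporal gain driving the product. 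A secondary subtlety is that the initial datum enters only through $|u_0|_{2\sigma}$, so the smoothing property must first regularize the rough component before the parareal factors can act.
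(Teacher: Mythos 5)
Your proposal follows essentially the same route as the paper's proof: the identical triangle-inequality split with Theorem \ref{thm:conv11} handling the multiscale-SOE part, the same error recursion combining Lemma \ref{lemma:parareal} with the smoothing property of Lemma \ref{lemma:smoothing} to generate the factor $(\tau_c/T^{n})^{\alpha\sigma}$, the same treatment of the history error via Lemma \ref{lem:222xxx} under the contraction hypothesis \eqref{assm:gamma}, and the same closing induction referencing \cite[Theorem 4.1]{li2020wavelet}. The coupling of the solution and history recursions that you flag as the main obstacle is resolved in the paper exactly as you anticipate, by forming the weighted combination $(1-\beta)e_{k+1}^n+\beta d_{k+1}^n$ with $\beta=\tfrac{1}{1+\alpha}$ so that both recursions contract simultaneously with factor $\theta$.
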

\begin{proof}
We first split the error as the summation of $\normL{u(\cdot,T^n)-U_{\text{ms},\ell}^{m}}{D}$ and $\normL{U_{\text{ms},\ell}^{m}-U_{k}^{n}}{D}$ for $m:=\tau_c/{\tau_f} \times n$ with $T^n=t_m$. Similarly, let $m\rq{}:=\tau_c/{\tau_f} \times (n-1)$, then it holds $t_{m\rq{}}=T^{n-1}$. Here,  $U_{\text{ms},\ell}^{m}$ is the solution to Problem \eqref{eq:multiscale-his2} and the error estimate is presented in Theorem \ref{thm:conv11}.

We next estimate the error induced by parareal algorithm in the multiscale method, i.e., the second term $\normL{U_{\text{ms},\ell}^{m}-U_{k}^{n}}{D}$. We will prove:
\begin{align}\label{eq:parareal-est}
e_k^n:=\normL{U_{\text{ms},\ell}^{m}-U_{k}^{n}}{D}
\leq\Const{0} \Pi_{j=0}^{k}\Big(\frac{\tau_c}{T^{n-j}}\Big)^{1-\alpha} |u_0|_{2\sigma}.
\end{align}
Obviously, the inequality \eqref{eq:parareal-est} holds when $k=0$. Assume that it holds for iteration $k$ for some $k\in \mathbb{N}_{+}$. We will show that it holds for the next iteration $k+1$. Before that, we first derive the error estimate for the history information. Let $\theta\in [\alpha,1)$ be a positive parameter, then Lemma \ref{lem:222xxx} implies
\begin{align}
d_{k}^n:=\vertiii{\Phi^{m}_{\text{ms},\ell}-\Phi^{n}_{k}}
&\leq \mathrm{e}^{-\tau_c\gamma}\vertiii{\Phi^{m'}_{\text{ms},\ell}-\Phi^{n-1}_{k}}\\&+
\mathrm{e}^{-\tau_c\gamma}((1-\alpha)^{-1}+\epsilon\tau_c^{1+\alpha}/2)
\Big(\normL{u_{\text{ms},\ell}^{m'}-U_{k}^{n-1}}{D}+\normL{u_{\text{ms},\ell}^{m}-U_{k}^{n}}{D}\Big)\nonumber\\
&\leq \eta(\alpha, \theta)\left(d_{k}^{n-1}+e_{k}^{n-1}+e_k^n\right).\label{eq:yyy1}
\end{align}
Here, $\gamma$ is defined in \eqref{not:gamma} and we have used \eqref{assm:gamma} in the last inequality.

Apply this inequality iteratively, we obtain
\begin{align}\label{eq:xxx1}
d_{k}^n\leq
(1+\eta(\alpha,\theta))\sum_{i=1}^{n-k}\eta(\alpha,\theta)^{i-1}e_{k}^{n-i}
+\eta(\alpha,\theta)e_{k}^{n}.
\end{align}
By \eqref{eq:parareal-est}, we derive
\begin{align}\label{eq:zzz1}
d_{k}^n\leq\frac{2\Const{0}}{1-\eta(\alpha,\theta)} \Pi_{j=0}^{k}\Big(\frac{\tau_c}{T^{n-j}}\Big)^{\alpha\sigma}|u_0|_{2\sigma}.
\end{align}
Next we estimate $e^n_{k+1}$. We can obtain from Algorithm \ref{algorithm:wavelet+parareal}:
\begin{align*}	
U_{\text{ms},\ell}^{m}-U_{k+1}^{n}&=\mathcal{S}(T^{n-1},U_{\text{ms},\ell}^{m\rq{}};\Phi^{m'}_{\text{ms},\ell})
-\mathcal{S}(T^{n-1},U_{k}^{n-1};\Phi^{n-1}_{k})\\
&+\mathcal{G}^{{\text{ms},\ell}}(T^{n-1},U_{\text{ms},\ell}^{m\rq{}};\Phi^{m'}_{\text{ms},\ell})-\mathcal{G}^{{\text{ms},\ell}}(T^{n-1},U_{k+1}^{n-1};\Phi^{n-1}_{k+1}).
\end{align*}
Consequently, an application of Lemma \ref{lemma:parareal} and Lemma \ref{lemma:smoothing} leads to
\begin{align*}
&e_{k+1}^n=\normL{U_{\text{ms},\ell}^{m}-U_{k+1}^{n}}{D}\\
&\leq\normL{\mathcal{S}(T^{n-1},U_{\text{ms},\ell}^{m\rq{}};\Phi^{m'}_{\text{ms},\ell})
-\mathcal{S}(T^{n-1},U_{k}^{n-1};\Phi^{n-1}_{k})}{D}+\normL{\mathcal{G}^{{\text{ms},\ell}}(T^{n-1},u_{\text{ms},\ell}^{m\rq{}};\Phi^{m'}_{\text{ms},\ell})-\mathcal{G}^{{\text{ms},\ell}}(T^{n-1},U_{k+1}^{n-1};\Phi^{n-1}_{k+1})}{D}\\
&\leq \Const{0} \Big(\frac{1}{T^{n}}\Big)^{\alpha\sigma}\tau_c^{\alpha\sigma}\Big(\normL{U_{\text{ms},\ell}^{m\rq{}}-U_{k}^{n-1}}{D}
+\alpha\vertiii{\Phi^{m'}_{\text{ms},\ell}-\Phi^{n-1}_{k}}\Big)
+\alpha\Big(\normL{U_{\text{ms},\ell}^{m\rq{}}-U_{k+1}^{n-1}}{D}
+(1-\alpha)\vertiii{\Phi^{m'}_{\text{ms},\ell}-\Phi^{n-1}_{k+1}}\Big)
\end{align*}
for any $\sigma\in (0,1)$. That is,
\begin{align*}
e_{k+1}^n\leq \Const{0}  \Big(\frac{\tau_c}{T^{n}}\Big)^{\alpha\sigma}\Big(e_k^{n-1}+\alpha d_k^{n-1}\Big)
+\alpha\Big(e_{k+1}^{n-1}+(1-\alpha)d_{k+1}^{n-1}\Big).
\end{align*}
Let $\beta:=\frac{1}{1+\alpha}$ be a positive parameter, then together with the inequality \eqref{eq:yyy1}, we obtain
\begin{align*}
(1-\beta)e_{k+1}^n+\beta d_{k+1}^n
&\leq \Const{0}(1-\beta+\beta\eta(\alpha,\theta)(1+\alpha))  \Big(\frac{\tau_c}{T^{n}}\Big)^{\alpha\sigma}\Big(e_k^{n-1}+\alpha d_k^{n-1}\Big)\\&
+\left((1-\beta)\alpha+\beta\eta(\alpha,\theta)(1+\alpha)\right)e_{k+1}^{n-1}\\&
+\left((1-\beta)\alpha(1-\alpha)+\beta\eta(\alpha,\theta)(1+\alpha-\alpha^2)\right)d_{k+1}^{n-1}.
\end{align*}
Notice that
\begin{align*}
(1-\beta)\alpha+\beta\eta(\alpha,\theta)(1+\alpha)=\theta(1-\beta),
\end{align*}
and
\begin{align*}
(1-\beta)\alpha(1-\alpha)+\beta\eta(\alpha,\theta)(1+\alpha-\alpha^2)\leq \theta\beta.
\end{align*}
These, together with inequalities \eqref{eq:parareal-est} and \eqref{eq:zzz1}, lead to
\begin{align*}
(1-\beta)e_{k+1}^n+\beta d_{k+1}^n
&\leq \Const{0}^2\Bigg(1+\frac{2\alpha}{1-\eta(\alpha,\theta)} \Bigg)\Pi_{j=0}^{k+1}\Big(\frac{\tau_c}{T^{n-j}}\Big)^{\alpha\sigma}|u_0|_{2\sigma}\\&
+\theta\Bigg((1-\beta)e_{k+1}^{n-1}+\beta d_{k+1}^{n-1}\Bigg).
\end{align*}
Consequently, we derive
\begin{align}\label{eq:monster1111111}
(1-\beta)e_{k+1}^n+\beta d_{k+1}^n
\leq \Const{0}^2\Bigg(1+\frac{2\alpha}{1-\eta(\alpha,\theta)} \Bigg)\Pi_{j=0}^{k+1}\Big(\frac{\tau_c}{T^{n-j}}\Big)^{\alpha\sigma}|u_0|_{2\sigma}
+\theta\Bigg((1-\beta)e_{k+1}^{n-1}+\beta d_{k+1}^{n-1}\Bigg).
\end{align}
Repeat this recurrence relation, we obtain
\begin{align*}
(1-\beta)e_{k+1}^n+\beta d_{k+1}^n
\leq \frac{1}{1-\theta}\Const{0}^2\Bigg(1+\frac{2\alpha}{1-\eta(\alpha,\theta)} \Bigg)\Pi_{j=0}^{k+1}\Big(\frac{\tau_c}{T^{n-j}}\Big)^{\alpha\sigma}|u_0|_{2\sigma}\times
A(n,k),
\end{align*}
where
\begin{align*}
A(n,k)&:=\sum\limits_{j=1}^{n-(k+1)}\theta^{j-1} \Big(\Pi_{i=0}^{k+1}\frac{T^{n-i}}{T^{n-j-i}}\Big)^{\alpha\sigma}\\
&= \sum\limits_{m=k+1}^{n-1}\theta^{n-1-m} \Big(\Pi_{i=0}^{k+1}\frac{n-i}{m-i}\Big)^{\alpha\sigma}\\
&\leq \sum\limits_{m=k+1}^{n-1}\theta^{n-1-m} \Big(\Pi_{i=0}^{k+1}\frac{n-i}{m-i}\Big).
\end{align*}
The last inequality above holds because $\alpha\sigma<1$. Notice that we have proved in \cite[Theorem 4.1]{li2020wavelet} that $A(n,k)$ is uniformly bounded with respect to $n$ when $k$ is small. Consequently, this proves the case for $k+1$, and completes the proof.
\end{proof}
\section{Numerical experiment}\label{sec:num}
In this section, we perform a series of numerical experiments to demonstrate the performance of the proposed WEMP Algorithm. Let the spatial domain be $D=[0,1]^2$. The coarse-scale and fine-scale mesh sizes are $H=\frac{1}{10}$ and $h=\frac{1}{200}$. The coarse-scale and fine-scale time step sizes are $\tau_c=0.1$ and $\tau_f=10^{-4}$. For all the numerical tests, we take the level parameter $\ell=2$. A larger $\ell$ can significantly reduce the spatial error. We take a smooth initial data,
\begin{align*}
u_0(x,y)=x(1-x)y(1-y).
\end{align*}
The error is measured in $L^2(D)$-relative error and $H^1(D)$-relative error, defined by
\begin{align*}
\frac{\|u_h^n-\hat{u}_h^n\|_{D}}{\|u_h^n\|_{D}} \times 100 \qquad \frac{\|u_h^n-\hat{u}_h^n\|_{H^1_{\kappa}(D)}}{\|u_h^n\|_{H^1_{\kappa}(D)}}\times 100, 
\end{align*}
where $u_h^n$ and $\hat{u}_h^n$ denote the reference solution and its approximation at $t_n$, respectively. 
\subsection{The influence of $\epsilon$ and $\alpha$}
We study in this section the influence of $\epsilon$ and $\alpha$ to the accuracy of \eqref{eqn:weakform_h} when we replace the L1 scheme $\tilde{\partial}_{\tau_f}^{\alpha}$ with the summation of exponential approximation $\bar{\partial}_{\tau_f}^{\alpha}$, i.e., we solve for $U^{n}_h\in V_{h}$ for $n=1,\cdots,M_{f}$ satisfying
\begin{equation*}
\left\{
\begin{aligned}
\Big(\frac{1}{\tau_f^{\alpha}c_{\alpha}}+ A_h\Big)U^{n}_h
&=\frac{\alpha}{\tau_f^{\alpha}c_{\alpha}}U^{n-1}_h+
\frac{\alpha}{\Gamma(1-\alpha)}
\sum_{j=1}^{N_{\text{exp}}}\omega_{j}\psi^{n-1}_{h,j}
+\frac{1}{\Gamma(1-\alpha)t_{n}^{\alpha}} U^0_h+F^{n}_h \\
\psi^{n}_{h,j}&=\mathcal{H}(\psi^{n-1}_{h,j}; \tau_f, U_h^{n-1}, U_h^n)\text{ for }n\geq 0\text{ and } \psi^{0}_{h,j}=0\text{ for all } j=1,\cdots,N_{\text{exp}}\\
U_h^0&=I_h u_0.
\end{aligned}
\right.
\end{equation*}
Let $T:=1$, then \eqref{assum:epsilon} implies the minimum requirement of $\epsilon$ is $\epsilon\leq \epsilon^*:=0.5$.
\subsubsection{Smooth source term}
The first series of numerical tests are presented in Figures \ref{fig:test11} and \ref{fig:test12}, with a smooth source data, different numbers of terms in the SOE approximation and different values of $\alpha=0.1$ and $\alpha=0.9$. The source term is
\begin{align*}
f(x,y,t)=xyt.
\end{align*}
One can observe from both cases that a more economic numerical scheme $\bar{\partial}_{\tau_f}^{\alpha}$ yields almost the same accuracy even with a much larger tolerance $\epsilon\gg \epsilon^*$. For example, using a number of $N_{\text{exp}}=19$ summations with $\epsilon =24823$, both $L^2(D)$-relative error and $H^1(D)$-relative error are below 0.15\% for $\alpha=0.9$ in Figure \ref{fig:test12}. The performance for a smaller $\alpha=0.1$ is better as it is shown in Figure \ref{fig:test11} that both $L^2(D)$-relative error and $H^1(D)$-relative error are below 0.0003\% in this case.
\begin{figure} [H]
\centering
\begin{subfigure}[b]{0.3\textwidth}
\centering
\includegraphics[width=1.3\textwidth,trim={9cm 0.5cm 5cm 0.8cm},clip]{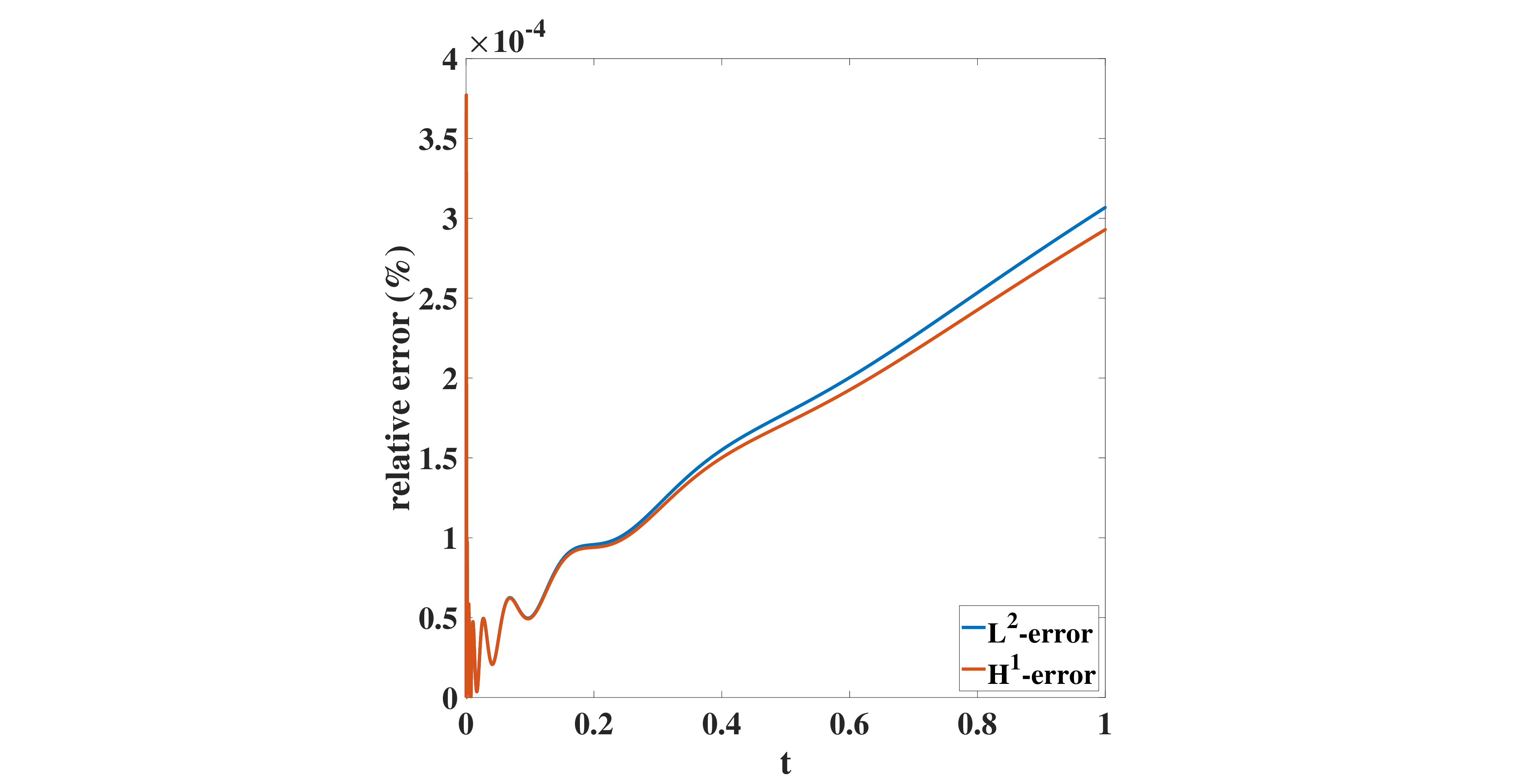}
\caption{$(\epsilon, N_{\text{exp}})=(15.662,19)$}
\end{subfigure}
\begin{subfigure}[b]{0.3\textwidth}
\centering
\includegraphics[width=1.3\textwidth,trim={9cm 0.5cm 5cm 0.8cm},clip]{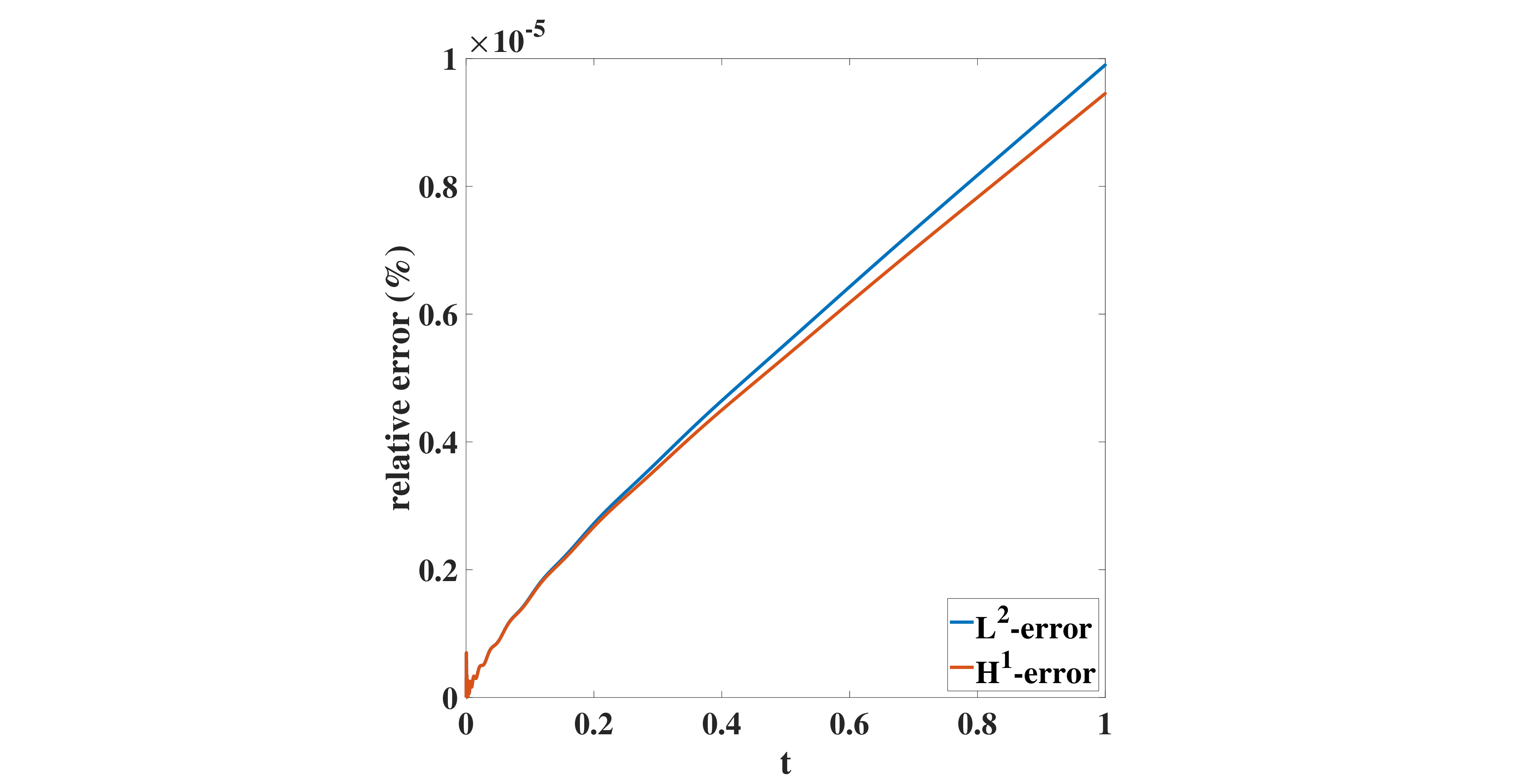}
\caption{$(\epsilon, N_{\text{exp}})=(2.4384e-1,30)$ }
\end{subfigure}
\begin{subfigure}[b]{0.3\textwidth}
\centering
\includegraphics[width=1.3\textwidth,trim={9cm 0.5cm 5cm 0.8cm},clip]{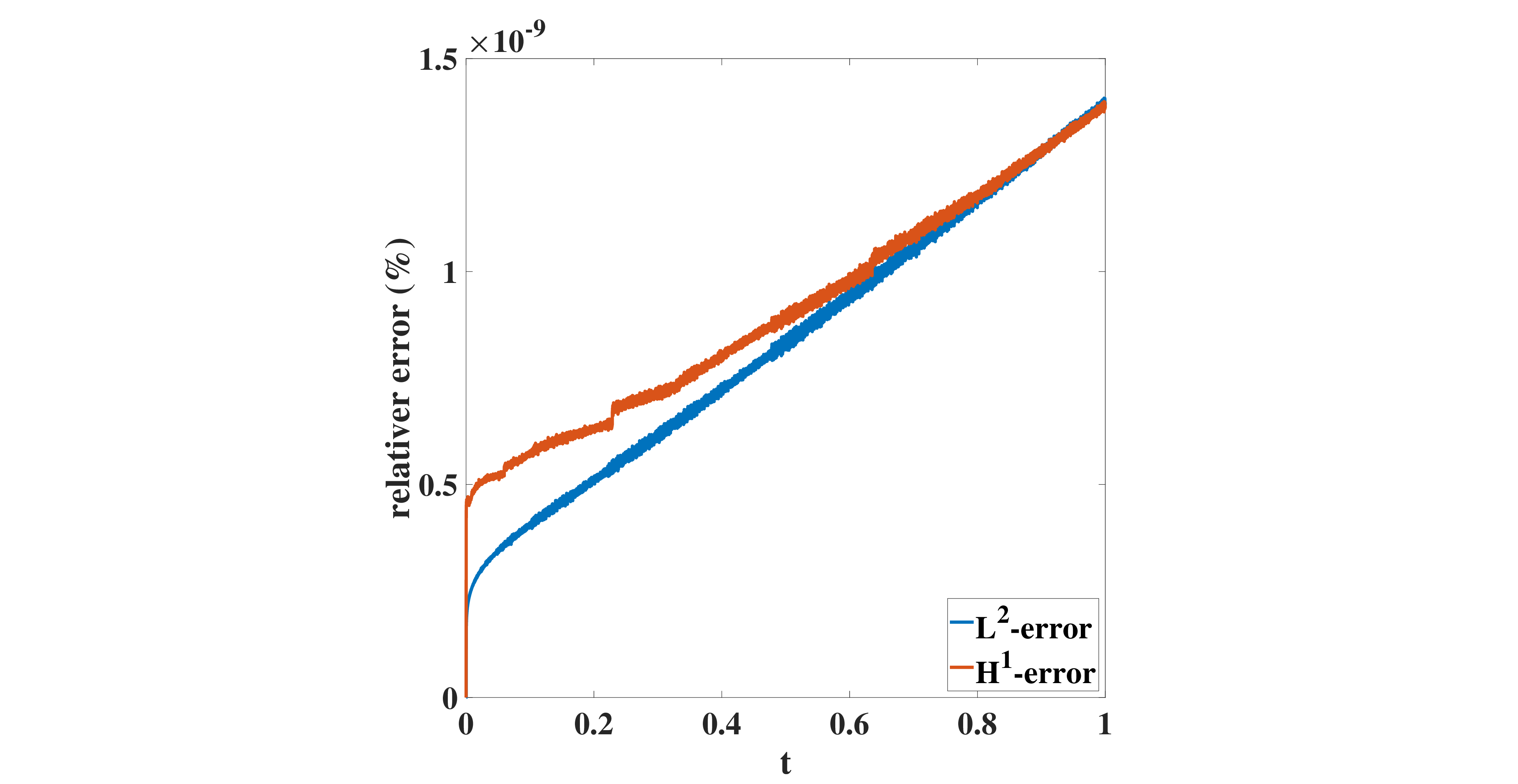}
\caption{$(\epsilon, N_{\text{exp}})=(6.8884e-5,52)$.}
\end{subfigure}
\caption{$\alpha=0.1$, $\epsilon^*=0.5$.}
\label{fig:test11}
\end{figure}

\begin{figure} [H]
\centering
\begin{subfigure}[b]{0.3\textwidth}
\centering
\includegraphics[width=1.3\textwidth,trim={9cm 0.5cm 5cm 0.8cm},clip]{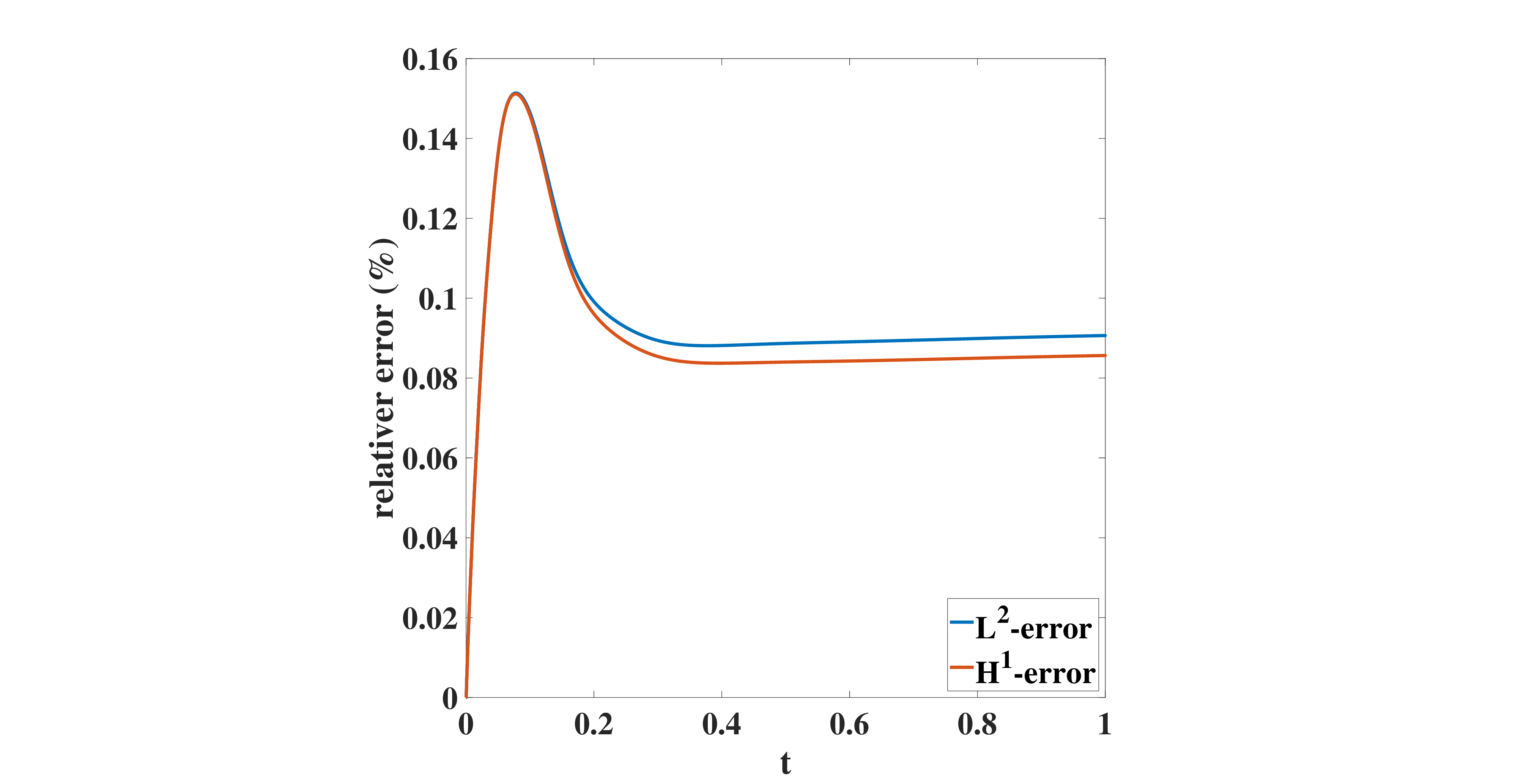}
\caption{$(\epsilon, N_{\text{exp}})=(2.4823e4,19)$}
\end{subfigure}
\begin{subfigure}[b]{0.3\textwidth}
\centering
\includegraphics[width=1.3\textwidth,trim={9cm 0.5cm 5cm 0.8cm},clip]{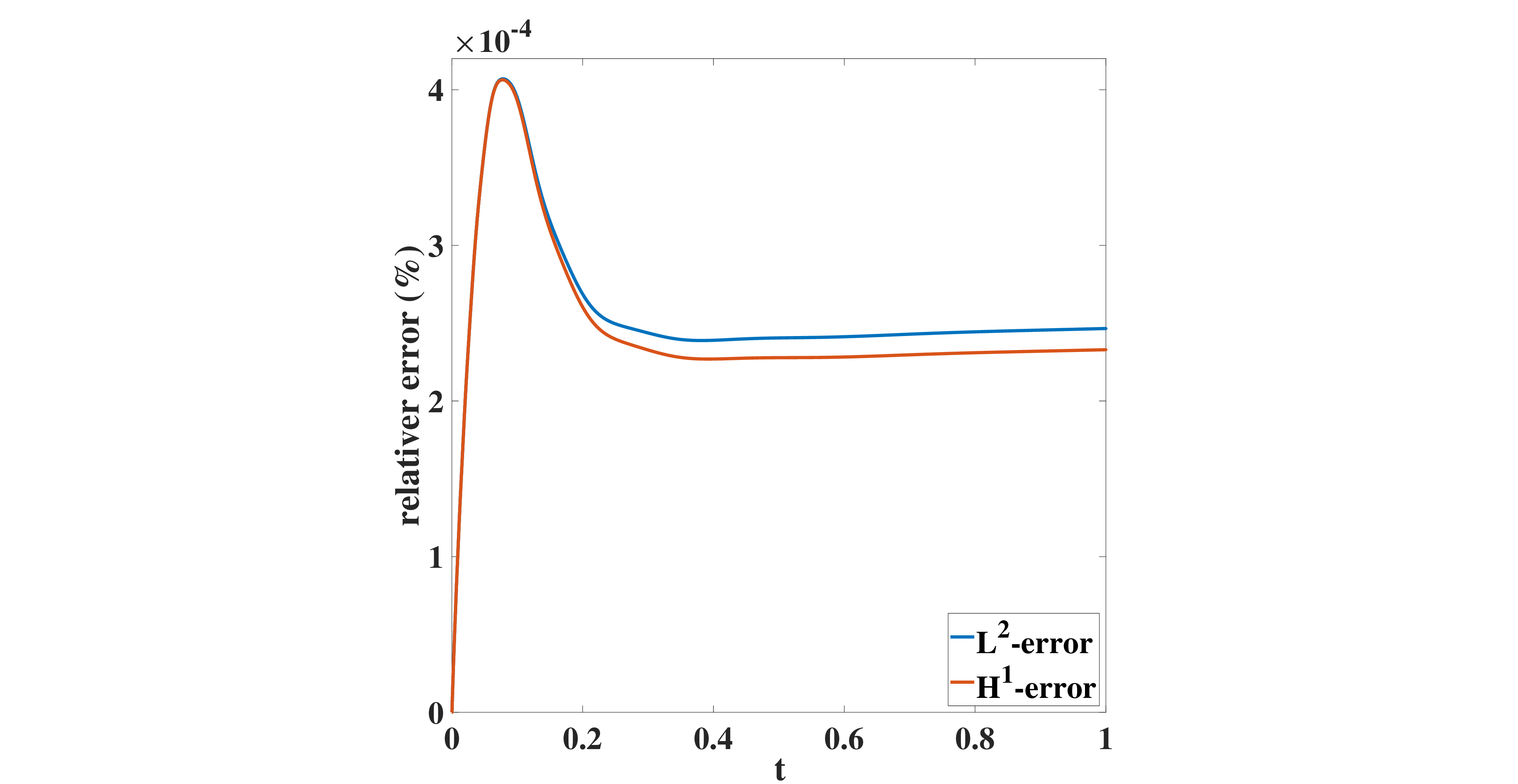}
\caption{$(\epsilon, N_{\text{exp}})=(386.4684,30)$}
\end{subfigure}
\begin{subfigure}[b]{0.3\textwidth}
\centering
\includegraphics[width=1.3\textwidth,trim={9cm 0.5cm 5cm 0.8cm},clip]{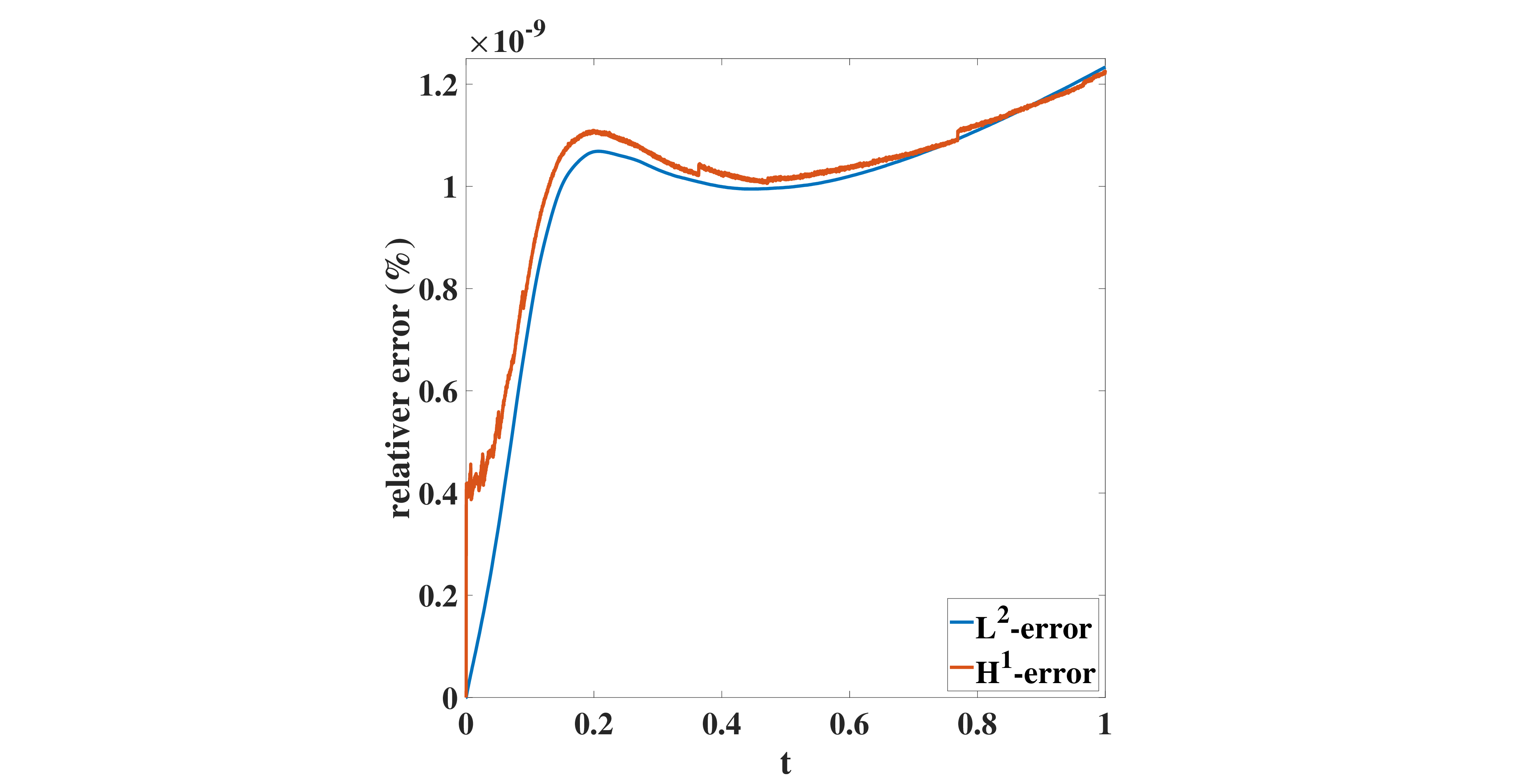}
\caption{$(\epsilon, N_{\text{exp}})=(0.2239,52)$.}
\end{subfigure}
\caption{$\alpha=0.9$, $\epsilon^*=0.5$.}
\label{fig:test12}
\end{figure}

\subsubsection{Rough source term}
The next series of numerical tests are presented in Figure \ref{fig:test11rough}, with a rough source term and different numbers of terms in the SOE approximation. The rough source term has jump along the temporal variable, defined by
\begin{align*}
f(x,y,t)=sgn(cos(2\pi t))xy.
\end{align*}
Compared with Figure \ref{fig:test12}, the influence of rough source term is negligible. We observe similar behavior for varying $\alpha$. {{For the sake of conciseness, we only present the results for $\alpha=0.9$ in Figure \ref{fig:test11rough}.}}
\begin{figure} [H]
\centering
\begin{subfigure}[b]{0.3\textwidth}
\centering
\includegraphics[width=1.3\textwidth,trim={9cm 0.5cm 5cm 1cm},clip]{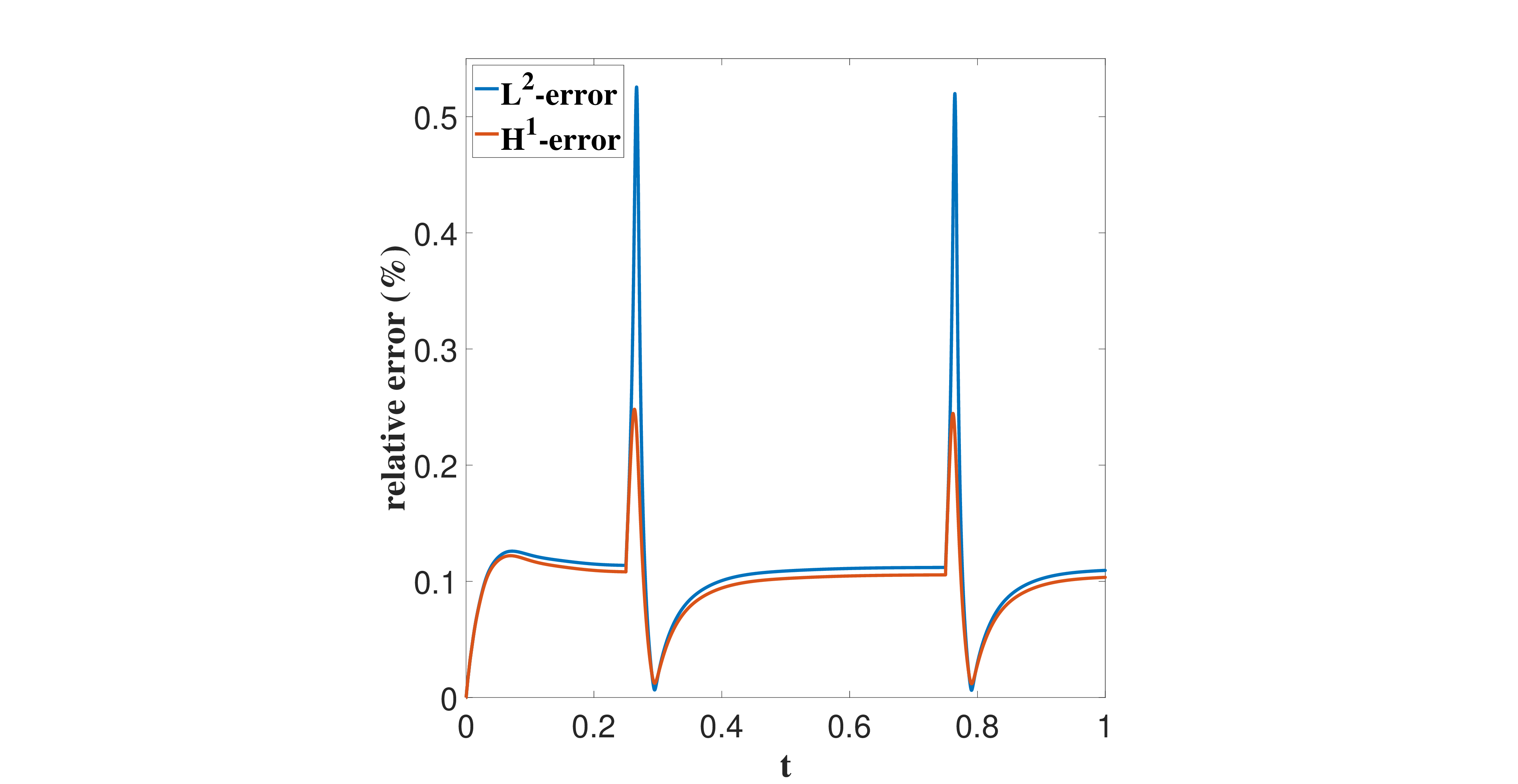}
\caption{$(\epsilon, N_{\text{exp}})=(7.7843e4,19)$}
\end{subfigure}
\begin{subfigure}[b]{0.3\textwidth}
\centering
\includegraphics[width=1.3\textwidth,trim={9cm 0.5cm 5cm 1cm},clip]{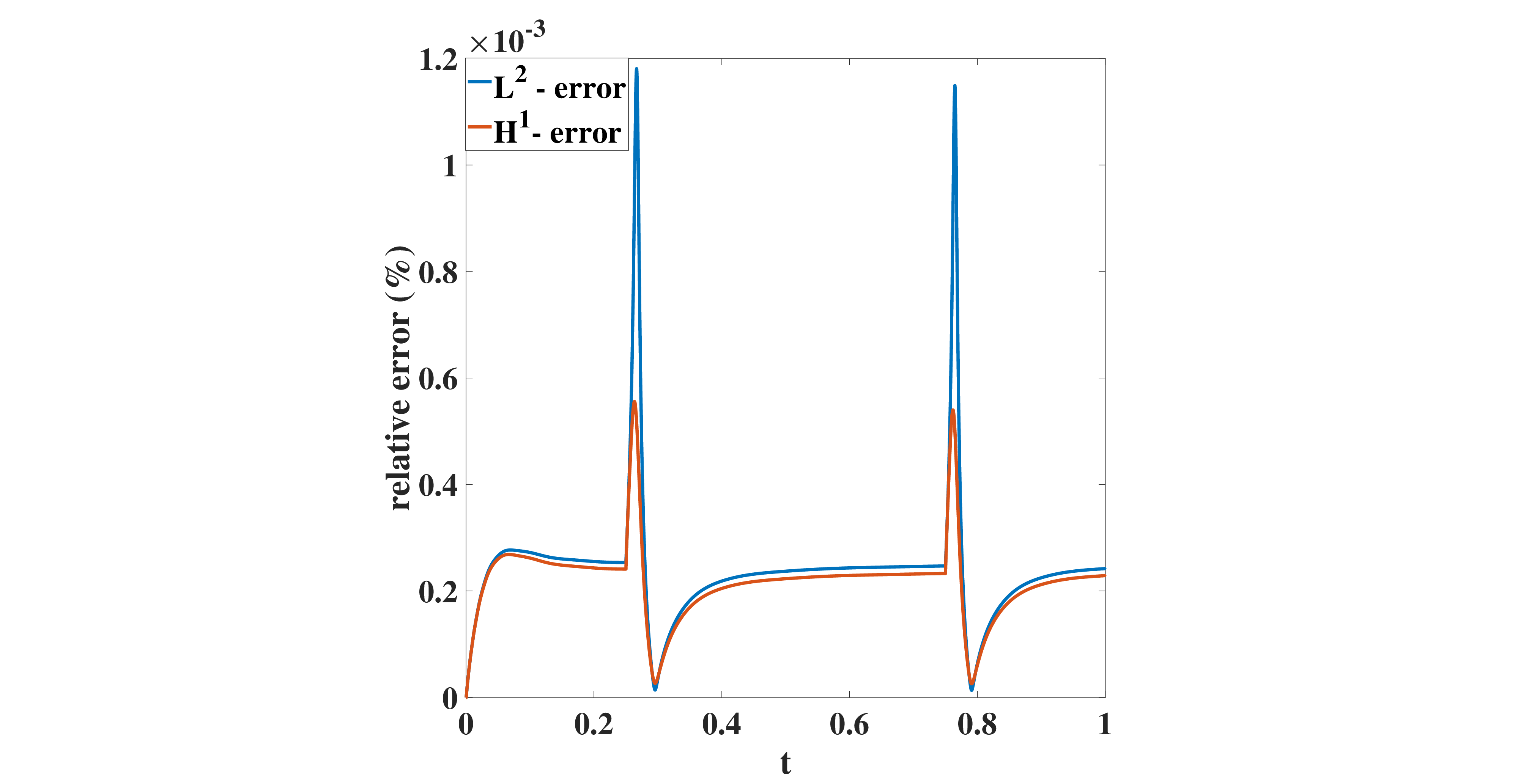}
\caption{$(\epsilon, N_{\text{exp}})=(970.7445,30)$ }
\end{subfigure}
\begin{subfigure}[b]{0.3\textwidth}
\centering
\includegraphics[width=1.3\textwidth,trim={9cm 0.5cm 5cm 1cm},clip]{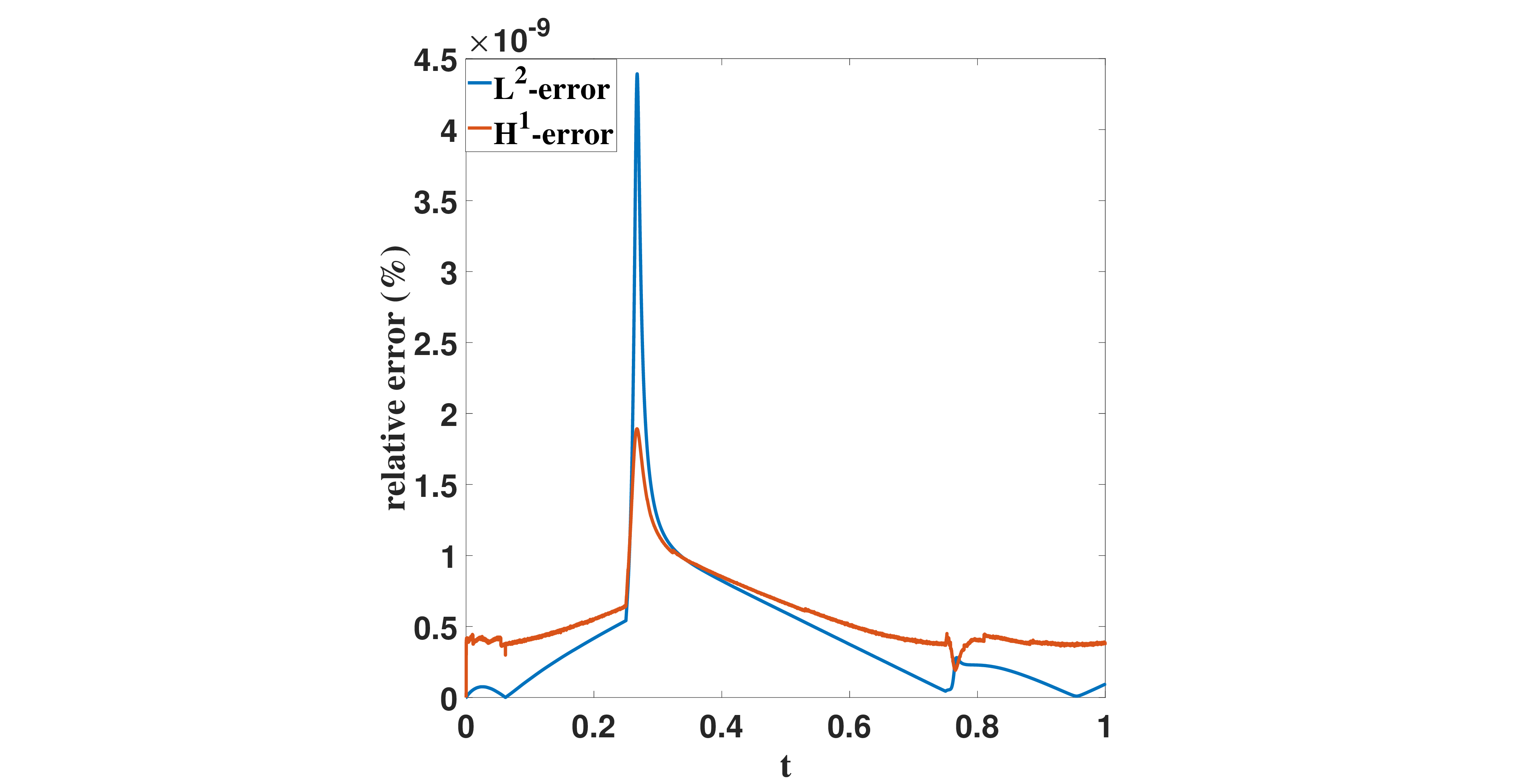}
\caption{$(\epsilon, N_{\text{exp}})=(0.2742,52)$.}
\end{subfigure}
\caption{$\alpha=0.9$, $\epsilon^*=0.5$.}\label{fig:test11rough}
\end{figure}

\subsection{The performance of Algorithm \ref{algorithm:wavelet+parareal}}
Next, we present the performance of Algorithm \ref{algorithm:wavelet+parareal} with different $\alpha=0.1, 0.5, 0.9$, $\epsilon$ and iteration number $k=1, 2, 3$. The convergence history is presented in Figure \ref{fig:0.1}, Figure \ref{fig:0.5} and Figure \ref{fig:0.9}. For all tests, Algorithm \ref{algorithm:wavelet+parareal} converges within 3 iterations{{, this explains the fact that most lines in these figures overlap.}} We observe that the relative $L^2(D)$-error is less than $5\%$ in the 3 iteration. We can not reduce further this error due to the coarse scale mesh size $H=1/10$. To reduce this error further, we need to decrease $H$. Slightly more iteration numbers are needed for larger parameter $\alpha$. 
\begin{figure} [H]
\centering
\begin{subfigure}[b]{0.3\textwidth}
\centering
\includegraphics[width=1.3\textwidth,trim={9cm 0.2cm 5cm .5cm},clip]{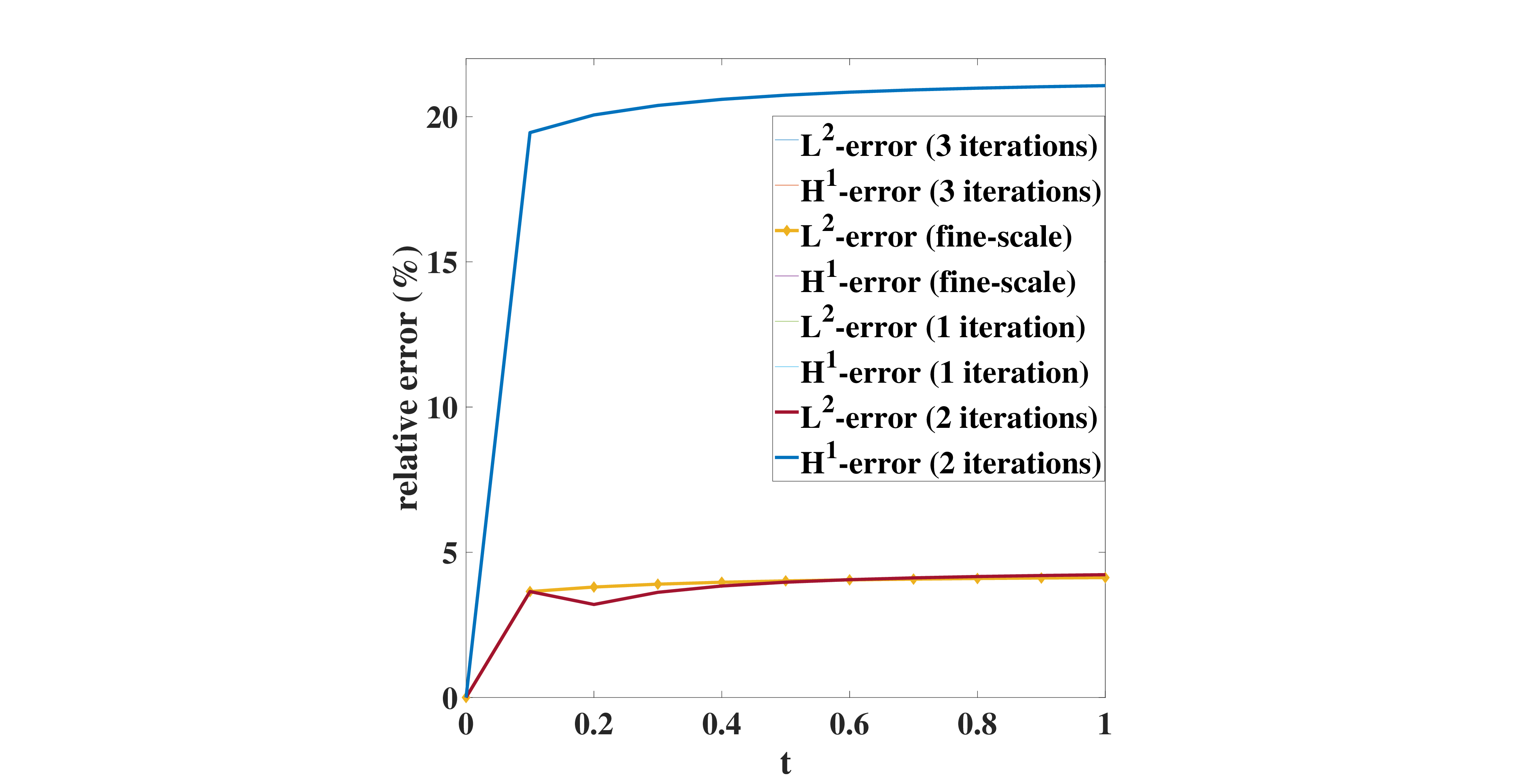}
\caption{$(\epsilon, N_{\text{exp}})=(0.0014,44)$}
\end{subfigure}
\begin{subfigure}[b]{0.3\textwidth}
\centering
\includegraphics[width=1.3\textwidth,trim={9cm 0.5cm 5cm 1cm},clip]{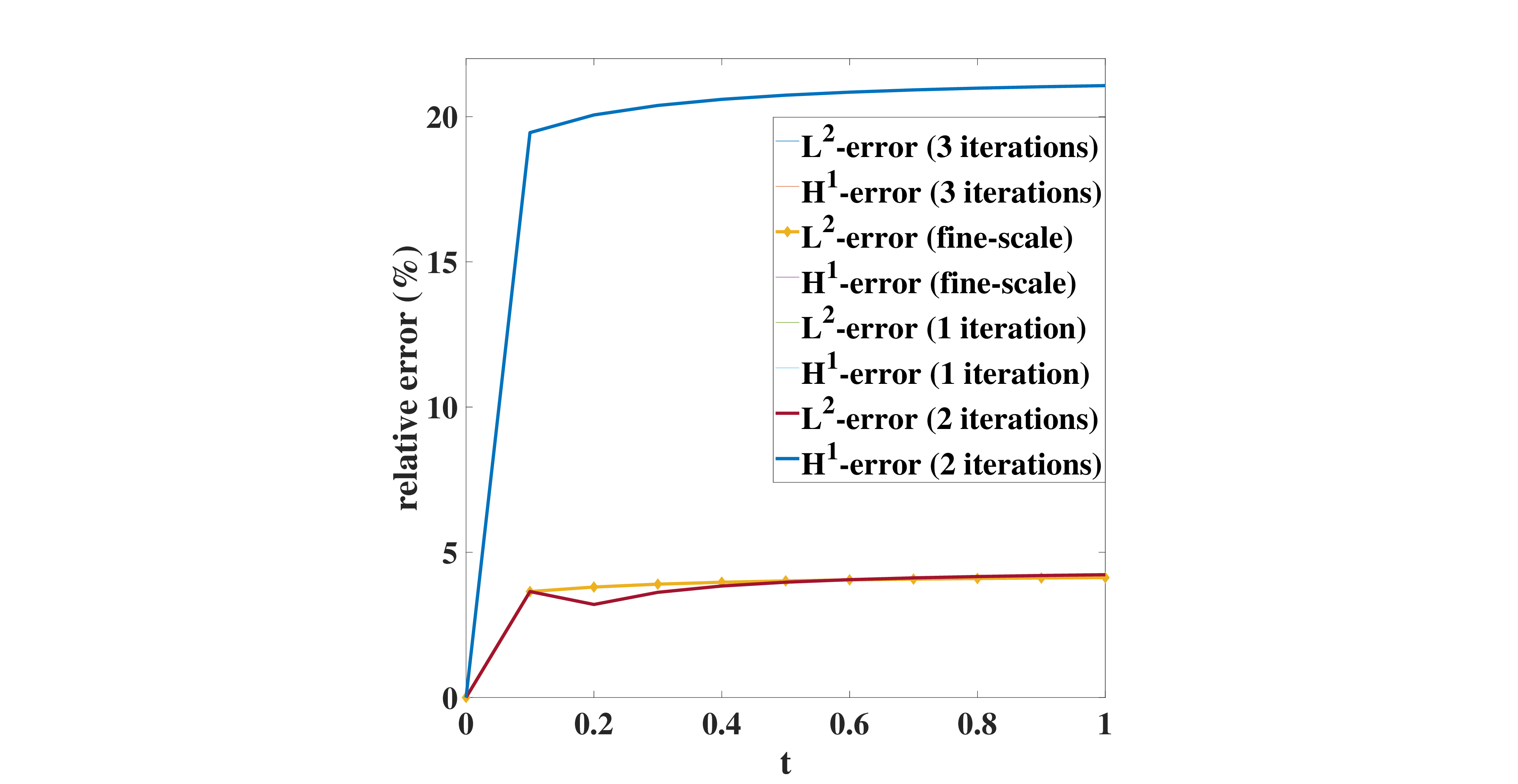}
\caption{$(\epsilon, N_{\text{exp}})=(1.3332,28)$}
\end{subfigure}
\begin{subfigure}[b]{0.3\textwidth}
\centering
\includegraphics[width=1.3\textwidth,trim={9cm 0.5cm 5cm 1cm},clip]{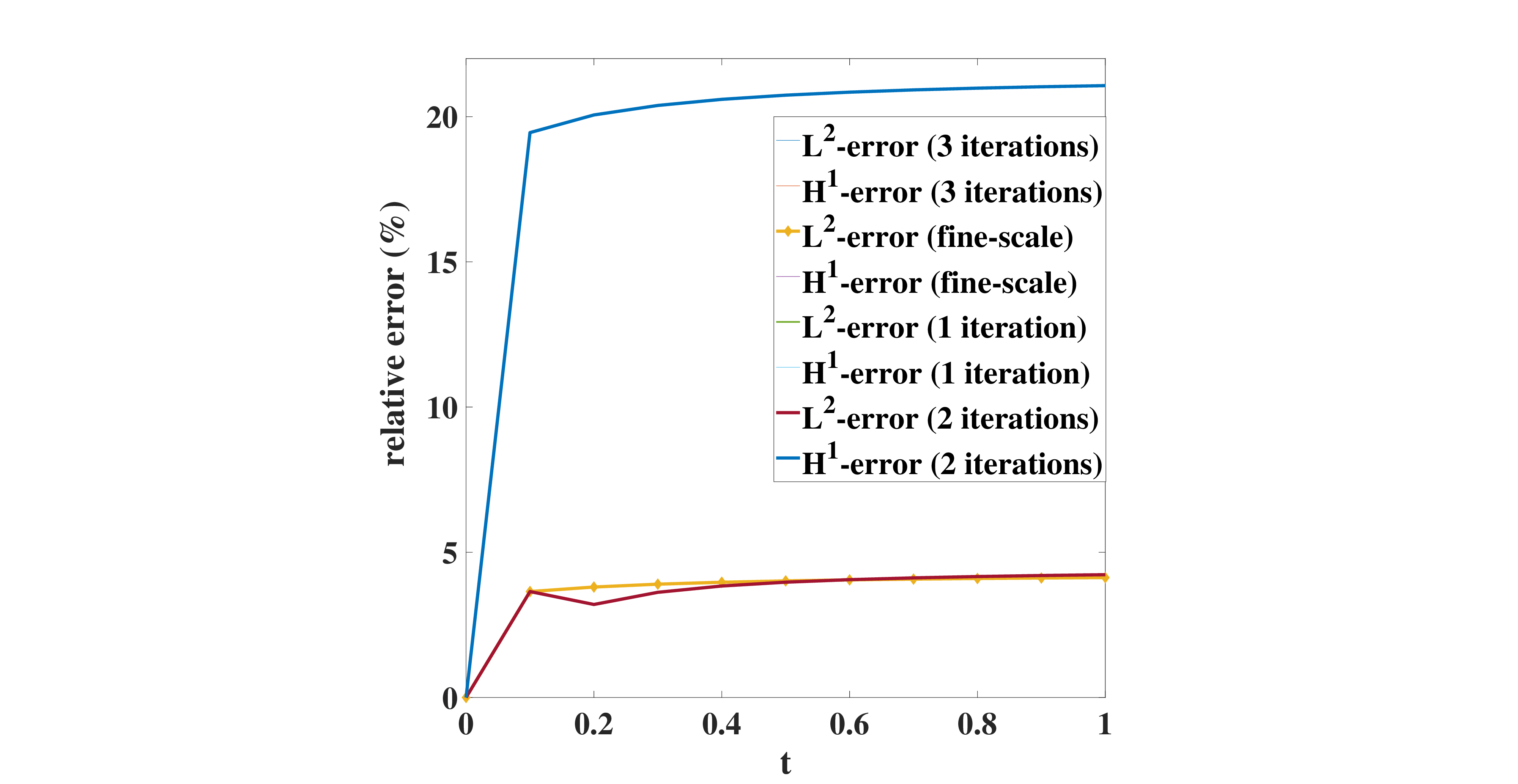}
\caption{$(\epsilon, N_{\text{exp}})=(18.4317,22)$.}
\end{subfigure}
\caption{$\alpha=0.1$, $\epsilon^*=0.5$.}
\label{fig:0.1}
\end{figure}

\subsection{The performance of Algorithm \ref{algorithm:wavelet+parareal} for long time}
We test our proposed algorithm for longer time in this section with $T=10$. The parameters of $\epsilon$ and $\nexp$ are determined by Lemma \ref{lemma:stability} and Corollary \ref{corollary:soe}. 
Similar convergence behavior is observed as in the previous section, which is shown in Figure \ref{fig:largetime}. For example, the $L^2(D)$-relative error is below 5\% for $\alpha=0.1$ and $\alpha=0.5$ after 3 iteration. However, the performance for $\alpha=0.9$ is slightly worse with $L^2(D)$-relative error below 10\% after 3 iteration. In this case, the performance of multiscale-SOE-scheme has a slightly larger error as observed from Figure \ref{fig:largetime3}.

\begin{figure} [H]
\centering
\begin{subfigure}[b]{0.3\textwidth}
\centering
\includegraphics[width=1.3\textwidth,trim={9cm 0.2cm 5cm .5cm},clip]{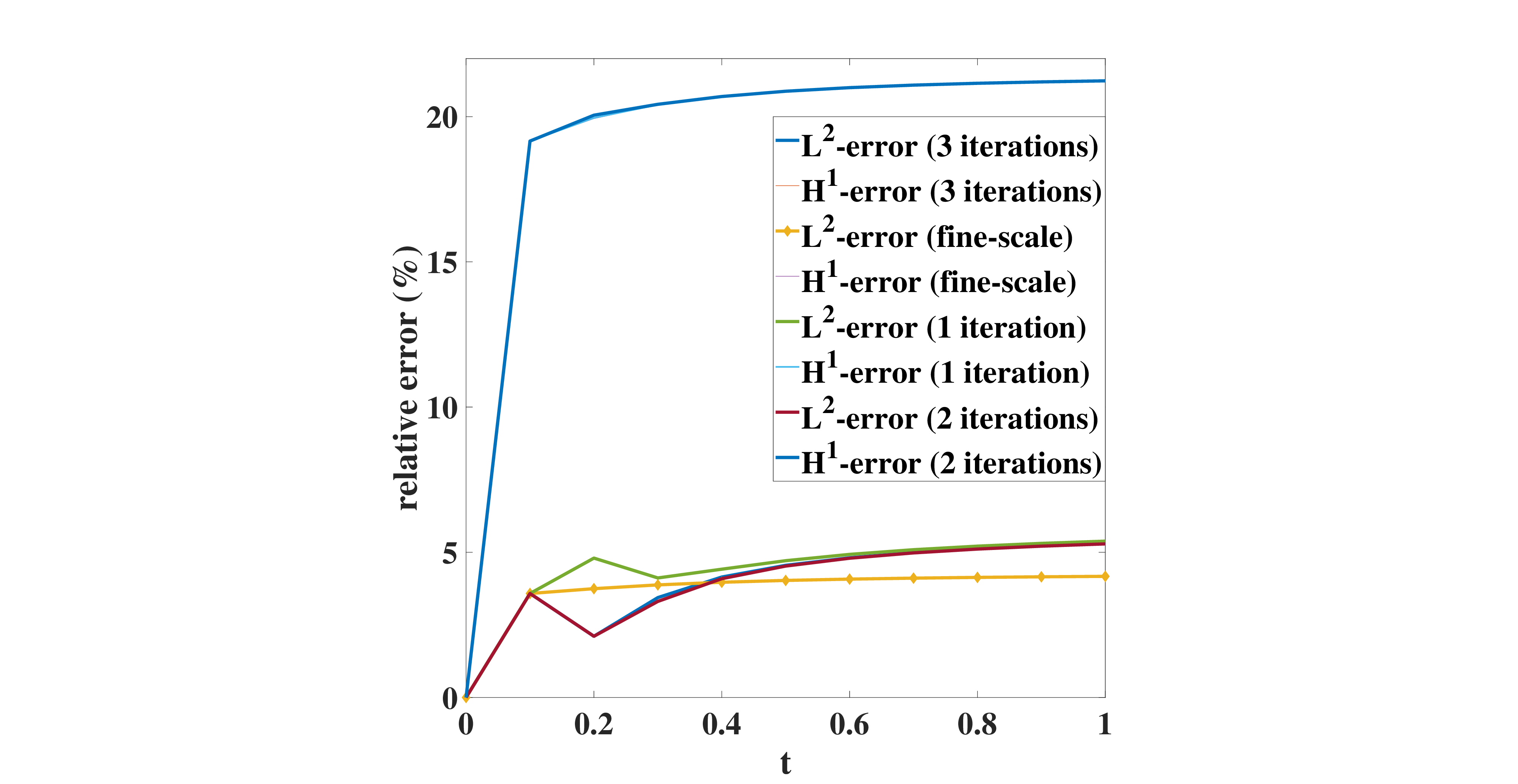}
\caption{$(\epsilon, N_{\text{exp}})=(0.0014,55)$}
\end{subfigure}
\begin{subfigure}[b]{0.3\textwidth}
\centering
\includegraphics[width=1.3\textwidth,trim={9cm 0.5cm 5cm 1cm},clip]{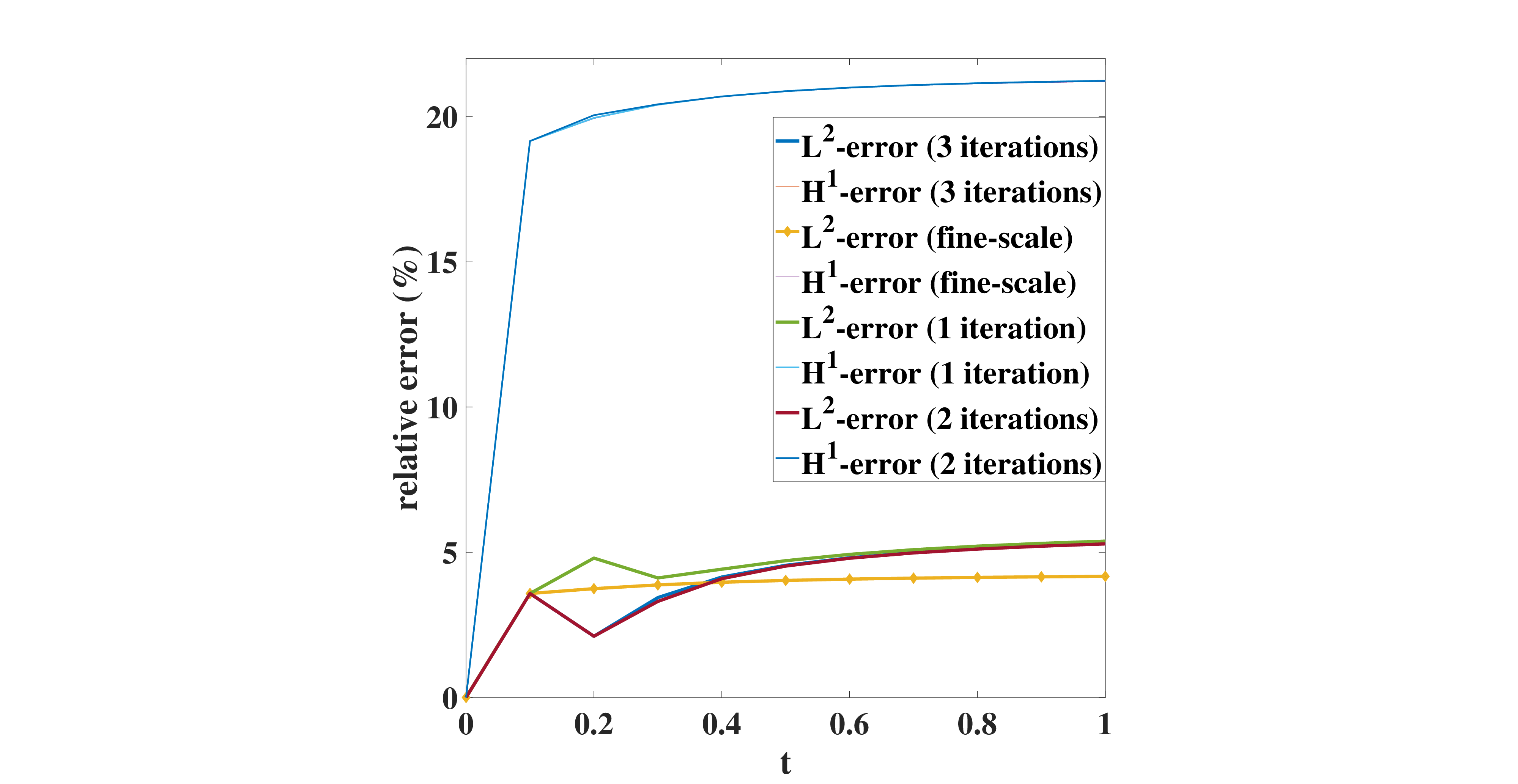}
\caption{$(\epsilon, N_{\text{exp}})=(1.6801,38)$}
\end{subfigure}
\begin{subfigure}[b]{0.3\textwidth}
\centering
\includegraphics[width=1.3\textwidth,trim={9cm 0.5cm 5cm 1cm},clip]{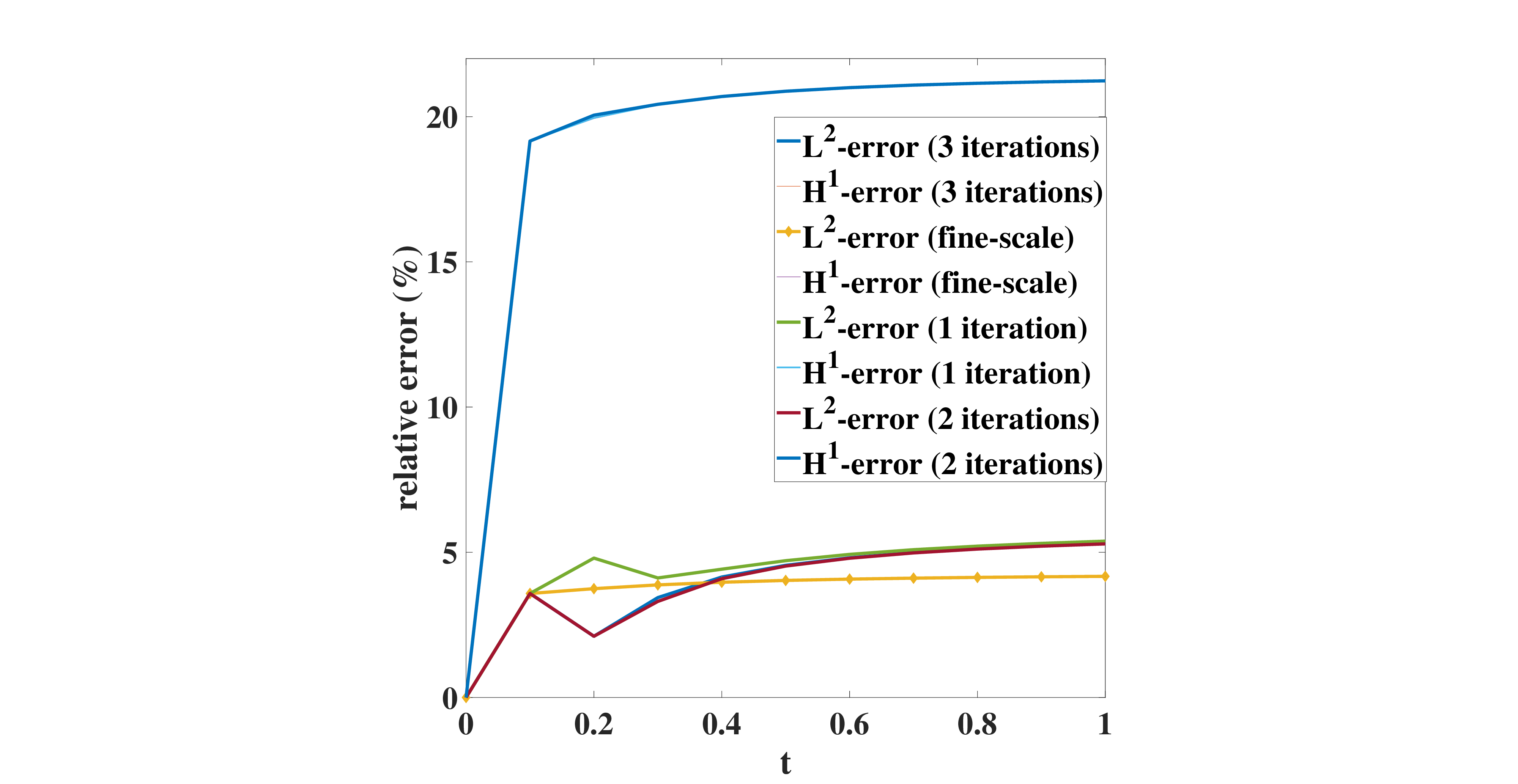}
\caption{$(\epsilon, N_{\text{exp}})=(19.629,31)$.}
\end{subfigure}
\caption{$\alpha=0.5$, $\epsilon^*=0.5$.}
\label{fig:0.5}
\end{figure}

\begin{figure} [H]
\centering
\begin{subfigure}[b]{0.3\textwidth}
\centering
\includegraphics[width=1.3\textwidth,trim={9cm 0.2cm 5cm .5cm},clip]{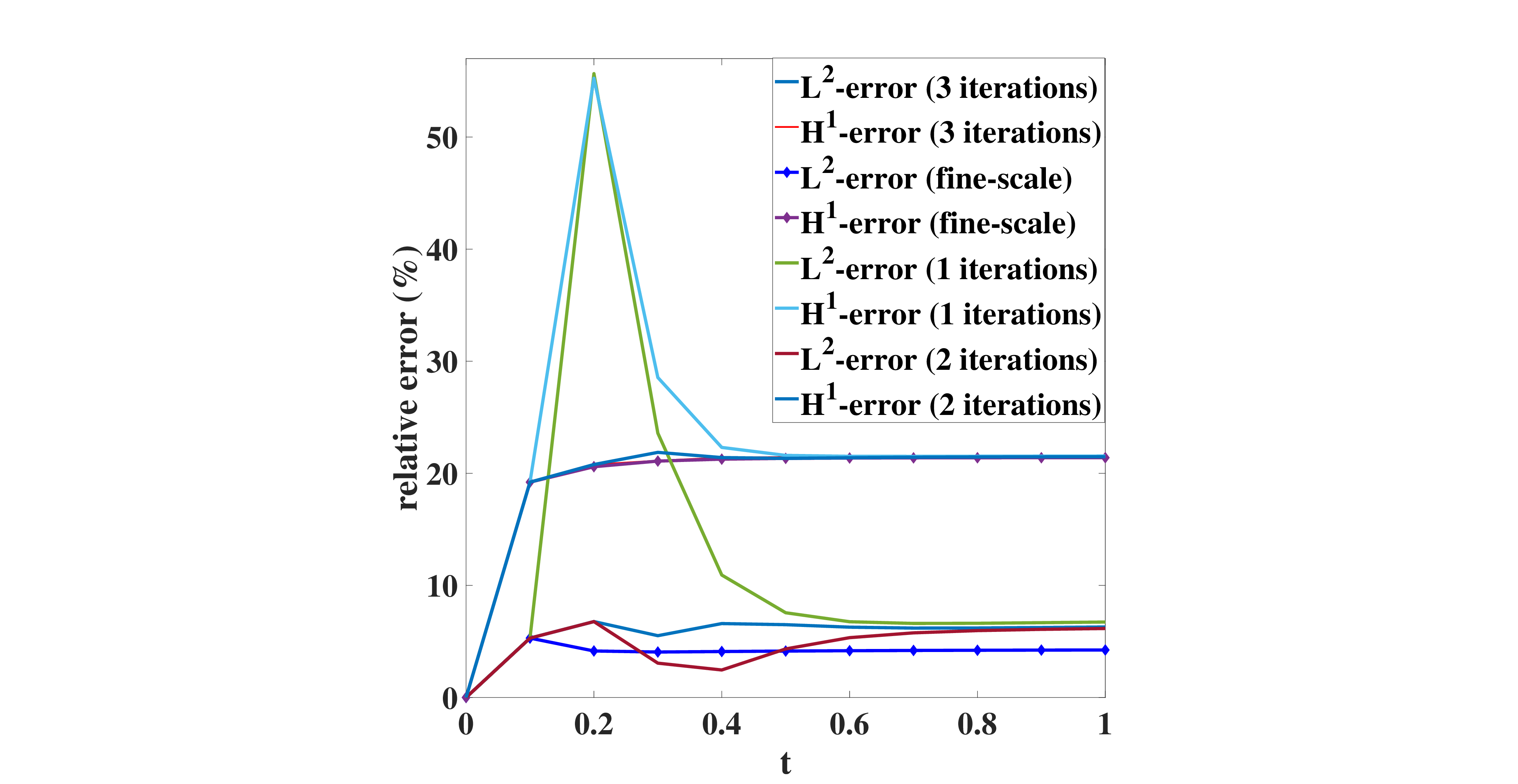}
\caption{$(\epsilon, N_{\text{exp}})=(0.0025,64)$}
\end{subfigure}
\begin{subfigure}[b]{0.3\textwidth}
\centering
\includegraphics[width=1.3\textwidth,trim={9cm 0.5cm 5cm 1cm},clip]{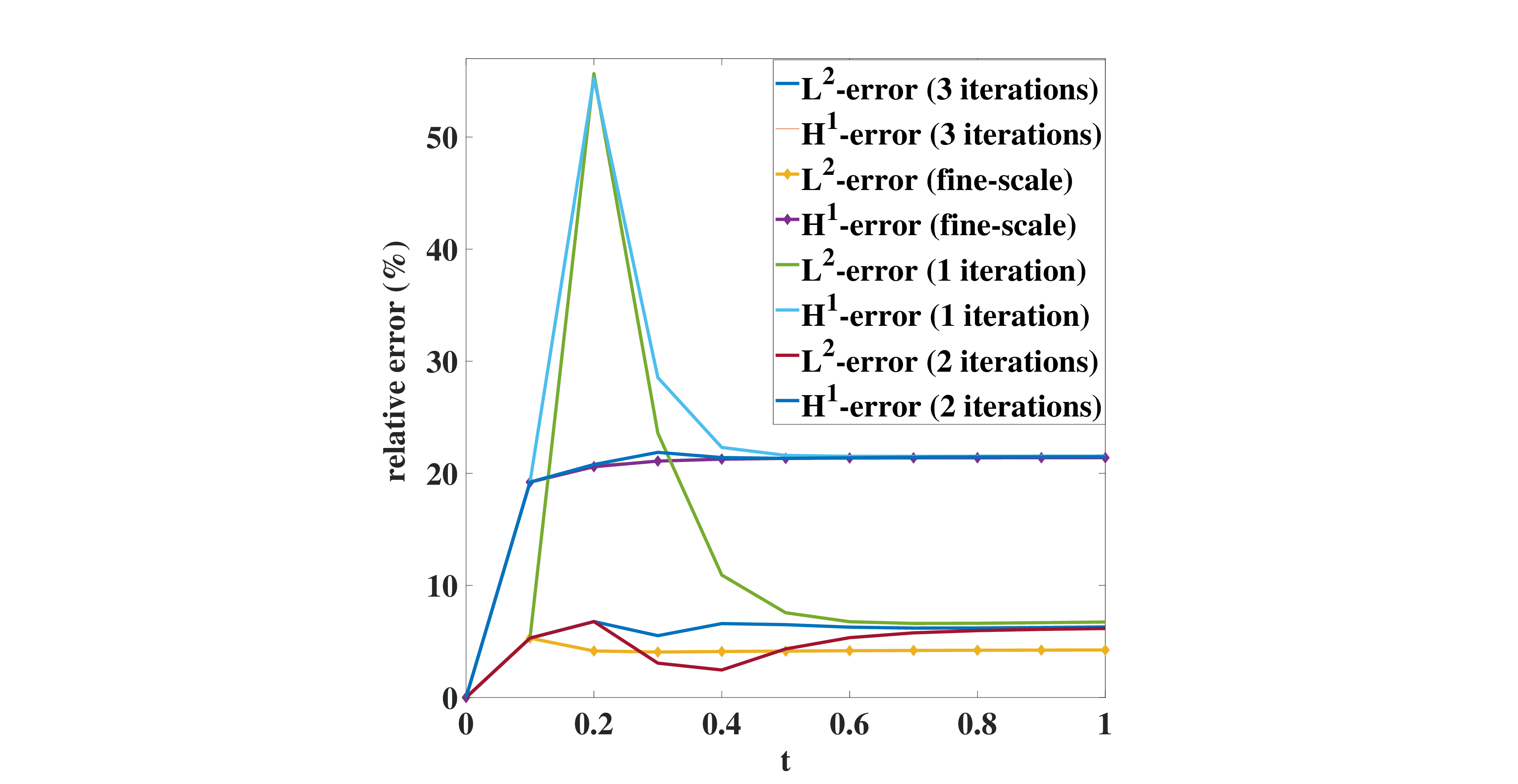}
\caption{$(\epsilon, N_{\text{exp}})=(2.6387,47)$ }
\end{subfigure}
\begin{subfigure}[b]{0.3\textwidth}
\centering
\includegraphics[width=1.3\textwidth,trim={9cm 0.5cm 5cm 1cm},clip]{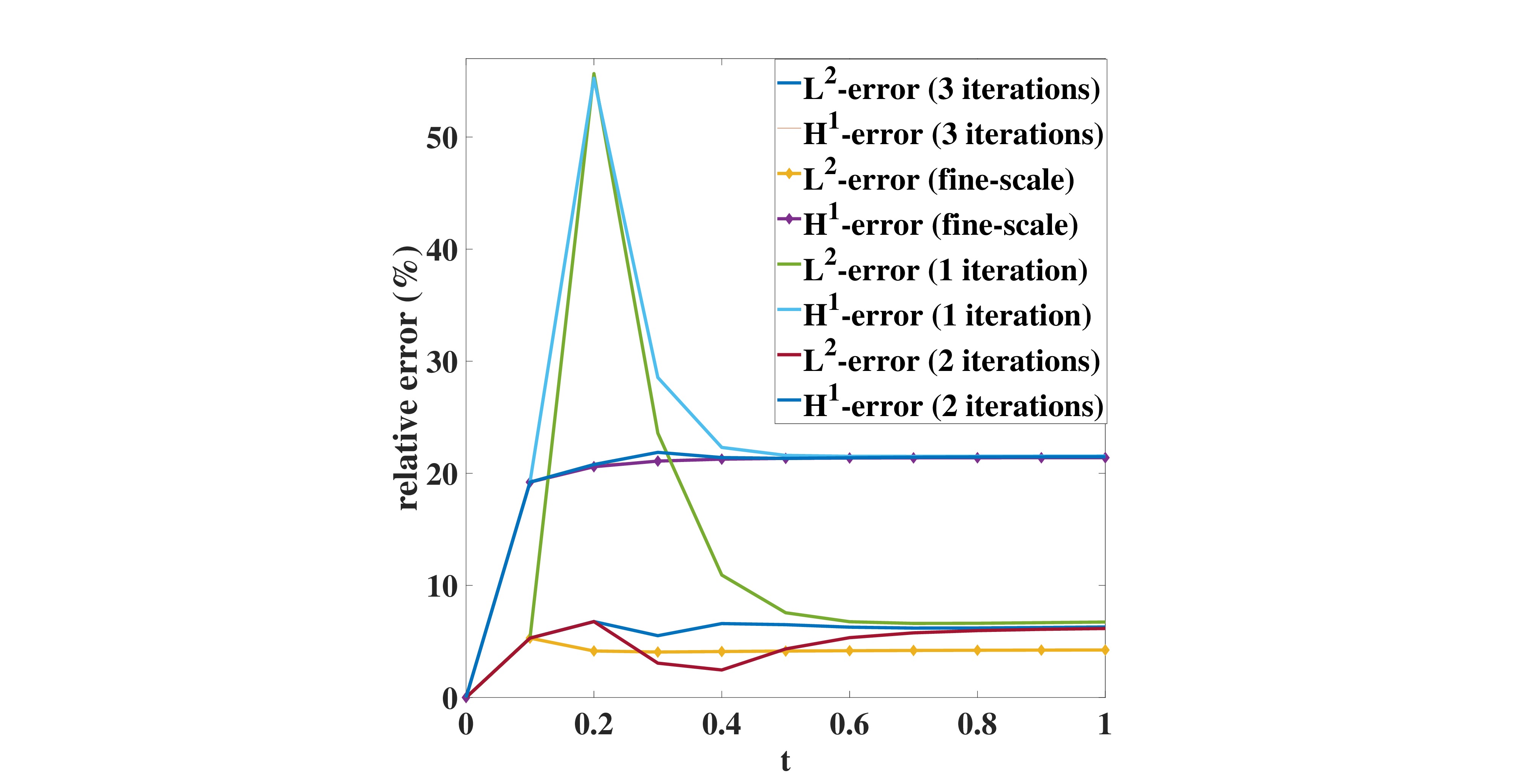}
\caption{$(\epsilon, N_{\text{exp}})=(41.9406,38)$.}
\end{subfigure}
\caption{$\alpha=0.9$, $\epsilon^*=0.5$.}
\label{fig:0.9}
\end{figure}

\begin{figure} [H]
\centering
\begin{subfigure}[b]{0.3\textwidth}
\centering
\includegraphics[width=1.3\textwidth,height=0.81\textwidth, trim={9cm 0.2cm 5cm .5cm},clip]{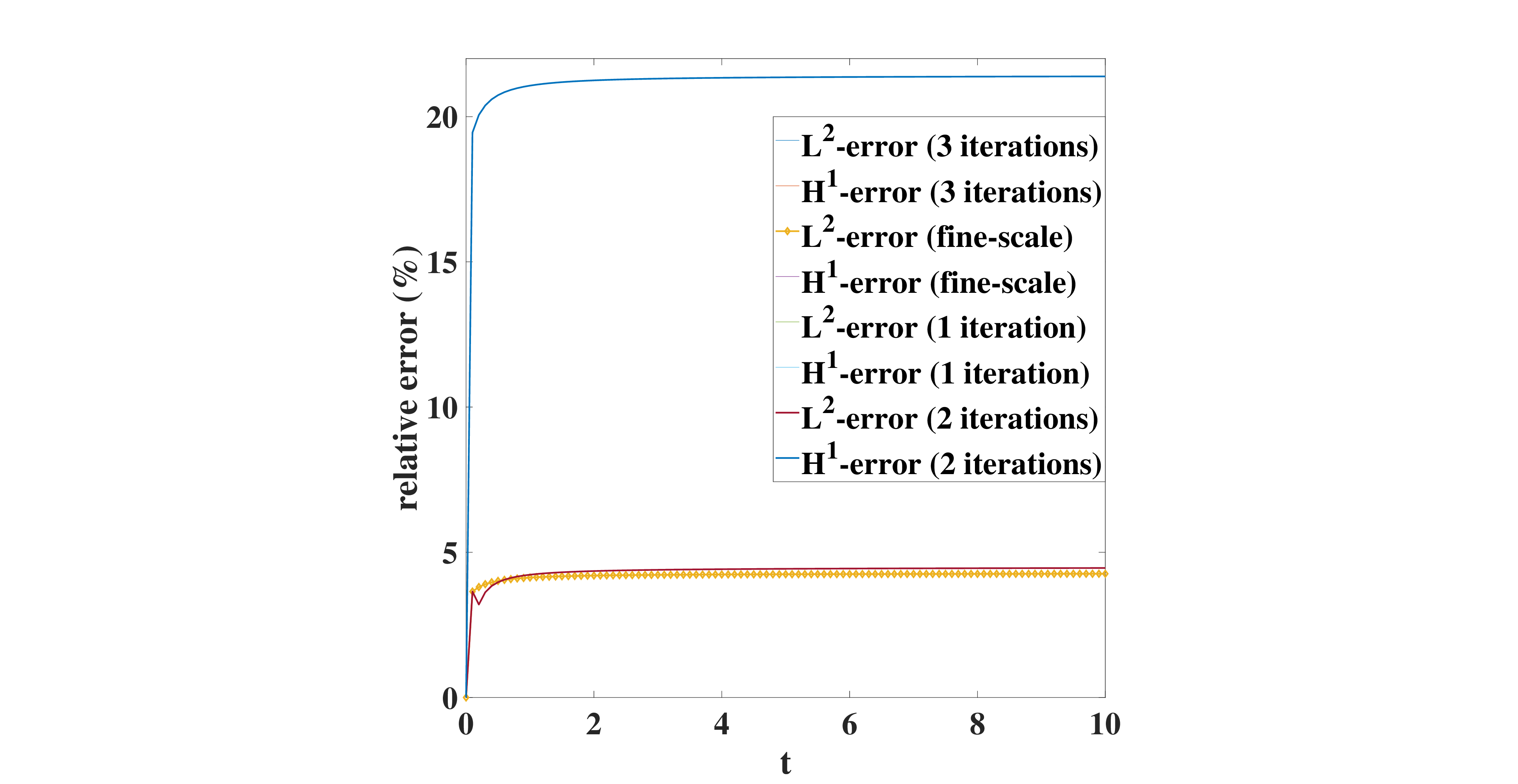}
\caption{$\alpha=0.1$}
\label{fig:largetime1}
\end{subfigure}
\begin{subfigure}[b]{0.3\textwidth}
\centering
\includegraphics[width=1.3\textwidth,trim={9cm 0.5cm 5cm 1cm},clip]{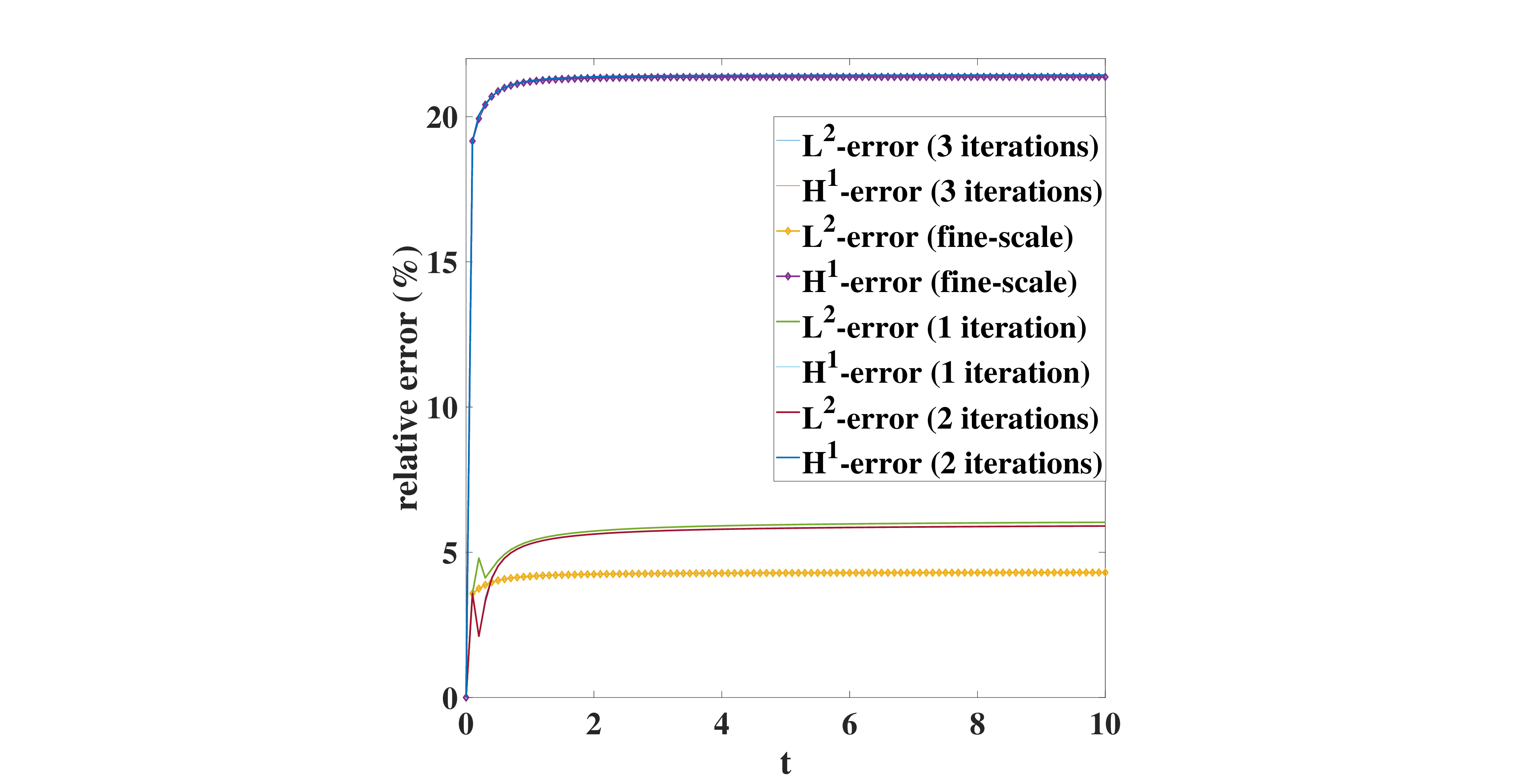}
\caption{$\alpha=0.5$} 
\label{fig:largetime2}
\end{subfigure}
\begin{subfigure}[b]{0.3\textwidth}
\centering
\includegraphics[width=1.3\textwidth,height=0.81\textwidth,trim={9cm 0.5cm 5cm 1cm},clip]{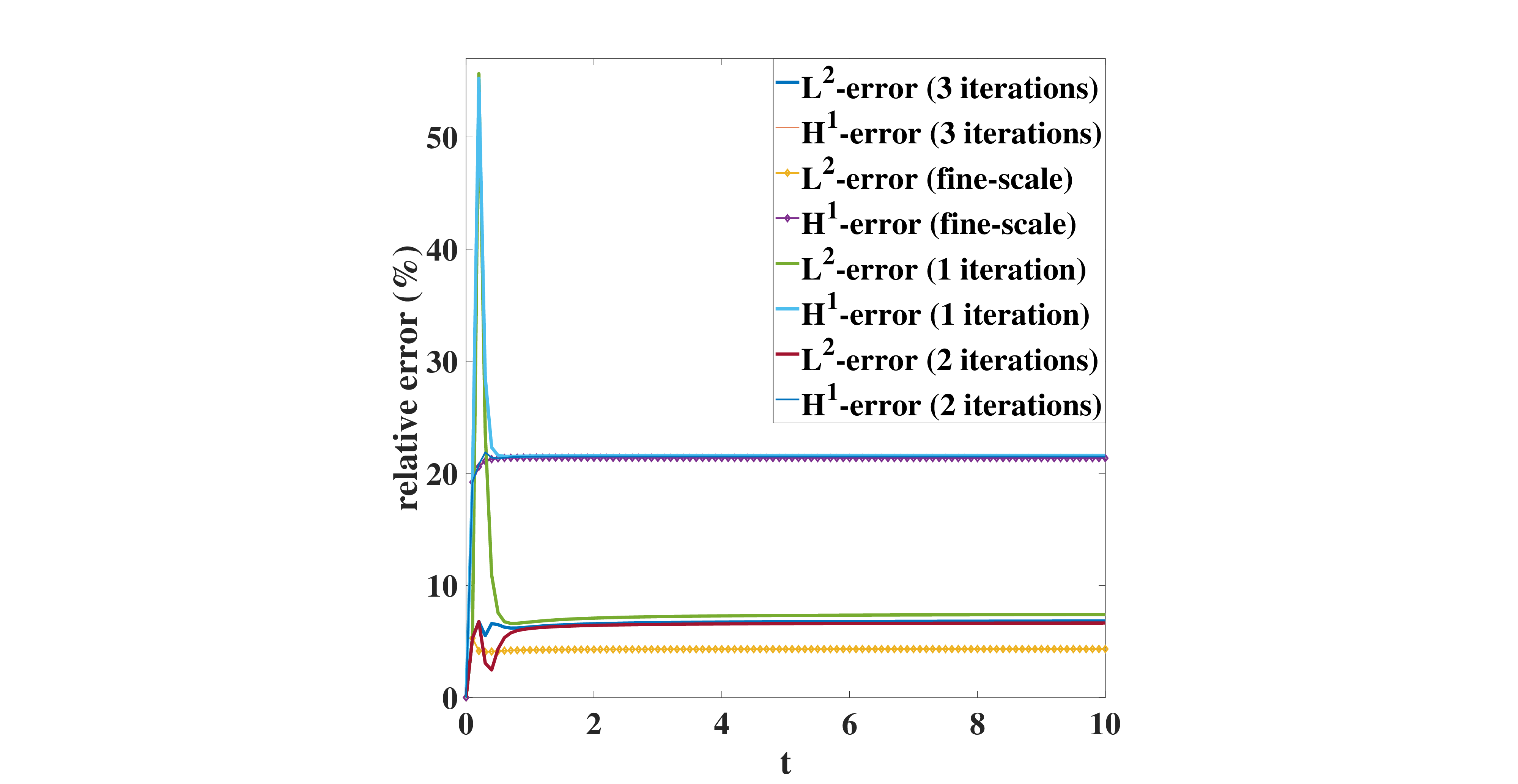}
\caption{$\alpha=0.9$} 
\label{fig:largetime3}
\end{subfigure}
\caption{$T=10$.}
\label{fig:largetime}
\end{figure}

\section{Conclusion}\label{sec:conclusion}
We propose in this paper a new efficient algorithm for time-fractional diffusion equations with heterogeneous coefficients. This algorithm is named as the Wavelet-based Edge Multiscale Parareal (WEMP) Algorithm, which can reduce computational cost and storage cost significantly. Moreover, it is suitable for long time simulation when multiple processors are available. We want to remark that the total number of terms in the exponential sums can be reduced hugely for a given accuracy requirement \eqref{eq:trunc-long} if an adaptive representation instead of a uniform representation is utilized. Consequently, this will lead to a more efficient algorithm for more challenging applications, for example, for much longer time simulation.
		
\bibliographystyle{abbrv}
\bibliography{reference}
\end{document}